\pgfplotsset{compat=newest}
\DeclareMathAlphabet{\mathdutchcal}{U}{dutchcal}{m}{n}
\SetMathAlphabet{\mathdutchcal}{bold}{U}{dutchcal}{b}{n}
\DeclareMathAlphabet{\mathdutchbcal}{U}{dutchcal}{b}{n}
\newtheorem{theorem}{Theorem}[section]
\newtheorem{lemma}[theorem]{Lemma}
\newtheorem{corollary}[theorem]{Corollary}
\theoremstyle{definition}
\newtheorem{definition}[theorem]{Definition}
\newtheorem{example}[theorem]{Example}
\theoremstyle{remark}
\newtheorem{remark}[theorem]{Remark}
\numberwithin{equation}{section}
\newcommand{\C}{\mathbb{C}}
\newcommand{\N}{\mathbb{N}}
\newcommand{\R}{\mathbb{R}}
\newcommand{\Z}{\mathbb{Z}}
\DeclarePairedDelimiter\abs{\lvert}{\rvert}
\DeclarePairedDelimiter\norm{\lVert}{\rVert}
\renewcommand{\epsilon}{\ensuremath\varepsilon}
\renewcommand{\phi}{\ensuremath{\varphi}}
\newcommand{\eps}{\epsilon}
\newcommand{\anyfield}{\mathbb{F}}
\NewDocumentCommand\field{e{_}}{\IfValueF{#1}{\anyfield}\IfNoValueF{#1}{\Z_{#1}}}
\newcommand{\Reals}{\R}
\newcommand{\Nats}{\N}
\newcommand{\into}{\hookrightarrow}
\newcommand{\trans}{\pitchfork}
\newcommand{\M}{\mathscr{M}}
\newcommand{\isom}{\cong}
\newcommand{\B}{\mathcal{B}}
\newcommand{\A}{\mathcal{A}}
\newcommand{\Chords}{\mathscr{C}}
\newcommand{\Lag}{\mathscr{L}}
\newcommand{\Tvert}{\Lambda}
\newcommand{\LG}{\mathcal{L}}
\newcommand{\longto}{\longrightarrow}
\newcommand{\floor}[1]{\left\lfloor #1 \right\rfloor}
\DeclareMathOperator{\taut}{taut}
\DeclareMathOperator{\im}{im}
\DeclareMathOperator{\supp}{supp}
\DeclareMathOperator{\Spec}{Spec}
\DeclareMathOperator{\inter}{int}
\DeclareMathOperator{\linspan}{span}
\DeclareMathOperator{\Hof}{Hofer}
\DeclareMathOperator{\Ham}{Ham}
\DeclareMathOperator{\bottle}{bot}
\DeclareMathOperator{\sign}{sign}
\DeclareMathOperator{\osc}{osc}
\definecolor{ceruleanblue}{rgb}{0.16, 0.32, 0.75}
\tikzset{    
    mypoint/.style={
        circle,
        draw,
        inner sep=.3mm
        },  
    whitepoint/.style={
        fill=white, 
        mypoint
        },  
    blackpoint/.style={
        fill=black, 
        mypoint
        },  
    textnode/.style={
        text height=2.5ex, 
        text depth=1ex
        },  
    }
\begin{document}


\title{Hofer geometry of $A_3$-configurations}


\date{\today}
\author{Adrian Dawid}
\address{Department of Pure Mathematics and Mathematical Statistics, University of Cambridge}
\email{apd55@cam.ac.uk}

\begin{abstract} 
    Let $L_0,L_1,L_2 \subset M$ be exact Lagrangian spheres in a Liouville domain $M$ with $2c_1(M)=0$.
    If $L_0,L_1,L_2$ form an $A_3$-configuration, we show that $\Lag(L_0)$ and $\Lag(L_2)$ endowed with the Hofer metric contain 
    quasi-isometric embeddings of $(\Reals^\infty, \norm{\cdot}_\infty)$, i.e.\ 
    infinite-dimensional quasi-flats.
    A corollary of the proof presented here establishes that $\Ham_c(M)$ itself contains an infinite-dimensional quasi-flat.
    We also show that for a Dehn twist $\tau: M \to M$ along $L_1$ the boundary depth of $CF(\tau^{2\ell}(L_0), L')$ is unbounded in $L' \in \Lag(L_2)$ for any $\ell \in \Nats_0$.
\end{abstract}
\maketitle
\section{Introduction and main results}
The large-scale properties of the Hofer metric have been the object of intensive study.
{
The Hofer diameter conjecture asserts that the Hofer diameter of the
Hamiltonian diffeomorphism group is infinite for all symplectic manifolds.
While this question is still open in general, much progress has been made.
A natural refinement of this question is 
the existence of quasi-flats for the Hofer metric.}
A (quasi)-flat is a (quasi)-isometric embedding of a normed vector space into any other metric space.
For the Hofer metric, this question has been extensively studied in the Hamiltonian case.
Numerous examples of finite and infinite-dimensional flats and quasi-flats have been constructed.
There is a natural relative or \textit{Lagrangian} version of the aforementioned classical question.
Namely, one is tempted to ask for which symplectic manifolds $M$ there exists a compact Lagrangian $L \subset M$
for which $\Lag(L) \coloneqq \{\phi(L) \vert \phi \in \Ham_c(M)\}$ contains a quasi-flat when endowed with the Lagrangian Hofer metric.
{
A quite general picture is known for the question of finite-dimensional quasi-flats when $M$ is a tame symplectically aspherical manifold.
This is due to \mbox{Zapolsky}, who showed in~\cite{zapolsky-2013} that quasi-flats\footnote{Zapolsky actually constructs proper \textit{flats} but with respect to an unconventional metric on $\Reals^d$. With respect to the Euclidean metric or $\norm{\cdot}_\infty$ they are quasi-flats.} of finite dimension exist for the Lagrangian Hofer metric
for specific configurations of weakly exact Lagrangians.
Namely, when one has $L,L_1,\dots,L_k \subset M$ with $L_1,\dots,L_k$ pairwise disjoint
such that $L$ intersects every $L_1,\dots,L_k$ in a single transverse point, there exists a $k$-dimensional quasi-flat in $\Lag(L)$.}

In this paper we will instead look at a different setting, namely that of a Liouville domain with an $A_3$-configuration of exact Lagrangian spheres.
This is a very large class of examples. Recall that a generic degree $d$ hypersurface $X \subset \C^n$ with $n \geq 2$ and $d \geq 3$
contains an $A_3$-configuration of Lagrangian spheres (see e.g.~\cite[Sec. III.20]{seidel-2008}).
In this setting we prove the following result:
\begin{theorem}\label{thm:main_1} 
    Let $(M, \omega = d\lambda)$ be a Liouville domain with $2c_1(M) = 0$ 
    and $L_0, L_1, L_2 \subset M$ exact Lagrangian spheres in an $A_3$-configuration. 
    Then there is a map 
    \begin{align*}
        \Phi: (\Reals^\infty, d_\infty) &\into (\Lag(L_2), d_{\Hof})
    \end{align*}
    which is a quasi-isometric embedding with quasi-isometry constant $2$, i.e.\ for any $v,w \in \Reals^{\infty}$
    \[\frac{1}{2}\cdot d_\infty(v,w) \leq d_{\Hof}(\Phi(v),\Phi(w)) \leq 2 \cdot d_{\infty}(v,w).\]
\end{theorem}
Due to the way this theorem is proven, we also obtain the existence of a quasi-flat in the group $\Ham_c(M)$.
As mentioned before, the class of symplectic manifolds which contain an $A_3$-configuration is quite large and contains many examples 
for which the metric properties of $(\Ham_c(M), d_{\Hof})$ have not yet been closely studied.
This includes the aforementioned generic degree $d$ affine hypersurfaces in $\C^n$ with $n \geq 2$ and $d \geq 3$.
Since it is of independent interest, we record this fact in the following corollary:
\begin{corollary}\label{cor:absolute_version}
    Let $(M, \omega = d\lambda)$ be a Liouville domain with $2c_1(M) = 0$ 
    that contains an $A_3$-configuration of exact Lagrangian spheres.
    Then $(\Ham_c(M), d_{\Hof})$ contains a quasi-isometric embedding of $(\Reals^\infty, d_\infty)$.
\end{corollary}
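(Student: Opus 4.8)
The plan is to deduce the corollary directly from Theorem \ref{thm:main_2} by lifting the quasi-flat from $\Lag(L_2)$ to $\Ham_c(M)$. The key point is that the embedding $\Phi$ in Theorem \ref{thm:main_2} is constructed by applying a family of compactly supported Hamiltonian diffeomorphisms to $L_2$; that is, there is a map $\widetilde\Phi : \Reals^\infty \to \Ham_c(M)$ with $\Phi(v) = \widetilde\Phi(v)(L_2)$ for every $v$. Since the orbit map $\psi \mapsto \psi(L_2)$ from $(\Ham_c(M), d_{\Hof})$ to $(\Lag(L_2), d_{\Hof})$ is $1$-Lipschitz (the Lagrangian Hofer distance between $\psi_0(L_2)$ and $\psi_1(L_2)$ is at most the Hofer distance between $\psi_0$ and $\psi_1$, by definition of the Lagrangian Hofer metric as an infimum over Hamiltonian isotopies moving one Lagrangian to the other), we immediately get the lower bound
\[
    d_{\Hof}\bigl(\widetilde\Phi(v), \widetilde\Phi(w)\bigr) \;\geq\; d_{\Hof}\bigl(\Phi(v), \Phi(w)\bigr) \;\geq\; \tfrac12 \, d_\infty(v,w).
\]

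For the upper bound one cannot simply invoke $1$-Lipschitzness in the other direction, so the real work is to check that the \emph{specific} family $\widetilde\Phi$ used to build $\Phi$ is itself Hofer-Lipschitz from above. Concretely, in the construction underlying Theorem \ref{thm:main_2} the point $\Phi(v)$ is obtained from $L_2$ by a time-one map generated by a Hamiltonian whose oscillation is controlled linearly by $\norm{v}_\infty$ (this is exactly what produces the upper bound $d_{\Hof}(\Phi(v),\Phi(w)) \leq 2 d_\infty(v,w)$ on the Lagrangian side). I would record that the Hamiltonians used there can be chosen compactly supported in $M$ and can be composed/interpolated so that $\widetilde\Phi(v)^{-1} \circ \widetilde\Phi(w)$ is generated by a compactly supported Hamiltonian of oscillation $\leq 2 d_\infty(v,w) + o(\norm{v-w})$; more cleanly, one arranges $\widetilde\Phi$ to be a group homomorphism (or affine) on the standard basis directions, e.g.\ $\widetilde\Phi(v) = \prod_i \phi^{v_i}_{H_i}$ for commuting autonomous Hamiltonians $H_i$ with disjoint supports and $\osc(H_i)$ normalized, in which case $d_{\Hof}(\widetilde\Phi(v),\widetilde\Phi(w)) \leq \max_i |v_i - w_i| \cdot \max_i \osc(H_i) = 2 d_\infty(v,w)$ directly. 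Combining the two bounds gives that $\widetilde\Phi$ is a quasi-isometric embedding with constant $2$.

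The main obstacle, and the only non-formal step, is ensuring the two features coexist: that the Hamiltonians realizing the lower bound (whose spectral/boundary-depth invariants force $d_{\Hof}(\Phi(v),\Phi(w))$ to be large — this is the hard analytic input, already done for Theorem \ref{thm:main_2}) can simultaneously be taken with disjoint supports and controlled oscillation so the \emph{upper} bound survives the passage from $\Lag(L_2)$ to $\Ham_c(M)$. In practice this should be automatic because the construction of $\Phi$ proceeds exactly by choosing such a commuting family with disjoint supports; so the corollary is really a matter of extracting $\widetilde\Phi$ from the proof of Theorem \ref{thm:main_2} rather than proving anything new. I would therefore phrase the proof as: "Inspecting the construction in the proof of Theorem \ref{thm:main_2}, the map $\Phi$ factors as $\Phi = \mathrm{ev}_{L_2} \circ \widetilde\Phi$ for a map $\widetilde\Phi: \Reals^\infty \to \Ham_c(M)$ built from commuting, disjointly-supported autonomous Hamiltonians; the upper bound holds for $\widetilde\Phi$ by the triangle inequality and the lower bound holds because $\mathrm{ev}_{L_2}$ is $1$-Lipschitz."
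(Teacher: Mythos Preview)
Your proposal is correct and matches the paper's argument essentially verbatim: the paper defines $\Psi = \Phi^{(\infty)} \circ \Sigma: \Reals^\infty \to \Ham_c(M)$, obtains the lower bound from the $1$-Lipschitz orbit map $\psi \mapsto \psi(L_2)$ together with the Lagrangian inequality already proved for Theorem~\ref{thm:main_2}, and obtains the upper bound by observing that the explicit Hamiltonian $H^{v\to w}$ (built from the disjointly supported pieces $H^{v\to w}_i$) in fact generates $\Psi(w)\Psi(v)^{-1}$, so that $\norm{\Psi(w)\Psi(v)^{-1}}_{\Hof} \leq \osc H^{v\to w} \leq 2\norm{v-w}_\infty$. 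Your remark that the crux is the coexistence of the spectral lower bound with the disjoint-support upper bound is exactly the point of the $\Sigma$-trick in Section~\ref{sec:infty_flats}; one small correction is that for disjointly supported $H_i$ one gets $\osc\bigl(\sum_i (v_i-w_i)H_i\bigr) \leq 2\max_i \osc\bigl((v_i-w_i)H_i\bigr)$ rather than $\max_i|v_i-w_i|\cdot\max_i\osc(H_i)$, since the global max and min may occur in different shells.
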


There is a rich body of prior results about these questions in different settings.
On the absolute side, Py showed in~\cite{py-2008} that the group of Hamiltonian diffeomorphisms of certain symplectic manifolds
contains Hofer flats of arbitrary finite dimension.
Later Usher showed that 
the Hamiltonian diffeomorphism group of a closed symplectic manifold contains an infinite-dimensional quasi-flat if 
the manifold admits a nontrivial Hamiltonian vector field all of whose contractible closed orbits are constant~\cite{usher-2013}.
Further, Usher showed in~\cite{usher-2014} that the compactly supported Hamiltonian diffeomorphism group of a cotangent bundle $T^*N$
admits an infinite-dimensional quasi-flat in some cases. In particular, $N$ has to be a closed Riemannian manifold which fulfills certain metric conditions.
For example any $S^n$ with $n \geq 3$ is allowed.
Recently, Polterovich and Shelukhin have constructed infinite-dimensional quasi-flats in certain low-dimensional settings~\cite{polterovich-shelukhin-2023}.

{
On the relative side, there is the aforementioned result of \mbox{Zapolsky} for finite-dimensional quasi-flats. 
For the case of infinite-dimensional flats,~\cite[Theorem 1.5]{zapolsky-2013} asserts the existence of an infinite-dimensional quasi-flat for weakly exact Lagrangians that fiber over $S^1$ in a tame symplectic manifold.
Another class of infinite-dimensional quasi-flats are constructed in~\cite{usher-2013}. 
These come from Lagrangians which are isotopic to the diagonal in the product of certain symplectic manifolds.
Here some dynamic properties have to be fulfilled by the manifold.
}

For the special case of cotangent bundles much more is known.
The related question of the finiteness of the Hofer diameter is known in full generality.
This is due to Gong, who recently showed that the Hofer diameter of the space of Lagrangians which are Hamiltonian isotopic to a cotangent fiber in the disk cotangent bundle $D^*N$ 
is infinite for any closed Riemannian manifold $N$~\cite[Corollary 2]{gong-2023}.
This result is obtained as a corollary of the respective statement for the spectral {metric}, which is covered by the main theorem in~\cite{gong-2023}.
Recently, an infinite-dimensional quasi-flat for the spectral metric has been constructed by Feng and Zhang under certain dynamical conditions~\cite{feng-zhang-2024}.
For the question of Hofer quasi-flats, 
{
Usher showed in~\cite{usher-2014} that there is an infinite-dimensional quasi-flat in the space of Lagrangians which are Hamiltonian isotopic to a cotangent fiber in $T^*N$ through isotopies with compact support arbitrarily close to the zero section. 
In particular, any $S^n$ with $n \geq 3$ is included in the result.}

In this paper, we follow an approach specifically tailored towards spheres $L_0,L_1,L_2 \subset M$ in an $A_3$-configuration. 
Locally, a neighborhood of an $A_3$-configuration looks like a sphere cotangent bundle with two preferred fibers to which handles are attached.
Recall, that three Lagrangian spheres $L_0,L_1,L_2 \subset M$ in a Liouville domain $M$ are in an $A_3$-configuration if $L_0 \cap L_1$ and $L_1 \cap L_2$ contain each a single transverse point and $L_0 \cap L_2 =\emptyset$.
We will use carefully constructed models of a Dehn twist along $L_1$ to mimic the behavior of wrapping in the cotangent bundle case.
For any $d \in \Nats$, an $\Reals^d$-parametrized family of Hamiltonian diffeomorphisms $\phi_v$ is then constructed.
Using spectral invariants associated to classes in $HF(\tau^{2k}(L_0), \phi_v(L_2))$ for large $k \in \Nats$, we show that 
$\Lag(L_2)$ contains Hofer quasi-flats of dimension $d$.
This first step of the proof is easier to handle geometrically. 
Then, by a slight variation of the argument we construct infinite-dimensional quasi-flats.

The method of our proof also applies in the case of $A_2$-plumbings, i.e.\ the plumbing of two copies of $T^*S^n$. We denote this by $A^n_2$ as it is the $A_2$-Milnor fiber which can also be described as the affine variety
\[A_2^n = \{z_0^2 + \cdots + z_{n-1}^2 + z_n^3 = 1\} \subset \C^{n+1}\]
with the restriction of the standard symplectic form $\omega_{\C^{n+1}} = \frac{i}{2\pi} (dz_0 \wedge d\bar{z}_0 + \cdots + dz_n \wedge d\bar{z}_n)$.
{
The zero section of either cotangent bundle defines a Lagrangian sphere $S^n \into A^n_2$, which is identified with a cotangent fiber of the other cotangent bundle near the gluing point.
We denote the resulting Lagrangian spheres by $L^{(1)}_{S^n}$ and $L^{(2)}_{S^n}$ respectively. The result from above can now be adapted to this setting as follows:
\begin{corollary}\label{cor:main_1}
    Let $n \in \Nats$ be arbitrary. Then with the notation from above there is a map 
    \begin{align*}
        \Phi_i: (\Reals^\infty, d_\infty) &\into (\Lag(L_{S^n}^{(i)
        } ), d_{\Hof}) 
    \end{align*}
    for $i \in \{1,2\}$
    which is a quasi-isometric embedding with quasi-isometry constant $2$, i.e.\ for any $v,w \in \Reals^\infty$
    \[\frac{1}{2} \cdot d_\infty(v,w) \leq d_{\Hof}(\Phi_i(v),\Phi_i(w)) \leq 2 \cdot d_{\infty}(v,w).\]
\end{corollary}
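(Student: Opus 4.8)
The plan is to reduce Corollary \ref{cor:main_1} to Theorem \ref{thm:main_2} by exhibiting, inside $A_2^n$, a Weinstein neighborhood that contains (or can be completed to) a Liouville domain carrying an $A_3$-configuration of exact Lagrangian spheres for which the middle fiber-role is played by a copy of $S^n$ that is Hamiltonian isotopic to the zero-section $S^n \subset A_2^n$. Concretely, recall that $A_2^n$ is the plumbing $T^*S^n \#_{\mathrm{plumb}} T^*S^n$; write $L_1^{A_2}, L_2^{A_2}$ for the two zero-sections, which meet transversally in a single point $p$, and near $p$ the manifold $A_2^n$ looks like $T^*S^n$ with $L_1^{A_2}$ as a cotangent fiber and $L_2^{A_2}$ as the zero-section. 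Inside a Darboux-type chart around $p$ one produces a third Lagrangian sphere $L_0^{A_2}$ — obtained by taking a second cotangent fiber of the $T^*S^n$ neighborhood of $L_2^{A_2}$, pushed off so that it is disjoint from $L_1^{A_2}$ but still meets $L_2^{A_2}$ transversally once — so that $L_0^{A_2}, L_1^{A_2}, L_2^{A_2}$ form an $A_3$-configuration in a Liouville subdomain of the completion of $A_2^n$. The abstract input here is exactly the local model invoked in the introduction: "locally, a neighborhood of an $A_3$-configuration looks like a sphere cotangent bundle with two preferred fibers to which handles are attached," run in reverse — an $A_2$-plumbing together with one extra fiber is an $A_3$-configuration.

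First I would make this configuration precise: choose the Liouville structure on the $A_2^n$-Milnor fiber coming from its description as an affine variety, verify $2c_1(A_2^n)=0$ (it is the Milnor fiber of an isolated hypersurface singularity, hence has trivial canonical bundle, so in fact $c_1 = 0$), and verify that the three spheres constructed above are exact Lagrangian — $L_1^{A_2}, L_2^{A_2}$ are the standard exact zero-sections, and $L_0^{A_2}$ is a small Hamiltonian perturbation of an exact cotangent fiber, hence exact. Then I would apply Theorem \ref{thm:main_2} to the triple $(L_0^{A_2}, L_1^{A_2}, L_2^{A_2})$ inside this Liouville domain. Theorem \ref{thm:main_2} produces a quasi-isometric embedding $\Phi\colon (\Reals^\infty, d_\infty) \into (\Lag(L_2^{A_2}), d_{\Hof})$ with quasi-isometry constant $2$. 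Since $L_2^{A_2}$ is Hamiltonian isotopic to the zero-section $S^n \subset A_2^n$ (indeed it is one of the two zero-sections, and the two zero-sections of an $A_2$-plumbing are related by the Dehn twist along the other one, hence Hamiltonian isotopic), the spaces $\Lag(L_2^{A_2})$ and $\Lag(S^n \subset A_2^n)$ coincide as subsets of Hamiltonian orbits, and conjugation by a fixed Hamiltonian diffeomorphism is an isometry for $d_{\Hof}$; composing $\Phi$ with this isometry yields the desired embedding into $(\Lag(S^n \subset A_2^n), d_{\Hof})$ with the same quasi-isometry constant $2$.

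Two points require care. The first — and I expect the main obstacle — is that $A_2^n$ is a Liouville domain with boundary, so the "$\Reals^\infty$-parametrized family of Hamiltonian diffeomorphisms $\phi_v$" built in the proof of Theorem \ref{thm:main_2} must be realized by compactly supported Hamiltonians and the Floer-theoretic spectral invariants (the boundary-depth / spectral numbers of $HF(\tau^{2k}(L_0^{A_2}), \phi_v(L_2^{A_2}))$) must be well-defined and computable in the compact setting; here one needs that the Dehn twist $\tau$ along $L_1^{A_2}$, its iterates $\tau^{2k}$, and the perturbations $\phi_v$ are all supported in the interior, which is automatic because the whole $A_3$-configuration sits in the interior, and the exactness hypotheses guarantee the relevant Floer homologies are defined with the usual action filtration. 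The second, minor, point is that one must confirm the construction of $L_0^{A_2}$ genuinely lies within $A_2^n$ rather than only in its completion; shrinking the Weinstein neighborhood of $p$ and rescaling the Liouville flow handles this, at the cost of replacing $A_2^n$ by a Liouville-isomorphic subdomain, which does not affect $\Lag(S^n)$ up to the isometry discussed above. With these checks in place the corollary follows immediately; the only genuinely new content beyond Theorem \ref{thm:main_2} is the identification of an $A_2$-plumbing-plus-one-fiber with an $A_3$-configuration, and of the two zero-sections with one another.
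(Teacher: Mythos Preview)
There is a genuine gap: the Lagrangian $L_0^{A_2}$ you construct is not a sphere. A cotangent fiber of the Weinstein neighborhood $D^*L_2^{A_2}$ is a disk (properly embedded, with boundary on $\partial A_2^n$), not a closed manifold; the only fiber that closes up to a sphere is the one over the plumbing point $p$, and that one is $L_1^{A_2}$ itself. Pushing it off to a nearby fiber destroys exactly this closing-up. Hence $(L_0^{A_2}, L_1^{A_2}, L_2^{A_2})$ is not an $A_3$-configuration of Lagrangian spheres and Theorem~\ref{thm:main_2} cannot be invoked as a black box. In fact for $n$ even no such configuration can exist in $A_2^n$ at all: three spheres in an $A_3$-configuration have a nondegenerate $3\times 3$ intersection matrix, while $H_n(A_2^n;\Z)\cong\Z^2$. (A secondary error: your claim that the two zero-sections are Hamiltonian isotopic via a Dehn twist is false --- Dehn twists are not Hamiltonian, and the two zero-sections represent linearly independent homology classes, so they are not Hamiltonian isotopic by any means. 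This does not affect the corollary, since the symmetry of the plumbing makes the statement for either zero-section equivalent.)

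The paper does not attempt to manufacture a third sphere. It observes instead that the entire argument of Sections~\ref{sec:setup}--\ref{sec:infty_flats} takes place inside a neighborhood $U\cong D^*S^n$ of the middle sphere $L_1$ and uses of $L_0,L_2$ only that they restrict to two distinct non-antipodal fibers in $U$; whether $L_0$ is globally a sphere or merely a properly embedded disk is immaterial to the index and action computations and to the spectral-invariant estimates. One therefore takes $L_1$ to be one zero-section of $A_2^n$ (the zero-section of the $D^*S^n$ piece), $L_2$ to be the other zero-section (the fiber over the plumbing point, closed up by the handle), and $L_0$ to be an ordinary cotangent fiber over some nearby point --- a disk. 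The Hamiltonians $\phi_v$, the model Dehn twists $\tau_i$, the Floer complexes $CF(\tau_i^{2k}(L_0),\phi_v(L_2))$, and the bounds of Lemma~\ref{lem:lower_bound} and Section~\ref{sec:infty_flats} then go through verbatim, producing the quasi-flat directly in $\Lag(L_2)=\Lag(S^n\subset A_2^n)$.
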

For the sake of completeness we note that our method also covers the case of {sphere} cotangent bundles.
Here we recover part of Usher's result in~\cite{usher-2014}, namely the case for sphere cotangent bundles when $n \geq 3$.
\begin{corollary}[Usher 2014]\label{cor:main_2}
    Let $n \geq 2$ be arbitrary. Let $F \subset T^*S^n$ be any fiber.
    Then there exists a map
    \begin{align*}
        \Phi: (\Reals^\infty, d_\infty) &\into (\Lag_c(F), d_{\Hof})
    \end{align*}
    which is a quasi-isometric embedding with quasi-isometry constant $2$, i.e.\ for any $v,w \in \Reals^\infty$
    \[\frac{1}{2} \cdot d_\infty(v,w) \leq d_{\Hof}(\Phi(v),\Phi(w)) \leq 2 \cdot d_{\infty}(v,w).\]
    Here $\Lag_c(F) = \{\phi(F) \mid \phi \in \Ham_c(T^*S^n)\}$.
\end{corollary}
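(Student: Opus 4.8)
The plan is to deduce the corollary by running the proof of Theorem~\ref{thm:main_2} (equivalently, of Corollary~\ref{cor:main_1}) almost verbatim, with the $A_3$-configuration replaced by the natural ``$A_3$-configuration at infinity'' living inside $T^*S^n$. Since any two fibers lie in a single $\Ham_c(T^*S^n)$-orbit, we may assume $F = F_p$ is the fiber over a chosen point $p \in S^n$; pick a second point $q \neq p$ and set $L_0 = F_q$, $L_1 = S^n$ (the zero section), $L_2 = F_p = F$. Then $L_1$ meets $L_0$ transversally in the single point $q$, meets $L_2$ transversally in the single point $p$, and $L_0 \cap L_2 = \emptyset$: this is exactly the intersection pattern of an $A_3$-configuration, the only difference being that $L_0, L_2$ are now the non-compact exact Lagrangians $\Reals^n$ rather than spheres. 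The role played in Theorem~\ref{thm:main_2} by the Dehn twist $\tau$ along $L_1$ is now played by the generalized Dehn twist along the zero section $S^n \subset T^*S^n$, whose standard model is a compactly supported damping of the normalized cogeodesic flow; thus $\tau^{2k}$ literally realises the $2k$-fold wrapping around the zero section that the $A_3$-argument had to simulate by hand.

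The only genuinely new point compared to Theorem~\ref{thm:main_2} is that $L_0$ and $L_2$ are non-compact, so that one must work with $\Lag_c(F)$, with compactly supported Hamiltonian diffeomorphisms, and with Lagrangian Floer homology for exact Lagrangians that are conical at infinity. I would dispatch this as follows: $T^*S^n$ is a Weinstein manifold with $2c_1 = 0$, the fibers are exact and cylindrical outside a disk bundle, and — after conjugating by a Liouville rescaling if necessary — the supports of all Hamiltonians appearing in the construction, together with all the wrapped copies $\tau^{2k}(F_q)$ and $\phi_v(F_p)$ that enter the Floer-theoretic estimates, can be confined to a fixed compact disk bundle $D^*S^n$. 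Hence the complexes $HF(\tau^{2k}(F_q), \phi_v(F_p))$ and their spectral invariants (and boundary depths) are defined exactly as in the compact case, and the $\Reals^\infty$-parametrised family $\phi_v \in \Ham_c(T^*S^n)$ is constructed word for word as in the proof of Theorem~\ref{thm:main_2}, with Hamiltonians supported near the wrapped copies of $F_p$.

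With this dictionary in place the Floer-theoretic input is unchanged: the Dehn-twist exact triangle of Seidel for the twist along $S^n$, together with $HF(F_q, F_p) = 0$ coming from disjointness, yields the same inductive description of $HF(\tau^{2k}(F_q), F_p)$ in terms of shifted copies of the one-dimensional group $HF(S^n, F_p)$, and the spectral-invariant estimates that force the quasi-flat carry over verbatim, the ``wrapping'' now being honest cogeodesic wrapping. This produces the desired quasi-isometric embedding $\Phi : (\Reals^\infty, d_\infty) \into (\Lag_c(F), d_{\Hof})$ with quasi-isometry constant $2$. The main — and essentially only — obstacle I anticipate is the book-keeping at infinity described above, namely verifying that the non-compactness of the fibers never disturbs the definition of the Floer package or the Hofer-distance bounds; everything else is a transcription of the proof of Theorem~\ref{thm:main_2}. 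Alternatively, one could try to deduce the statement directly from Corollary~\ref{cor:main_1} via the embedding of $D^*S^n$ into $A_2^n$ as a Weinstein neighborhood of one zero section, under which $F$ is identified with a disk in the other zero section, but the direct argument above seems cleaner and avoids having to track whether the quasi-flat of Corollary~\ref{cor:main_1} is realised by Hamiltonians supported in that neighborhood.
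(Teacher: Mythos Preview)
Your proposal is correct and follows the paper's own approach: the entire machinery of Sections~\ref{sec:setup}--\ref{sec:infty_flats} is explicitly set up to cover the local model $M = D^*S^n$ with $L_0 = T_x^*S^n$, $L_1 = 0$, $L_2 = T_y^*S^n$ (see the opening paragraphs of Sections~\ref{sec:index_comp} and~\ref{sec:flats}), so Corollary~\ref{cor:main_2} follows by simply running those arguments in that model. Two small remarks: the paper requires $x$ and $y$ to be non-antipodal (not merely distinct), and the Floer computation of $HF(\tau^{2k}(L_0),L_2)$ is done via direct index calculations and~\cite{frauenfelder-schlenk-2005} rather than via Seidel's exact triangle, so the ``book-keeping at infinity'' you anticipate is absorbed by working in the Liouville domain $D^*S^n$ with Lagrangians meeting the boundary, exactly as the preliminaries in Section~\ref{sec:prelims} allow.
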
}

Lastly, we can obtain a result on the boundary depth with the same methods.
The boundary depth $\beta$ of a Floer complex was introduced by Usher in~\cite{usher-2013}.
From a persistence homology perspective, 
it measures the length of the longest finite bar in a barcode. For Floer homology, this can be thought of as the highest energy of a $J$-holomorpic strip that contributes non-trivially to the Floer differential.
It is closely related to the Hofer norm and the spectral norm but can also be studied independently.
The unboundedness of the boundary depth for two fibers in some cotangent bundles is for example shown in~\cite{usher-2014}.
We can show the following analogous statement using the results which lead of up Theorem~\ref{thm:main_1}.
\begin{theorem}\label{thm:main_3} 
    Let $(M, \omega = d\lambda)$ be a Liouville domain with $2c_1(M) = 0$ 
    and $L_0, L_1, L_2 \subset M$ exact Lagrangian spheres in an $A_3$-configuration. 
    The boundary depth $\beta(L_0,L)$ is unbounded in $L \in \Lag(L_2)$.
    Let $\tau: M \to M$ be a Dehn twist along $L_1$. Then for any $\ell \in \Nats_0$, 
    the boundary depth $\beta(\tau^{2\ell}(L_0), L)$ is unbounded in $L \in \Lag(L_2)$.
\end{theorem}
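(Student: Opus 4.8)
The plan is to deduce Theorem~\ref{thm:main_3} from the same mechanism that powers Theorem~\ref{thm:main_2}, since the spectral invariants on $HF(\tau^\ell(L_0), \phi_v(L_2))$ that yield the quasi-flat are built out of, and controlled by, the boundary depth of the associated Floer complexes. First I would recall the $\Reals^d$-parametrized family $\phi_v \in \Ham_c(M)$ constructed in the proof of the finite-dimensional case, together with the explicit $J$-holomorphic strip count that shows the spread of action values of the distinguished generators of $CF(\tau^{2k}(L_0), \phi_v(L_2))$ grows linearly in (a suitable coordinate of) $v$ and in $k$. The key observation is that any finite bar in the barcode of this complex whose endpoints are the relevant generators has length bounded below by that spread, so $\beta(\tau^{2k}(L_0), \phi_v(L_2)) \geq c \cdot \norm{v}$ for a uniform constant $c > 0$; letting $\norm{v} \to \infty$ along the family (with $k$ fixed) gives unboundedness of $\beta(\tau^{2k}(L_0), L)$ on $L \in \Lag(L_2)$. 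Reducing from general even $\ell = 2k$ to $\ell = 2$ (or vice versa) is handled by noting that $\tau^{2}$ and $\tau^{2k}$ differ by a Hamiltonian diffeomorphism composed with a symplectomorphism isotopic to the identity through symplectomorphisms fixing the relevant boundary data, so the boundary depth is unchanged up to reparametrizing the family; alternatively one re-runs the strip count for each $\ell$ directly, which is no harder.

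The second step is to handle the $\ell = 0$ case, i.e.\ the statement that $\beta(L_0, L)$ itself is unbounded on $L \in \Lag(L_2)$. Since $L_0 \cap L_2 = \emptyset$ in the $A_3$-configuration, one cannot argue directly with $HF(L_0, L_2)$; instead I would use the naturality/invariance of the boundary depth under the Dehn twist. Concretely, $HF(L_0, \tau^{-\ell}(L))$ is canonically isomorphic (as a filtered complex, up to a global shift by the action of $\tau$) to $HF(\tau^\ell(L_0), L)$, so the unboundedness for $\ell \in 2\Nats$ transfers to unboundedness of $\beta(L_0, L')$ over $L' = \tau^{-\ell}(L) \in \Lag(L_2)$, because $\tau$ preserves $\Lag(L_2)$ up to Hamiltonian isotopy (here one uses that $\tau(L_2)$ is Hamiltonian isotopic to a known Lagrangian in $\Lag(L_2)$, or one simply enlarges the target family accordingly). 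This is the same bookkeeping used to pass between $L_0$ and $\tau^{2k}(L_0)$ in the quasi-flat construction.

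The main technical point — and the part I expect to require the most care — is verifying that the lower bound on action-spread really does translate into a lower bound on \emph{boundary depth} rather than merely on the spectral norm or the Hofer norm. The subtlety is that the boundary depth sees the \emph{longest finite bar}, and one must exhibit a genuine nontrivial component of the Floer differential, coming from a $J$-holomorphic strip, whose input and output generators have action difference at least $c\norm{v}$, while simultaneously ruling out that this bar is actually infinite (i.e.\ that the cycle survives to homology). For the $A_3$-configuration this is exactly where the rigidity of Dehn twists and the computation of $HF(\tau^\ell(L_0), L_2)$ for a model $L$ enters: one identifies the barcode explicitly enough to pin down which generators pair up into finite bars. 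I would lift the relevant computation verbatim from the proof of Theorem~\ref{thm:main_2}, where the same strips are already analyzed to produce the spectral-invariant estimates; the only additional input is the elementary persistence-module fact that a filtered chain complex with a differential component of filtration jump $\geq s$ between two generators, neither of which is a boundary below level $s$, has boundary depth $\geq s$. Everything else is routine, so I would keep the argument short and point to the earlier sections for the geometric estimates.
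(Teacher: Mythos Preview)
Your proposal has a fundamental gap: it conflates spectral invariants with boundary depth. The distinguished generators $\xi_i$ from Section~\ref{sec:spectral} (the ones sitting in $\supp \tau_i^{2}$) represent the nonzero homology classes $\alpha_i \in HF(\tau_i^{2k}(L_0), L_2)$; in barcode language these are endpoints of the \emph{infinite} bars. Their action spread controls differences of spectral invariants, but says nothing about the length of any \emph{finite} bar, which is what $\beta$ measures. So the sentence ``any finite bar \dots whose endpoints are the relevant generators has length bounded below by that spread'' is vacuous for those generators. Nothing in the proof of Theorem~\ref{thm:main_2} computes a Floer differential or identifies a finite bar, so there is nothing to ``lift verbatim''. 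Two further claims are false: $\tau^{2}$ and $\tau^{2k}$ do \emph{not} differ by a Hamiltonian diffeomorphism or by anything symplectically isotopic to the identity (iterated squared Dehn twists act nontrivially on $HF$), and $\tau(L_2)$ is \emph{not} in $\Lag(L_2)$ (since $L_1 \cap L_2$ is a single point, $\tau(L_2)$ is the surgery of $L_1$ and $L_2$), so your reduction for $\ell = 0$ collapses.

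The paper's argument is different and direct. It takes $d = 1$, fixes $\ell \in 2\Nats_0$ once and for all (including $\ell = 0$; no separate case is needed), and sends the parameter $k$ in $\phi_{(k)}$ to infinity. The relevant generators are not the $\xi_i$ but a pair $\check c^+_{k,0}, \hat c^+_{k,0}$ lying in $\supp \phi_{(k)}$, of degrees $1$ and $0$ by Lemma~\ref{lem:degrees_phi}. Since these degrees are below $n$, they carry no homology (Lemma~\ref{lem:degrees_tau}), so any bar through them is finite. The new work is to prove $\partial \check c^+_{k,0} = \hat c^+_{k,0}$: for $n > 2$ this is forced by degree, while for $n = 2$ an action comparison is needed to exclude the other degree-$0$ generator $\check c^-_{k,0}$. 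The action difference is then read off from~\eqref{eq:primitive_phi} and grows linearly in $k$; because both generators lie outside $\supp \tau^2$, the computation is insensitive to $\ell$.
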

\begin{figure}[ht] 
    \centering 
    \includegraphics[width=0.95\textwidth]{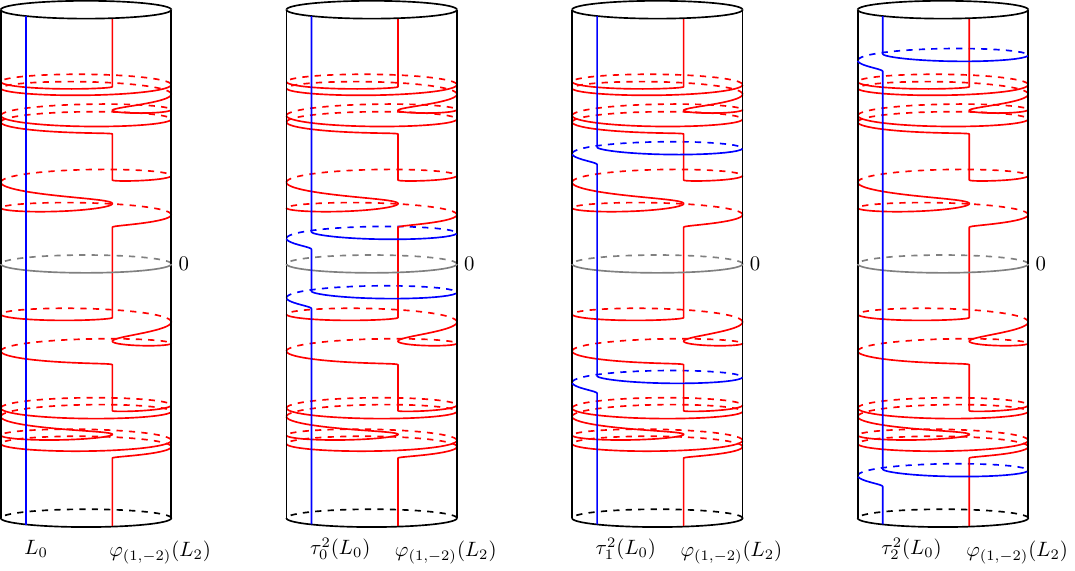} 
    \caption{Illustration of the Dehn twists used in the proof of Theorem~\ref{thm:main_1}.}\label{fig:spectral}
\end{figure}
\subsection{Strategy of the proof}\label{sec:intro_proof}
{
We will now give a very brief outline of the proof strategy for the aforementioned results.
Since it is more natural, we only sketch the construction of finite-dimensional quasi-flats.
The construction of infinite-dimensional quasi-flats in Section~\ref{sec:infty_flats} is only a slight technical variation of this idea.
The proof boils down to the usage of persistence data coming from a Floer complex.

We work in a Weinstein neighborhood $U$ of the middle sphere $L_1 \subset M$.
Henceforth, we further fix an (auxiliary) framing of $L_1$, i.e.\ a smooth identification with the round standard sphere $S^n$.
We regard $L_1$ as endowed with the round metric induced by this framing.
Let $d$ be the dimension of the quasi-flat we wish to construct.
We then split $U$ into $2d+1$ radial shells.
Here \textit{radial} refers to the norm of the tangent vector in $TS^n$ that we can associate 
to any $c \in U$ by using the Weinstein symplectomorphism between $U$ and a neighborhood of the zero-section in $T^*L_1 \cong T^*S^n$.
The musical isomorphism $\sharp: T^*S^n \to TS^n$ associated to the round metric then gives us a tangent vector.
A \textit{radial shell} means all such $c \in U$ whose associated tangent vector in $TS^n$ has a norm in a specific closed interval.

We equip these shells with preferred models of a Dehn twist around $L_1$ such that the square of the Dehn twist 
is supported in the respective shell. These are constructed in Section~\ref{sec:setup}.
We denote these by $\tau_0,\dots,\tau_d$, where $\tau_i^2$ is supported in the $(2i+1)$-th radial shell.

To construct the flat, we associate to an element $v \in \Reals^d$ a Hamiltonian diffeomorphism $\phi_v$ whose support lies 
outside that of any $\tau_i^2$.
The value $v_i$ controls the behavior of $\phi_v$ in the $2i$-th radial shell.
See Section~\ref{sec:setup} for the details of the construction which is based 
on a reparametrization of the cogeodesic flow on $T^*S^n \cong T^*L_1$.
For a generic $v \in \Reals^d$ we have $\phi_v(L_2) \trans \tau^{2k}_i(L_0)$ for $i \in \{0,\dots,d-1\}$ and $k \in \Nats$.
Thus, we can define the Floer complex $CF(\tau^{2k}_i(L_0), \phi_v(L_2))$ without a Hamiltonian perturbation.
By applying an iterate of $\tau_i^2$ to $L_0$ we generate many intersection points in $(2i+1)$-th radial shell. 
Some of these intersection points will define homology classes which do not contain any of the intersection points generated by $\phi_v$, i.e. which also lie in $L_0 \cap \phi_v(L_2)$.
We then look at the spectral invariants of these classes. 
Since they are supported at a single generator, their spectral invariants can be computed relatively easily.
By comparing the spectral invariants obtained this way from $\tau_i$ and $\tau_{i+1}$, we can isolate what happens in the $2(i+1)$-th radial shell.
See Figure~\ref{fig:spectral} for a visualization of the geometric idea.
We denote these differences by $a_i(v)$, see Section~\ref{sec:spectral} for the actual construction.

An index argument given through Lemma~\ref{lem:degrees_phi} and Lemma~\ref{lem:degrees_tau}
establishes that there are certain degrees which can only contain such classes.
We then utilize that
$HF(\tau^{2k}_i(L_0), \phi_v(L_2))$ has at most one class of a given degree for $n \geq 2$ (see~\cite{frauenfelder-schlenk-2005}).
The map induced on Floer homology by a Hamiltonian diffeomorphisms has to preserve these classes.
By utilizing this fact and the Hofer-Lipschitz property of spectral invariants (cf. Corollary~\ref{cor:lipschitz}), we can show that $\abs{a_i(v) - a_i(w)} \leq 2d_{\Hof}(\phi_v(L_2), \phi_w(L_2))$.
Due to the construction of $\phi_v$ and $\phi_w$ we can explicitly compute that $\abs{a_i(v) - a_i(w)} = \abs{v_{i+1} - w_{i+1}}$.
This can be repeated for any $i \in \{0,\dots,d-1\}$, and we obtain Lemma~\ref{lem:lower_bound}, which asserts that
\[\frac{1}{2}\norm{v-w}_{\infty} \leq d_{\Hof}(\phi_v(L_2), \phi_w(L_2)).\]
The upper inequality can be obtained by an easy and direct computation.
However, the resulting inequality is phrased in terms of the $\norm{\cdot}_1$-norm (cf. Lemma~\ref{lem:upper_bound}).
A slightly more involved construction given in Section~\ref{sec:infty_flats} allows us 
to obtain an upper bound with respect to the $\norm{\cdot}_\infty$-norm.}
This complete the proof of Theorem~\ref{thm:main_1}.
\subsection{Structure of the paper}
The paper is organized as follows: The actual construction of the finite-dimensional quasi-flats is contained in Section~\ref{sec:setup}.
The lower bound for the quasi-isometry is proved in Section~\ref{sec:spectral}.
Then, the upper bound for a finite-dimensional version of Theorem~\ref{thm:main_1} is obtained in Section~\ref{sec:flats}.
In Section~\ref{sec:infty_flats}, the infinite-dimensional case is treated separately and the proof of Theorem~\ref{thm:main_1} is given.
The proof of Theorem~\ref{thm:main_3} is given in Section~\ref{sec:boundary_depth}.
The action and index computations contained in Sections~\ref{sec:action_comp} and~\ref{sec:index_comp} are used throughout these proofs.
The purpose of Section~\ref{sec:prelims} is to fix notation and conventions as the actual proofs are computational and rely on explicit choices of gradings and primitives. 
All the material contained in this section is standard. However, for the convenience of the reader we include this detailed review.
An expert reader might wish to start with Section~\ref{sec:setup} instead and refer to Section~\ref{sec:prelims} only as needed.
\subsection{Acknowledgments}
I would like to thank Paul Biran for supervising the Master's thesis out of which the idea for this paper originally grew and many illuminating conversations about the topics discussed therein.
I would also like to thank Ivan Smith and Jack Smith for many invaluable conversations
and Wenmin Gong for discussing his work on the unboundedness of the Hofer norm on cotangent fibers with me.
Finally, I thank the anonymous referee for helpful comments and
corrections. 
During the writing of this paper, I was supported by EPSRC grant EP/X030660/1.

\section{Preliminaries}\label{sec:prelims} 
Here we want to briefly recall the relevant concepts from persistent homology and Floer theory for this paper.
We refer the reader to~\cite{seidel-2007, seidel-2008, fukaya-lagrangian-2010} for a detailed treatment of the Floer homology side and to~\cite{polterovich-et-al-2020} and~\cite{usher-2008} for a comprehensive overview of persistent homology theory.
All results in this section are well-known in the field and are just restated to either fix conventions or for the convenience of the reader.
\subsection{Persistent Homology}\label{sec:persistence}
In the following section we will give a brief overview of persistence modules and persistent homology. The exposition follows~\cite{polterovich-et-al-2020}.
For the following we fix any field $\field$,
even though we will later always work with $\field = \field_2$.

We will now quickly introduce the definition of a persistence module, as well as a metric on the space of (isomorphism classes of) persistence modules.
\begin{definition}\label{def:persistence_module}
    A persistence module is a pair $(V, \pi)$, where $V$ is a collection ${\{V_t\}}_{t\in\Reals}$ of finite-dimensional $\field$-vector spaces and 
    ${\{\pi_{s,t}: V_s \to V_t \}}_{s\leq t}$ is a 
    collection of linear maps  which fulfill these conditions:
    \begin{enumerate}
        \item For any $s \leq t \leq r$ the following diagram commutes 
                \begin{center}
                    \begin{tikzcd}
                    V_s \arrow[rrr, "\pi_{s,t}"] \arrow[rrrrrr, "{\pi_{s,r}}", bend left] &  &  & V_t \arrow[rrr, "\pi_{t,r}"] &  &  & V_r.
                    \end{tikzcd}
                \end{center}
            \item For any $t \in \Reals$ there exists an $\eps > 0$ such that for any $t - \eps \leq s \leq t$ the map $\pi_{s,t}$ is an isomorphism of 
                $\field$-vector spaces.
            \item The set $\Spec V \subset \Reals$ defined by 
                \begin{align*}
                    \Spec V \coloneqq \{&t \in \Reals \mid \\
                        &\forall \eps > 0 \exists s,r \in \interval{t-\eps}{t+\eps}: \pi_{s,r} \text{ not an isomorphism}\}
                \end{align*}
                is closed, discrete and bounded from below.
    \end{enumerate}
\end{definition}
\begin{remark}\label{rmk:filtered_cc}
    The example which is most relevant for us is the following: Let $(C^\bullet,\partial_C^\bullet)$ be an $\Reals$-filtered finitely generated chain complex.
    For any $\lambda \leq \mu$, denote the natural inclusion of $C^\lambda$ into $C^\mu$ by $i_{\lambda,\mu}: C^\lambda \to C^\mu$.
    Then we can set $V_s \coloneqq H_*(C^s,\partial_C^s)$ for any $s \in \Reals$.
    Let $\pi_{s,t} \coloneqq (i_{s,t})_*: H_*(C^s,\partial_C^s) \to H_*(C^t,\partial_C^t)$.
    Then $(V,\pi)$ has the structure of a persistence module.
    We denote $(V,\pi)$ by $H_*{(C,\partial_C)}^\bullet$.
\end{remark}
\begin{definition}
    Let $(V,\pi)$ and $(V',\pi')$ be persistence modules. Then a morphism of persistence modules $A: (V,\pi) \to (V',\pi')$ is a family $A_t: V_t \to V'_t$
    of linear maps such that for any $s \leq t$ this diagram commutes:
    \begin{center}
        \begin{tikzcd}
            V_s \arrow[rr, "{\pi_{s,t}}"] \arrow[dd, "A_s"'] &  & V_t \arrow[dd, "A_t"] \\
                                                            &  &                       \\
            V'_s \arrow[rr, "{\pi'_{s,t}}"]                  &  & V'_t                 
        \end{tikzcd}
    \end{center}
\end{definition}
\begin{definition}
    Let $(V,\pi)$ be a persistence module and let $\delta \in \Reals$. Then we define the $\delta$-shift of $V$ denoted by $(V[\delta],\pi[\delta])$ to be $V{[\delta]}_t = V_{t+\delta}$
    and ${\pi[\delta]}_{s,t} = \pi_{s+\delta,t+\delta}$. Note that if $\delta \geq 0$ there is a canonical morphism $V \to V[\delta]$ given by $\pi_{t,t+\delta}$.
    Given a morphism $F: V \to W$ we denote by $F[\delta]: V[\delta] \to W[\delta]$ the corresponding morphism on the shifted persistence modules.
\end{definition}
\begin{definition}\label{def:interleaving}
    Let $V,W$ be persistence modules.
    Given $\delta > 0$ a pair of morphism $F: V \to W[\delta], G: W \to V[\delta]$ is called $\delta$-interleaving if the following diagrams commute:
    \begin{center}
    \begin{tikzcd}
        V \arrow[r] \arrow[r, "F"'] \arrow[rr, bend left=49] & {W[\delta]} \arrow[r, "{G[\delta]}"'] & {V[2\delta]} &  & W \arrow[rr, bend left=49] \arrow[r, "G"'] & {V[\delta]} \arrow[r, "{F[\delta]}"'] & {W[2\delta]}
    \end{tikzcd}
    \end{center}
    Here the unlabeled arrows are the canonical morphism associated with a non-negative shift of a persistence module. We call $V$ and $W$ $\delta$-interleaved.
\end{definition}
\begin{definition}
    Given two persistence modules $V,W$ we define 
    \[d_{\inter}(V,W) = \inf \{\delta > 0 \mid V,W \text{ are }\delta\text{-interleaved}\}.\]
    We call $d_{\inter}$ the interleaving distance.
\end{definition}
\noindent Another perspective on persistence modules is given by barcodes. Barcodes can be seen as a particularly useful representation of persistence modules.
\begin{definition}\label{def:barcode}
    A barcode $\mathcal{B}$ is a countable collection of intervals $\interval[open left]{a}{b}$ with $-\infty < a < b \leq +\infty$, called bars, with multiplicities such that 
    \begin{enumerate}
        \item for every $c \in \Reals$ there exists a neighborhood of $c$ which intersects only finitely many bars when counted with multiplicities; and
        \item the set of all endpoints of bars is a closed discrete subset of $\Reals$ bounded from below.
    \end{enumerate}
\end{definition} 
\begin{remark}
    It is possible to also consider \textit{graded} barcodes. In that case any bar would come equipped with an integer grading, i.e.\ an integer associated to it.
    These barcodes are usually used when working with graded homology theories. While we will be considering the graded Floer homology later, we do not need 
    this information on the barcodes.
\end{remark}
\begin{example}
    Let $-\infty < a < b \leq +\infty$ be a pair of real numbers. Then for any $m \in \Nats_{\geq 1}$ we can define the persistence module $\field^m(a,b]$ by
    \[\field^m(a,b] = \begin{cases} \field^m & \text{if } t\in (a,b] \\ 0 & \text{otherwise} \end{cases}\]
        and the maps $\pi_{s,t}$ are given by the identity for $s,t \in (a,b]$ and the zero map otherwise.
\end{example}
We can clearly see that we can define a persistence module based on any barcode. We can just take a direct sum of $\field^m(a,b]$ for all bars $(a,b]$ 
with $m$ being the respective multiplicity. The conditions in Definition~\ref{def:barcode} will then exactly imply the conditions in Definition\ \ref{def:persistence_module} 
that are necessary.
The normal form theorem
(cf.~\cite[Theorem 2.1.2]{polterovich-et-al-2020}) tells us that up to isomorphism any persistence module is represented by a barcode in this way.
Henceforth, we will denote the barcode associated to a persistence module $(V, \pi)$ by $\mathcal{B}(V)$.
We now introduce a metric on barcodes, which is dual to the interleaving distance.
\begin{definition}
    Let $\mathcal{B},\mathcal{C}$ be barcodes. Then a matching $\mu$ of $\mathcal{B}$ and $\mathcal{C}$ is a collection 
    of pairs ${\{(A,B) \in \mathcal{B} \times \mathcal{C}\}}_{i \in I}$ where the number of occurrences of any interval is at 
    most its multiplicity. Given intervals $A \in \mathcal{B},B \in \mathcal{C}$ 
    we call them matched if $(A,B) \in \mu$ and unmatched else. It is important that we count with multiplicities here, 
    i.e.\ if $A \in \mathcal{B}$ has multiplicity $n$ we treat it as $n$ distinct copies for the purpose of it being matched.
\end{definition}
To make the notion slightly more concise we say that a bar is matched in general if it is matched to any bar of the other barcode. 
Of course, we again have to treat a bar with multiplicity $n$ as $n$ distinct copies of itself for the purpose of it being matched.
\begin{definition}\label{def:delta_matching}
    Let $\mathcal{B},\mathcal{C}$ be barcodes and $\mu$ a matching of $\mathcal{B}$ and $\mathcal{C}$.
    Then $\mu$ is called a $\delta$-matching for $\delta > 0$ if the following conditions are met:
    \begin{enumerate}
        \item For any intervals $\interval[open left]{a}{b} = I \in \mathcal{B},\interval[open left]{c}{d}= J \in \mathcal{C}$ that are matched in $\mu$ we have 
        \[ \abs{a-c} \leq \delta  \text{ and }\abs{b-d} \leq \delta.  \]
        \item For any interval $\interval[open left]{a}{b} = I \in \mathcal{B}$
        that is unmatched in $\mu$ we have $b-a \leq 2\delta$.
        \item For any interval $\interval[open left]{a}{b} = I \in \mathcal{C}$
        that is unmatched in $\mu$ we have $b-a \leq 2\delta$.
    \end{enumerate}
\end{definition}
\begin{definition}
    Let $\mathcal{B},\mathcal{C}$ be barcodes. We define 
    \[d_{\bottle}(\mathcal{B},\mathcal{C}) \coloneqq \inf \{\delta > 0 \mid \exists \delta\text{-matching of }\mathcal{B}\text{ and }\mathcal{C}\},\]
    which we call bottleneck distance.
\end{definition}
As mentioned before, the bottleneck distance can be seen as a version of the interleaving distance.
This fact can be expressed as the following isometry theorem, a proof of which can be found in~\cite[Ch. 3]{polterovich-et-al-2020}.
\begin{theorem}\label{thm:persistence_isometry}
    Let $V,W$ be two persistence modules, then 
    \[d_{\inter}(V,W) = d_{\bottle}(\B(V),\B(W)).\]
\end{theorem}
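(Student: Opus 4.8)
The plan is to establish the two inequalities separately, reducing in each case to interval modules via the normal form theorem (\cite[Theorem~2.1.2]{polterovich-et-al-2020}): write $V \cong \bigoplus_{I \in \B(V)} \field(I)$ and $W \cong \bigoplus_{J \in \B(W)} \field(J)$, where $\field(a,b]$ is the interval module from the preceding example. The local finiteness built into Definition~\ref{def:barcode} guarantees that at every parameter only finitely many summands are nonzero, so morphisms and interleavings of direct sums can be assembled pointwise from morphisms and interleavings of the summands.

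For the inequality $d_{\inter}(V,W) \le d_{\bottle}(\B(V),\B(W))$, fix $\delta > d_{\bottle}(\B(V),\B(W))$ and a $\delta$-matching $\mu$. For each matched pair $\bigl((a,b],(c,d]\bigr)\in\mu$ the bounds $|a-c|\le\delta$ and $|b-d|\le\delta$ force the support of $\field(a,b]$ and that of the shift $\field(c,d][\delta] = \field(c-\delta,d-\delta]$ to overlap in an interval whose endpoints agree up to the ambient shift; declaring the interleaving morphisms to be the identity on the relevant overlaps and zero elsewhere gives honest persistence-module morphisms, and a direct check shows both triangles of Definition~\ref{def:interleaving} commute, since in each case both composites agree with the canonical shift-by-$2\delta$ map, which is supported on a subinterval of the overlap. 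For an unmatched bar $(a,b]$, conditions (2)--(3) of Definition~\ref{def:delta_matching} give $b-a\le 2\delta$, and then $\field(a,b]$ is $\delta$-interleaved with the zero module precisely because the canonical shift-by-$2\delta$ map on $\field(a,b]$ vanishes when $b-a\le 2\delta$. The direct sum of all these interleavings is a $\delta$-interleaving of $V$ and $W$; letting $\delta\downarrow d_{\bottle}$ concludes this direction.

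The reverse inequality $d_{\bottle}(\B(V),\B(W)) \le d_{\inter}(V,W)$ is the algebraic stability theorem and constitutes the bulk of the argument; I would prove it via the induced-matching method of Bauer and Lesnick. The first step attaches to any morphism $f\colon V\to W$ of persistence modules a canonical matching $\chi(f)$ of $\B(V)$ with $\B(W)$, obtained by factoring $f$ through the submodule $\im f$ of $W$ and composing the canonical matchings induced by the epimorphism $V\twoheadrightarrow\im f$ and the monomorphism $\im f\hookrightarrow W$; one then proves the quantitative statement that if every bar of $\B(\ker f)$ and of the barcode of the cokernel $W/\im f$ has length $\le\eps$, then $\chi(f)$ matches a bar $(a,b]$ of $\B(V)$ to a bar $(a',b')$ of $\B(W)$ with $|a-a'|\le\eps$ and $|b-b'|\le\eps$, while every unmatched bar on either side has length $\le\eps$. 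The second step observes that a $\delta$-interleaving $(F\colon V\to W[\delta],\,G\colon W\to V[\delta])$ makes $G[\delta]\circ F$ equal to the canonical morphism $V\to V[2\delta]$, which forces $\ker F$ to be supported in bars of length $\le 2\delta$, and symmetrically (from $F[\delta]\circ G$ being the canonical morphism $W\to W[2\delta]$) forces the cokernel of $F$ to be supported in bars of length $\le 2\delta$. Applying the first step to $F$ with $\eps = 2\delta$ and then translating each bar of $\B(W[\delta])$ back by $\delta$ to land in $\B(W)$ produces a matching of $\B(V)$ with $\B(W)$; combining the endpoint estimates with the length bounds on unmatched bars (and the symmetric information from $G$) verifies that it is a $\delta$-matching in the sense of Definition~\ref{def:delta_matching}, and letting $\delta\downarrow d_{\inter}(V,W)$ gives the claim.

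The main obstacle is the first step of the hard direction: constructing the induced matching $\chi(f)$ and proving its quantitative injectivity and surjectivity properties. This rests on the structure theory of pointwise finite-dimensional persistence modules — available here because each $V_t$ is finite-dimensional and $\Spec V$ is discrete — together with a careful bookkeeping of how the birth and death parameters of the interval summands change when one passes to a submodule or a quotient; this bookkeeping is where essentially all of the work lies. An alternative route to this direction is the interpolation (``box'') argument of Chazal--Cohen-Steiner--Glisse--Guibas--Oudot: interpolate by a family $(V^s)_{s\in[0,1]}$ with $V^0 = V$, $V^1 = W$ and $V^s,V^t$ mutually $|s-t|\delta$-interleaved, and bound the total bottleneck variation of the barcode along the family by $\delta$; this requires a continuity and compactness input of comparable difficulty. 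In either approach, one should keep in mind that our bars are half-open and our distances are defined with strict inequalities, so the intermediate estimates are most safely carried out with $\le$ and the final equality recovered by passing to infima over $\delta$.
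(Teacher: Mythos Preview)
The paper does not actually prove this theorem: immediately before the statement it writes ``a proof of which can be found in~\cite[Ch.~3]{polterovich-et-al-2020}'' and then moves on. So there is nothing to compare against on the paper's side beyond that citation.

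Your sketch is a faithful outline of the standard proof and, in particular, of the one given in the cited reference: the easy direction (matching $\Rightarrow$ interleaving) by hand on interval summands, and the hard direction (interleaving $\Rightarrow$ matching) via the Bauer--Lesnick induced-matching machinery. One point to tighten if you actually write this out: applying the induced-matching theorem to $F\colon V\to W[\delta]$ with the naive bound $\eps=2\delta$ on kernel and cokernel bars does \emph{not} by itself give a $\delta$-matching after the shift --- it would give $2\delta$ on the endpoints. The correct argument uses the asymmetric form of the induced-matching bounds (for a mono the right endpoints agree exactly and only left endpoints move; dually for an epi), so that after factoring $F$ through its image and then undoing the $[\delta]$-shift the endpoint discrepancies are at most $\delta$, not $2\delta$. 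Your phrase ``combining the endpoint estimates with the length bounds on unmatched bars (and the symmetric information from $G$)'' is where this refinement has to happen; as written it glosses over the step that prevents the constant from doubling.
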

An intrinsic measurement for a barcode is to consider the length of its longest finite bar.
This is precisely half the bottleneck distance between the barcode and its infinite bars.
This notion was introduced (in a symplectic context) in~\cite{usher-2013} but can be naturally phrased in the language of barcodes.
\begin{definition}\label{def:boundary_depth}
    Let $\B$ be a barcode. Then the \textit{boundary depth} of $\B$, denoted $\beta(\B)$, is given by the longest length of a finite bar in $\B$.
    If $\B$ has no finite bars, we set $\beta(\B) = 0$.
    Let $(V,\pi)$ be a persistence module. Then we use the shorthand $\beta(V) \coloneqq \beta(\B(V))$ for the boundary depth of the associated barcode.
\end{definition}
The concept of spectral invariants in Floer theory is much older and has been studied in various settings.
See e.g.~\cite{schwarz-2000,oh-2005, leclercq-2008, usher-2008}.
Our definition below is a pure persistence reformulation.
We refer the reader to~\cite{polterovich-et-al-2020} for more information on the abstract concept and to~\cite{usher-2008} for more information 
on spectral invariants in Floer theory.
To define spectral invariants, we first have to introduce the concept of the terminal vector space of a persistence module.
Let $(V,\pi)$ be a persistence module. Note that ${\{V_t\}}_{t \in \Reals}$ together with ${\{\pi_{s,t}\}}_{s \leq t}$ form a direct system.
We denote its limit by $V_\infty \coloneqq \varinjlim V_t$, which is called the terminal vector space of $(V,\pi)$.
\begin{definition}\label{def:spectral_invariant}
    Let $(V,\pi)$ be a persistence module and $\alpha \in V_\infty$.
    Then 
    \[c(\alpha) \coloneqq \inf \{t \in \Reals \mid \alpha \in \im V_t \to V_\infty\}\]
    is called the spectral invariant of $\alpha$.
\end{definition}
\begin{remark}
    The spectral invariant of $\alpha \in V_\infty$ can also be though of naturally as the left endpoint 
    of the infinite-length bar in $\B(V)$ that represents $\alpha$.
\end{remark}
\subsection{Lagrangian Floer theory} \label{sec:floer_theory_setup}
Henceforth, all symplectic manifolds and their Lagrangian submanifolds will be implicitly assumed to be connected. We assume that Lagrangians 
are always properly embedded. All Hamiltonian functions and diffeomorphisms will be implicitly assumed to be compactly supported away from the boundary unless explicitly stated otherwise.
Given a Hamiltonian diffeomorphism $\phi$ on a symplectic manifold $(M,\omega)$ we will denote by $\mathcal{H}(\phi)$ the set of compactly supported Hamiltonian functions $H: [0,1] \times M \to \Reals$ such that the time-1 
flow of the Hamiltonian vector field $X_H$, given by $\omega(X_H,\cdot) = -dH$, is $\phi$. The group of all compactly supported Hamiltonian diffeomorphisms on $M$ is denoted by $\Ham_c(M)$.
\begin{definition}\label{def:osc}
    Let $H \in C^{\infty}([0,1] \times M,\Reals)$ be a Hamiltonian. Then the oscillation of $H$ is given by 
    \[ \osc (H) \coloneqq \int_0^1 \left[ \max_{x \in M} H(t,x) - \min_{x \in M} H(t,x) \right] dt.\]
\end{definition}
\begin{definition}\label{def:hofer_norm}
    Let $\phi: M \to M$ be a Hamiltonian diffeomorphism compactly supported away from the boundary. Then the Hofer norm of $\phi$ is given by 
    \[\norm{\phi}_{\Hof} \coloneqq \inf_{H \in \mathcal{H}(\phi)} \osc (H).\]
\end{definition}
\begin{definition}\label{def:Lag_hofer_norm}
    Let $L_0,L_1 \subset M$ be Lagrangian submanifolds such that there exists a $\phi \in \Ham_c(M)$ such that $\phi(L_0) = L_1$.
    In this case we call 
    \[d_{\Hof}(L_0,L_1) \coloneqq \inf \{\norm{\phi}_{\Hof} \mid \phi \in \Ham_c(M), \phi(L_0) = L_1\}\]
    the Hofer distance between the Lagrangians $L_0$ and $L_1$.
    If such a $\phi$ does not exist, we set $d_{\Hof}(L_0,L_1) = \infty$.
\end{definition}
We now fix some sign conventions. This is necessary since multiple conventions can be found, under the same names, in the literature.
\begin{definition}\label{def:liouville-vf}
    Let $(M, \omega=d\lambda)$ be an exact symplectic compact manifold, then the vector field $X_\lambda \in \Gamma(TM)$ defined by
    $\omega(\cdot, X_\lambda) = \lambda$ is called the Liouville vector field on $M$.
\end{definition}
\begin{definition}\label{def:liouville-dom}
    An exact compact symplectic manifold $(M, \omega=d\lambda)$ with a compatible almost-complex structure $J:TM \to TM$ is called Liouville domain if the 
    Liouville vector 
    field $X_\lambda$ points transversally inwards on the boundary $\partial M$.
\end{definition}
After this quick recap of some basic notions, we will now lay out the version of Lagrangian Floer theory we use in this text.
Our setup is almost completely identical to that of~\cite[Sec. III.8]{seidel-2008} and closely follows~\cite[Section 2.2.2]{biran-cornea-2022}.
There are however two major differences:
\begin{enumerate}
    \item We use \textit{homological} rather than cohomological conventions, as are used in~\cite{seidel-2008}.
    \item We will not ignore the grading of the homology groups, as is done in~\cite{biran-cornea-2022}. We use graded Lagrangians for this purpose, as introduced in~\cite{seidel-2000}.
\end{enumerate}

Let $(M, \omega = d\lambda)$ be a Liouville domain with a fixed primitive $\lambda$. Further, assume that $2c_1(M) = 0$. Denote by 
$\mathscr{J}$ the set of all $\omega$-compatible almost-complex structures on $M$. Let $L_0,L_1 \subset M$ be compact exact Lagrangian submanifolds. Exactness is always taken to be with respect to the fixed primitive. If $M$ has boundary, we require that there exists a neighborhood of the boundary that contains no intersection points of $L_0$ and $L_1$. A pair $(H,J)$ with $H \in C^{\infty}([0,1]\times M, \Reals)$ and $J \in C^\infty([0,1], \mathscr{J})$ is called
a \textit{Floer datum} if $\phi(L_0) \trans L_1$, $\phi(L_0) \cap L_1 \cap \partial M = \emptyset$, and $\supp H \Subset \inter(M)$.
We further fix primitives 
\[h_{L_k}: L_k \to \Reals\]
for $\lambda\vert_{L_k}$ for $k = 0,1$.
Then we can define the set 
\[\mathscr{P}(L_0,L_1) = \{y \in C^{\infty}([0,1],M) \mid y(0) \in L_0, y(1) \in L_1\}\]
of paths from $L_0$ to $L_1$ endowed with the $C^\infty$-topology.
We focus on the subset
\[\Chords(L_0,L_1; H) = \{y \in \mathscr{P}(L_0,L_1) \mid \dot y(t) = X_H(t,y)\},\]
whose elements are called \textit{Hamiltonian chords}. We usually denote this by $\Chords(L_0,L_1)$ if there is no confusion about the Hamiltonian $H$.
Recall the \textit{Floer equation} for a map $u \in C^{\infty}(\Reals \times [0,1], M)$:
\begin{align} \label{eq:Floer_PDE}
    \begin{cases} \partial_s u + J(t,u)\partial_t u = J(t,u) X_H(t,u) & \\ 
        u(s,0) \in L_0, u(s,1) \in L_1 & \forall s \in \Reals.
    \end{cases}
\end{align}
A solution to~\eqref{eq:Floer_PDE} is called a \textit{Floer trajectory}. We define the energy of such a solution $u: \Reals \times [0,1] \to M$
as 
\begin{equation}\label{eq:floer_energy} E(u) = \int_{\Reals \times [0,1]} \norm{\partial_s u}^2 dt ds, \end{equation}
where $\norm{\cdot}$ is the norm induced by $\omega(\cdot, J(t)\cdot)$.
We will only consider solutions with finite energy.
Assume we have a solution $u$ of~\eqref{eq:Floer_PDE} such that $u(s,\cdot)$ converges uniformly as $s \to \pm \infty$.
Given a pair $y_0,y_1 \in \mathcal{C}(L_0,L_1)$ we denote the set of finite energy solutions of~\eqref{eq:Floer_PDE} with 
\begin{align} \label{eq:Floer_boundary_cond}
    \begin{cases}
        \lim_{s\to+\infty} u(s,t) = y_1 & \text{uniformly in }t\\
        \lim_{s\to-\infty} u(s,t) = y_0 & \text{uniformly in }t
    \end{cases}
\end{align}
by $\M(y_0,y_1)$. If $y_0 \not= y_1$
the moduli space $\M(y_0,y_1)$ admits a free $\Reals$-action given by shifting the $s$-coordinate.
We write
\[\M^*(y_0,y_1) \coloneqq \faktor{\M(y_0,y_1)}{\Reals}\]
for the quotient space under the aforementioned action. We also set $\M^*(y_0,y_0) = \emptyset$ as a matter of convention.
It is a standard fact in the field --- see e.g.~\cite[Ch. 2]{fukaya-lagrangian-2010} ---, that there is a dense subset $\mathscr{J}_{reg} \subset C^\infty([0,1],\mathscr{J})$ of $\omega$-compatible almost-complex structures for which $\M^*(y_0,y_1)$
is either the empty set or 
admits a Gromov-Floer compactification that is a smooth manifold (possibly with connected components of different dimensions) for any $y_0,y_1 \in \mathcal{C}(L_0,L_1)$.
An almost-complex structure in this set is called regular.
We extend the notion of regularity to Floer data. We say that a Floer datum $\mathcal{D} = (H,J)$ is \textit{regular} if the almost-complex structure $J$ 
is regular with respect to $H$ in the above sense.
Let us fix a regular Floer datum $(H,J)$ and define
\[CF(L_0,L_1) \coloneqq \bigoplus_{y \in \Chords(L_0,L_1)}\field_2 \cdot y 
= \linspan_{\field_2} \{y \in \Chords(L_0 , L_1) \},\]
which we call the \textit{Floer complex}.

We will work with graded Floer homology.
{For the detailed construction we refer the reader to~\cite[Sec. II.11]{seidel-2008}
and the other aforementioned literature.
We will later use explicit index computations as a vital part of the main proof.
These are carried out in settings where the Lagrangians intersect transversally.
We therefore quickly review the grading for this setting.}
We follow the framework of graded Lagrangians established in~\cite{seidel-2000}.
Let us recall some notation: Denote by $\LG \to M$ the Lagrangian Grassmannian bundle, i.e.\ the bundle whose fiber $\LG_x$ over $x \in M$ is the 
Lagrangian Grassmannian $\LG(T_x M, \omega_x)$. 
An $\infty$-fold Maslov covering is a covering $\LG^\infty \to \LG$ whose restriction over $\LG_x$ is isomorphic to the universal cover 
$\widetilde{\LG_x}$ for all $ x\in M$.
Such a covering always exists if $c_1(M)$ is $2$-torsion and the equivalence classes of such $\infty$-fold Maslov coverings
form an affine space over $H^1(M; \Z)$, see e.g.~\cite[Lemma 2.2]{seidel-2000}.

In the following, fix an $\infty$-fold Maslov covering $\LG^\infty \to \LG$.
An $\LG^\infty$-grading (henceforth simply called grading) of a Lagrangian $L \subset M$ is a lift $\tilde{L}: L \to \LG^\infty$
of the canonical section $s_L: L \to \LG$ given by $x \mapsto T_xL$.
If the Lagrangian intersects the boundary, a lift away from the boundary is sufficient.
Henceforth, we will assume that our Lagrangians $L_0,L_1 \subset M$ come equipped with gradings $\tilde{L}_0,\tilde{L}_1$.
Further, assume that $L_0 \trans L_1$.
We can now use the Maslov index for paths to fix an absolute grading of the Floer complex.
For any $x \in L_0 \cap L_1$, we obtain 
$\tilde{L}_0(x), \tilde{L}_1(x) \in \LG^\infty$.
Now let $\tilde\lambda_0,\tilde\lambda_1: \interval{0}{1} \to \LG^\infty$ be two paths with 
\begin{align*}
\tilde\lambda_0(0) &= \tilde\lambda_1(0) & \tilde{\lambda}_0(1) &=\tilde{L}_0(x) & \tilde{\lambda}_1(1) &=\tilde{L}_1(x).
\end{align*}
Denote the projections of these paths to $\LG$ by $\lambda_0, \lambda_1$.
Then we define the index of $x$ to be 
\begin{equation} \label{eq:abs_grading}
    \mu(x;\tilde L_0,\tilde L_1) \coloneqq\frac{\dim M}{4}-\mu(\lambda_0,\lambda_1) , \end{equation}
where $\mu(\lambda_0,\lambda_1)$ is the Maslov index for paths as defined in~\cite{robbin-salamon-1993}.
From the properties of the Maslov index, it is easy to see that this is independent of all auxiliary choices (i.e.\ the paths and an identification $T_x M \cong \Reals^{2n}$). We point an interested reader to~\cite{seidel-2000} for details.
\begin{theorem}[Floer]
In the above setting we can define the boundary operator $\partial$ by linear extension of
\begin{align}\label{eq:Fl_boundary_op}
    \partial y_0 = \sum_{\substack{y_1 \in \Chords(L_0,L_1) \\ \mu (y_1; \tilde L_0,\tilde L_1) = \mu (y_0;\tilde L_0,\tilde L_1) - 1}} \sharp \M^*(y_0,y_1)y_1 \mod 2
\end{align}
for any $y_0 \in \Chords(L_0,L_1)$.
Then $\partial \circ \partial = 0$ holds.
\end{theorem}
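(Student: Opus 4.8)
The plan is to follow the classical scheme for proving $\partial\circ\partial=0$ in Floer theory, adapted to the exact Lagrangian setting at hand; the heart of the matter is the standard ``breaking and gluing'' analysis of one-dimensional moduli spaces of Floer trajectories, so I would organize it as follows.

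First I would establish an a priori energy bound together with a confinement statement. Exactness of $L_0,L_1$ with respect to the fixed primitive $\lambda$, together with the chosen primitives $h_{L_0},h_{L_1}$, produces an action functional $\mathcal{A}$ on $\Chords(L_0,L_1)$ whose critical points are the Hamiltonian chords, and Stokes' theorem gives $E(u)=\mathcal{A}(y_0)-\mathcal{A}(y_1)$ for every finite-energy solution $u\in\M(y_0,y_1)$; this bounds the energy of all trajectories running between two fixed chords. Since $(M,\omega=d\lambda)$ is a Liouville domain, the Floer datum is compactly supported away from $\partial M$, and $\phi(L_0)\cap L_1\cap\partial M=\emptyset$ with $\supp\phi\Subset\inter(M)$, a maximum-principle argument shows that every such trajectory remains in a fixed compact subset of $\inter(M)$, so no trajectory escapes to the boundary.

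Second I would invoke transversality and Gromov--Floer compactness. For a regular Floer datum --- which exists by the density statement quoted from~\cite{fukaya-lagrangian-2010} --- each $\M^*(y_0,y_1)$ is a smooth manifold, a component through a given trajectory having dimension $\mu(y_0;\tilde L_0,\tilde L_1)-\mu(y_1;\tilde L_0,\tilde L_1)-1$; this is where the $\LG^\infty$-grading~\eqref{eq:abs_grading} enters, since the Fredholm index of the linearized Floer operator is computed from Maslov indices. By the energy bound of the first step, any sequence in $\M(y_0,y_1)$ subconverges in the Gromov--Floer sense; and because $\omega$ is exact and $L_0,L_1$ are exact, Stokes' theorem rules out non-constant holomorphic discs and spheres, so the only degeneration is the breaking of trajectories at intermediate chords --- there is no bubbling.

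Third --- the step I expect to be the main obstacle --- I would invoke the gluing theorem: given rigid $u_1\in\M^*(y_0,y_1)$ and $u_2\in\M^*(y_1,y_2)$, there is a one-parameter family in $\M(y_0,y_2)$ converging to the broken configuration $(u_1,u_2)$, identifying a neighborhood of $(u_1,u_2)$ in the compactified moduli space $\overline{\M^*}(y_0,y_2)$ with a half-open interval. Combined with transversality, this shows that when $\mu(y_0;\tilde L_0,\tilde L_1)-\mu(y_2;\tilde L_0,\tilde L_1)=2$ the space $\overline{\M^*}(y_0,y_2)$ is a compact $1$-manifold with boundary whose boundary is exactly $\bigsqcup_{y_1}\M^*(y_0,y_1)\times\M^*(y_1,y_2)$, the union taken over chords $y_1$ with $\mu(y_1;\tilde L_0,\tilde L_1)=\mu(y_0;\tilde L_0,\tilde L_1)-1$. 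The gluing construction --- an implicit-function-theorem argument on weighted Sobolev spaces --- is the analytically heavy input, and the energy/confinement bounds and regularity of the datum are precisely what make it applicable. To conclude, the coefficient of $y_2$ in $\partial(\partial y_0)$ is $\sum_{y_1}\sharp\M^*(y_0,y_1)\cdot\sharp\M^*(y_1,y_2)\bmod 2$, which by the previous step equals the number of boundary points of $\overline{\M^*}(y_0,y_2)\bmod 2$; a compact $1$-manifold has an even number of boundary points, so this vanishes, and as $y_0,y_2$ were arbitrary, $\partial\circ\partial=0$.
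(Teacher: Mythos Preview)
Your outline is correct and follows the standard argument. However, the paper does not actually prove this theorem: it is stated as a classical result attributed to Floer in the preliminaries section, where the author explicitly notes that all material is well-known and included only to fix conventions, referring the reader to~\cite{seidel-2007, seidel-2008, fukaya-lagrangian-2010} for details. So there is no proof in the paper to compare against; your write-up is essentially what those references contain.
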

Given this statement, $(CF(L_0,L_1),\partial)$ is a chain complex. We denote its homology by $HF(L_0,L_1) \coloneqq H_*(CF(L_0,L_1),\partial)$.
Since the almost-complex structure will not be important for us, we mostly suppress it from the notation. Therefore, if the Hamiltonian in the Floer datum vanishes
we write $HF(L_0,L_1)$. Otherwise, we use $HF(L_0,L_1; \mathcal{D})$ or $HF(L_0,L_1; H)$.
\subsection{Action filtration}\label{sec:action_filtration}
We keep the setting of the previous section.
We will now briefly review the natural persistence structure of the Floer complex.
It is well-known that the symplectic action functional induces a filtration on $CF(L_0,L_1)$.
Indeed, this filtration makes Floer homology into a persistence module.
\begin{definition}\label{def:action}
    The functional
    \begin{align*}
        \mathcal{A}: \mathscr{P}(L_0,L_1) &\to \Reals \\
                            y &\mapsto  \int_0^1 H(t,y(t)) dt - \int_0^1 y^*\lambda + h_{L_1}(y(1)) - h_{L_0}(y(0))
    \end{align*}
    is called (symplectic) action functional.
    Note that this functional depends on the choice of primitives on $L_0, L_1$.
\end{definition}
Let $u \in \M(y_0,y_1)$ be arbitrary for some $y_0,y_1 \in \Chords(L_0,L_1; H)$.
Recall that there is a bound on the energy~\eqref{eq:floer_energy} by the action:
$E(u) =  \mathcal{A}(y_0) - \mathcal{A}(y_1). $
Since clearly $0 \leq E(u)$ and equality occurs if and only if $u$ is constant in the first variable, we obtain 
\[\M^*(y_0,y_1) \not= \emptyset \implies \mathcal{A}(y_1) < \mathcal{A}(y_0).\]
By~\eqref{eq:Fl_boundary_op}, this shows that the boundary map is \enquote{action decreasing}.
It is convenient to extend $\A$ to the whole chain complex $CF(L_0,L_1)$.
We define 
\begin{align*}
    \A: CF(L_0,L_1) &\to \Reals \\
    \sum_{i=0}^k a_i y_i &\mapsto \max \left\{ \A(y_i) \mid a_i \not= 0\right\},
\end{align*}
where the left side is a sum of Hamiltonian chords with $\field_2$-coefficients.
Clearly, the boundary is still action decreasing with respect to this extension.

This action now gives us a filtration of the chain complex.
Formalizing this idea, for any $\lambda \in \Reals$ we can define 
\[CF(L_0,L_1)^\lambda \coloneqq \linspan_{\field_2} \langle y \in \Chords(L_0, L_1) \mid \mathcal{A}(y) < \lambda \rangle.\]
Our considerations from above imply that $\partial$ is compatible with the filtration of the Floer complex.
Thus, $CF(L_0,L_1)$ is an $\Reals$-filtered complex and by Remark~\ref{rmk:filtered_cc} we obtain the persistence module
$HF{(L_0,L_1)}^\bullet$.
Note that the filtration does not depend on the choice of almost-complex structure.
Thus, we can distill all of our setup into the following definition:
\begin{definition}
    Let $(M, \omega = d\lambda)$ be a Liouville domain and $L_0,L_1 \subset M$ compact exact Lagrangian submanifolds with fixed primitives for $\lambda$ as above. 
    Further, let $\mathcal{D} = (H,J)$ be a regular Floer datum as described above. 
    Then we denote by $HF{(L_0,L_1; \mathcal{D})}^\bullet$ the Floer persistence module associated to this data. We further denote its barcode by $\mathcal{B}(L_0,L_1; \mathcal{D})$.
\end{definition}
\begin{remark}
    {Our Lagrangians $L_0, L_1 \subset M$ are also not necessarily closed. 
    However, we do not work with a wrapped non-compact flavor of Floer theory. 
    Recall that for a Floer datum 
    $\mathcal{D} = (H,J)$ to be regular, $\phi_H(L_0) \cap L_1 \cap \partial M = \emptyset$ must hold.
    Since by assumption $L_0$ and $L_1$ do not intersect in a sufficiently small neighborhood of the boundary, 
    this implies that $\phi_H(L_0) \cap L_1$ is a finite set which is itself contained in a 
    compact part of the interior of $M$.
    Since $H$ is also assumed to be compactly supported (inside the interior of $M$), 
    everything of Floer-theoretic interest takes place in a compact part of $M$ and away from the boundary.}
    {Furthermore, all of this not relevant for the proof of Theorem~\ref{thm:main_1}.
    Here all Lagrangians are closed since they are Lagrangian spheres.
    To show Corollary~\ref{cor:main_1} and Corollary~\ref{cor:main_2},
    we need to allow $M = A_2^n$ or $M = T^*S^n$ as the ambient manifold. 
    These are Weinstein domains which can be obtained from completing a disk cotangent bundle $D^*S^n$ and the plumbing $D^*S^n \# D^*S^n$
    of two disk cotangent bundles respectively.
    We only need the Floer homology $HF(L,L')$, where $L$ and $L'$ are images of fibers under Dehn twists or compactly supported Hamiltonian diffeomorphisms.
    For the cotangent bundle, this is exactly the setting of~\cite{seidel-2004}, and we also refer the reader to this paper and the references therein.
    As is standard in this kind of non-compact setting, an appropriate (generic) choice of almost complex structure is made to ensure 
    that pseudo-holomorpic curves do not escape the compact part of the manifold.
    See e.g.~\cite[Theorem 2.1]{oh-2001}
    for details.
    }
\end{remark}
\subsection{Filtered Continuation and Naturality Maps}\label{sec:naturality}
As noted above for two regular Floer data $\mathcal{D}_0 = (H^0,J^0), \mathcal{D}_1 = (H^1,J^1)$ the Floer homology $HF(L_0,L_1)$ does not depend on the Floer datum used to define it.
This is formalized by the existence of a chain map $\Psi_{\mathcal{D}_0,\mathcal{D}_1}: CF(L_0,L_1; \mathcal{D}_0) \longto CF(L_0,L_1; \mathcal{D}_1)$.
This is a quasi-isomorphism canonical up to chain homotopy and induces a (canonical) isomorphism on homology:
\[
    H(\Psi_{\mathcal{D}_0,\mathcal{D}_1}): HF(L_0,L_1; \mathcal{D}_0) \longto HF(L_0,L_1; \mathcal{D}_1).
\]
The chain map $\Psi_{\mathcal{D}_0,\mathcal{D}_1}: CF(L_0,L_1; \mathcal{D}_0) \longto CF(L_0,L_1; \mathcal{D}_1)$ is called \textit{continuation map}.
This construction is by now standard.
A filtration aware version of the continuation map is given in~\cite[Prop. 6.1]{usher-2013}. 
Detailed computations of the energies involved are given in~\cite{fukaya-oh-ohta-ono-2011}. 
\begin{lemma}\label{lem:continuation_action}
    Let $\mathcal{D}_0 = (G,J)$ and $\mathcal{D}_1 = (H,J')$ be two regular Floer data.
    Then we can choose a homotopy between $\mathcal{D}_0$ and $\mathcal{D}_1$ such that the continuation maps restrict to
    \begin{align*}
        \Psi_{\mathcal{D}_0,\mathcal{D}_1}: CF^{\lambda}(L_0,L_1; \mathcal{D}_0) &\longto CF^{\lambda + \osc(G-H)}(L_0,L_1; \mathcal{D}_1) \\
        \Psi_{\mathcal{D}_1,\mathcal{D}_0}: CF^{\lambda}(L_0,L_1; \mathcal{D}_1) &\longto CF^{\lambda + \osc(G-H)}(L_0,L_1; \mathcal{D}_0)
    \end{align*}
    for any $\lambda \in \Reals$ when taking into account the filtration.
\end{lemma}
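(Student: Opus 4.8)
The plan is to reduce the statement to the standard energy estimate for continuation maps, being careful to match the action convention of Definition~\ref{def:action} and to exploit that the Hamiltonians are supported in a compact subset of $\inter(M)$. Fix a smooth nondecreasing cutoff $\beta\colon\Reals\to[0,1]$ with $\beta\equiv 0$ near $-\infty$, $\beta\equiv 1$ near $+\infty$, and interpolate the two Floer data by $H^s\coloneqq(1-\beta(s))G+\beta(s)H$ together with a generic path $J^s$ of $\omega$-compatible almost complex structures from $J$ to $J'$. Counting rigid finite-energy solutions of the $s$-dependent Floer equation with asymptotics in $\Chords(L_0,L_1;G)$ at $-\infty$ and in $\Chords(L_0,L_1;H)$ at $+\infty$ defines the continuation map $\Psi_{\mathcal D_0,\mathcal D_1}$; generic $J^s$ gives transversality of the continuation moduli spaces, while exactness of $L_0,L_1$ and the compact support of the $H^s$ rule out bubbling and escape to $\partial M$, so that Gromov--Floer compactness applies. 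Note $H^s_t$ is constant (zero) outside a fixed compact subset of $\inter(M)$, so this is an admissible homotopy, and that $\Psi_{\mathcal D_0,\mathcal D_1}$ is canonical up to chain homotopy as recalled above.

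The key computation is the energy identity: for a finite-energy solution $u$ of the $s$-dependent equation from $y_0$ to $y_1$, differentiating $s\mapsto\A_{H^s}(u(s,\cdot))$ and integrating over $\Reals$ yields
\[
0\le E(u)=\A_G(y_0)-\A_H(y_1)+\int_{-\infty}^{\infty}\!\!\int_0^1 (\partial_s H^s)(t,u(s,t))\,dt\,ds,
\]
which refines the identity $E(u)=\A(y_0)-\A(y_1)$ recalled above, the extra term arising from the explicit $s$-dependence of the Hamiltonian; the relevant energy bookkeeping is carried out in~\cite{fukaya-oh-ohta-ono-2011}. Since $\partial_s H^s=\beta'(s)(H-G)$ with $\beta'\ge 0$ and $\int_{-\infty}^\infty\beta'=1$, the last term is at most $\int_0^1\max_{x\in M}(H-G)(t,x)\,dt$. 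Here I would invoke the normalization: because $G_t$ and $H_t$ are supported in a compact subset of $\inter(M)$, the function $(G-H)(t,\cdot)$ vanishes somewhere for every $t$, so $\min_x(G-H)(t,x)\le 0\le\max_x(G-H)(t,x)$; integrating in $t$ gives
\[
\int_0^1\max_x(H-G)(t,x)\,dt=-\int_0^1\min_x(G-H)(t,x)\,dt\le\int_0^1\bigl[\max_x(G-H)(t,x)-\min_x(G-H)(t,x)\bigr]\,dt=\osc(G-H).
\]
Hence every continuation solution satisfies $\A_H(y_1)\le\A_G(y_0)+\osc(G-H)$.

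The conclusion is then immediate: if $y_0$ has $\A_G(y_0)<\lambda$ and $y_1$ occurs in $\Psi_{\mathcal D_0,\mathcal D_1}(y_0)$, there is a continuation solution from $y_0$ to $y_1$, so $\A_H(y_1)<\lambda+\osc(G-H)$; by linearity and the definition of $\A$ on chains this gives $\Psi_{\mathcal D_0,\mathcal D_1}(CF^{\lambda}(L_0,L_1;\mathcal D_0))\subseteq CF^{\lambda+\osc(G-H)}(L_0,L_1;\mathcal D_1)$. For $\Psi_{\mathcal D_1,\mathcal D_0}$ one runs the identical argument with the linear homotopy from $H$ to $G$: the homotopy term is then bounded by $\int_0^1\max_x(G-H)(t,x)\,dt\le\osc(G-H)=\osc(H-G)$, using the normalization again (now in the form $\min_x(G-H)(t,x)\le 0$). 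I expect the only delicate points to be getting the sign of the homotopy term right relative to the action convention in Definition~\ref{def:action}, and the observation that a monotone homotopy a priori only yields the one-sided bound $\int_0^1\max_x(H-G)(t,x)\,dt$, which upgrades to the symmetric quantity $\osc(G-H)$ precisely because the Hamiltonians are compactly supported away from the boundary; beyond this there is no serious analytic obstacle, only the standard compactness and transversality package.
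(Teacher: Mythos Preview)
Your argument is correct and is precisely the standard energy estimate for continuation maps: linear homotopy, the identity $E(u)=\A_G(y_0)-\A_H(y_1)+\int\!\!\int \partial_s H^s$, and the observation that compact support of $G,H$ forces $(G-H)(t,\cdot)$ to vanish somewhere, upgrading the one-sided bound $\int_0^1\max_x(H-G)\,dt$ to $\osc(G-H)$. The paper does not actually give a proof of this lemma; it simply cites \cite[Prop.~6.1]{usher-2013} and \cite{fukaya-oh-ohta-ono-2011} for the filtered continuation map and the energy bookkeeping, and your write-up is exactly the argument those references contain. One small point: the paper only requires $H_t$ to be \emph{constant} outside a compact set, not literally zero, but since $M$ is a compact Liouville domain and Hamiltonians are assumed compactly supported away from $\partial M$, the constant is indeed zero and your normalization step goes through.
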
 
{
\begin{remark}
    Note again that all Hamiltonians are assumed to be compactly supported away from the boundary. 
    This is also true for all Hamiltonian isotopies here.
    If that was not the case, the above statement would not be true and 
    the continuation map would only be a quasi-isomorphism for specific Hamiltonians which grow linearly at infinity with certain slopes.
    The interested reader is referred to~\cite[Section 2.6]{gong-2023} for this setup.
\end{remark}}

Recall that in the setting of this lemma $\Psi_{\mathcal{D}_0,\mathcal{D}_1}\circ \Psi_{\mathcal{D}_1,\mathcal{D}_0}$ and $\Psi_{\mathcal{D}_1,\mathcal{D}_0}\circ \Psi_{\mathcal{D}_0,\mathcal{D}_1}$ are chain-isotopic to the identity. Thus, we obtain on homology that these two compositions are the same as the maps induced by the respective inclusions based on the action filtration.
These maps form an interleaving of the respective persistence modules.
By Theorem~\ref{thm:persistence_isometry} this implies a bound on the bottleneck distance, as follows:
\begin{corollary}
    Let $\mathcal{D}_0 = (G,J)$ and $\mathcal{D}_1 = (H,J')$ be two regular Floer data. Then the barcodes $\B(HF{(L_0,L_1; \mathcal{D}_0)}^\bullet)$ and $\B(HF{(L_0,L_1; \mathcal{D}_1)}^\bullet)$
    are $(\osc G-H)$-matched.
\end{corollary}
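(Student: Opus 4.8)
The proof is a formal consequence of Lemma~\ref{lem:continuation_action} together with the isometry theorem (Theorem~\ref{thm:persistence_isometry}). Write $\delta \coloneqq \osc(G-H)$. The plan is first to produce a $\delta$-interleaving of the two Floer persistence modules $HF(L_0,L_1;\mathcal{D}_0)^\bullet$ and $HF(L_0,L_1;\mathcal{D}_1)^\bullet$, and then to invoke the isometry theorem to convert this into a $\delta$-matching of the associated barcodes.

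For the first step I would apply Lemma~\ref{lem:continuation_action} to obtain, for every $\lambda \in \Reals$, filtered chain maps $\Psi_{\mathcal{D}_0,\mathcal{D}_1}: CF^\lambda(L_0,L_1;\mathcal{D}_0) \to CF^{\lambda+\delta}(L_0,L_1;\mathcal{D}_1)$ and $\Psi_{\mathcal{D}_1,\mathcal{D}_0}: CF^\lambda(L_0,L_1;\mathcal{D}_1) \to CF^{\lambda+\delta}(L_0,L_1;\mathcal{D}_0)$ that commute with the inclusions $i_{\lambda,\mu}$ of sublevel subcomplexes. Passing to homology and using the notation of Remark~\ref{rmk:filtered_cc}, these induce morphisms of persistence modules $F: HF(L_0,L_1;\mathcal{D}_0)^\bullet \to HF(L_0,L_1;\mathcal{D}_1)^\bullet[\delta]$ and $\tilde G: HF(L_0,L_1;\mathcal{D}_1)^\bullet \to HF(L_0,L_1;\mathcal{D}_0)^\bullet[\delta]$. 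To verify the interleaving relations of Definition~\ref{def:interleaving} one uses that $\Psi_{\mathcal{D}_1,\mathcal{D}_0}\circ\Psi_{\mathcal{D}_0,\mathcal{D}_1}$ is chain-homotopic to the identity and --- crucially --- that the homotopy can be arranged to raise the action filtration by at most $2\delta$; this is part of the filtered continuation-map package of~\cite{usher-2013} and follows from the energy estimates of~\cite{fukaya-oh-ohta-ono-2011} applied to the homotopy of homotopies. Hence on homology $\tilde G[\delta]\circ F$ agrees with the canonical morphism $\pi_{t,t+2\delta}: HF(L_0,L_1;\mathcal{D}_0)^\bullet \to HF(L_0,L_1;\mathcal{D}_0)^\bullet[2\delta]$, and symmetrically for the other composition, so $(F,\tilde G)$ is a $\delta$-interleaving. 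In particular $d_{\inter}(HF(L_0,L_1;\mathcal{D}_0)^\bullet, HF(L_0,L_1;\mathcal{D}_1)^\bullet) \le \delta$.

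For the second step, Theorem~\ref{thm:persistence_isometry} gives $d_{\bottle}(\B(HF(L_0,L_1;\mathcal{D}_0)^\bullet), \B(HF(L_0,L_1;\mathcal{D}_1)^\bullet)) = d_{\inter}(\cdots) \le \delta$. Since the spectra of the Floer persistence modules are discrete and bounded below (Definition~\ref{def:persistence_module}(3)), only finitely many bars have an endpoint in any bounded region, so the infimum defining $d_{\bottle}$ is realized and there is an honest $\delta$-matching in the sense of Definition~\ref{def:delta_matching}; this is exactly the assertion that the two barcodes are $(\osc(G-H))$-matched.

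The only non-formal ingredient is the filtration behaviour of the homotopy witnessing $\Psi_{\mathcal{D}_1,\mathcal{D}_0}\circ\Psi_{\mathcal{D}_0,\mathcal{D}_1}\simeq\id$: one must know it is a $2\delta$-shifted homotopy rather than merely an unfiltered one, so that one obtains a genuine $\delta$-interleaving and not just an approximate statement. I expect this to be the main point to justify (or to cite precisely from~\cite{usher-2013, fukaya-oh-ohta-ono-2011}); everything else is bookkeeping with the definitions of Section~\ref{sec:persistence}.
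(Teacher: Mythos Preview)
Your proposal is correct and follows essentially the same approach as the paper: the paper derives the corollary directly from Lemma~\ref{lem:continuation_action} by noting that the filtered continuation maps and their compositions (which agree on homology with the inclusion-induced maps) form a $\delta$-interleaving, and then invokes Theorem~\ref{thm:persistence_isometry}. Your added remarks about the filtered chain homotopy and the realization of the infimum are valid refinements that the paper leaves implicit.
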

Next, we want to look at another chain map called the \textit{naturality map}.
We only give a brief description, for more information on the filtered naturality map we refer the reader to~\cite[Section 2.2.3]{biran-cornea-2022}.
For this let $\mathcal{D} = (H,J)$ be a regular Floer datum and $G$ another Hamiltonian.
Denote by $\phi^t_G$ the time-$t$ flow of the Hamiltonian vector field $X_G$.
For any two Hamiltonians $F_1,F_2: [0,1] \times M \to \Reals$ we denote by $F_1\sharp F_2: [0,1] \times M \to \Reals$ the map
\[F_1 \sharp F_2 (t,x) \coloneqq F_1(t,x) + F_2(t, {(\phi^{t}_{F_1})}^{-1}(x)).\]
Then there is a natural correspondence between $\Chords(L_0,L_1; H)$ and $\Chords(L_0,\phi_G^1(L_1); G \sharp H)$ given by the following map 
\begin{align*}
    \Chords(L_0,L_1; H) &\to \Chords(L_0,\phi_G^1(L_1); G\sharp H) \\
    y &\mapsto (t \mapsto \phi^t_G(y(t))).
\end{align*}
In order to upgrade this to a chain map, we introduce the \textit{push-forward Floer datum}
\[{(\phi_G)}_* \mathcal{D} = (G \sharp H, t \mapsto D\phi_G^t \circ J_t \circ D{(\phi^{t}_G)}^{-1}).\]
Note that the Hamiltonian chords $\Chords(L_0,\phi^1(L_1); G \sharp H)$ are generators of the Floer complex $CF(L_0,\phi_G^1(L_1); {(\phi_G)}_*\mathcal{D})$.
The chain map defined by linear extension of
\begin{align*}
    \mathcal{N}_G: CF(L_0,L_1; \mathcal{D}) &\to CF(L_0,\phi_G^1(L_1); {(\phi_G)}_*\mathcal{D}) \\
    y &\mapsto (t \mapsto \phi^t_G(y(t))),
\end{align*}
is called the \textit{naturality map}.
{This map is an isomorphism on the chain-level. For a proof see e.g.~\cite[Section 2.2.2]{leclercq-2008}. Thus, it also induces an isomorphism on homology: }
\[H(\mathcal{N}_G): HF(L_0,L_1; \mathcal{D}) \to HF(L_0,\phi_G^1(L_1); {(\phi_G)}_*\mathcal{D}).\]
Note that in order to define the filtration on the right-hand side, we have to make a specific choice of primitive for $\lambda$ on $\phi_G^1(L_1)$.
However, all such choices agree up to adding a constant since $L_1$ is connected.
{By choosing the primitive 
\[h_{\phi_G^1(L_1)}(\phi_G^1(x)) = h_{L_1}(x) + \int_0^1 \lambda(\partial_t\phi^t_G(x)) - G(t, \phi^t_G(x)) dt \]
for $\lambda$ on $\phi_G^1(L_1)$,
this map can be made action preserving.
Note that since all primitives agree up to addition by a constant, another choice of primitive merely shifts the filtration.
Thus, any results about the length of bars holds regardless of the chosen primitives.}

We can now combine naturality and continuation maps to obtain a useful result on Floer barcodes.
For this let $L_0,L_1 \subset M$ be exact Lagrangians as always and further assume that $L_0 \trans L_1$.
Let $H \in C^\infty([0,1]\times M, \Reals)$ be a Hamiltonian such that $\phi_H(L_1) \trans L_0$, where $\phi_H$ is the time-1 flow of $H$.
Then we can look at the following composition: 
\begin{center}
    \begin{tikzcd}
        {CF(L_0,L_1; 0)} \arrow[rr, "{\Psi_{0,\overline{H}}}"] &  & {CF(L_0,L_1; \overline{H})} \arrow[rr, "\mathcal{N}_H"] &  & {CF(L_0,\phi_H(L_1); 0)},
    \end{tikzcd}
\end{center}
where $\overline{H}(t,x) \coloneqq -H(t,\phi_H^t(x))$.
Here (and henceforth)
we drop the almost-complex structures from the notation for continuation maps for the sake of readability:
On homology this \textit{per se} just gives us the standard fact that Floer homology is invariant under Hamiltonian isotopies.
Now, by using Lemma~\ref{lem:continuation_action} we obtain the following filtered result:
for any $\lambda \in \Reals$ we have the following
\begin{center}
    \begin{tikzcd}
        {CF^\lambda(L_0,L_1; 0)} \arrow[r, "{\Psi_{0,\overline{H}}}"] & {CF^{\lambda+\osc(H)}(L_0,L_1; \overline{H})} \arrow[d, "\mathcal{N}_H", out = 0, in = 180] & \\
        & {CF^{\lambda+\osc(H)}(L_0,\phi_H(L_1); 0)} \arrow[d, "\mathcal{N}_{\overline{H}}", out = 0, in = 180] & \\
        & {CF^{\lambda+\osc(H)}(L_0,L_1; \overline{H})} \arrow[r, "{\Psi_{\overline{H},0}}"] & {CF^{\lambda+2\osc(H)}(L_0,L_1; 0)}.
    \end{tikzcd}
\end{center}
Since this composition induces the map on homology that is induced by the natural inclusion map, we again obtain an interleaving of persistence modules. To summarize:
\begin{lemma}\label{lem:continuation_interleaving} 
    Let $L_0 \trans L_1$ and let $H$ be a compactly supported Hamiltonian such that $\phi_H(L_1) \trans L_0$.
    Then the barcodes
    $\B(HF{(L_0,L_1; 0)}^\bullet)$ and $\B(HF{(L_0,\phi_H(L_1); 0)}^\bullet)$
    are $(\osc H)$-matched.
\end{lemma}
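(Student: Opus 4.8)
The plan is to recognise the two compositions drawn in the diagram above as the two halves of an $(\osc H)$-interleaving of the persistence modules $V \coloneqq HF{(L_0,L_1; 0)}^\bullet$ and $W \coloneqq HF{(L_0,\phi_H(L_1); 0)}^\bullet$, and then to feed this into the isometry theorem (Theorem~\ref{thm:persistence_isometry}) to obtain the claimed bound on the bottleneck distance. The transversality hypotheses $L_0 \trans L_1$ and $L_0 \trans \phi_H(L_1)$ ensure that no Hamiltonian perturbation is needed, so after fixing regular almost-complex structures both $V$ and $W$ are defined from genuine (unperturbed) Floer data.

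First I would record the elementary facts about $\overline H(t,x) = -H(t,\phi_H^t(x))$ that make the diagram work: it generates the Hamiltonian path $t \mapsto (\phi_H^t)^{-1}$, one has $H \sharp \overline H \equiv 0$ and $\overline H \sharp 0 = \overline H$, and, since $\phi_H^t$ is a diffeomorphism at each time, $\osc(\overline H) = \osc(H)$. Consequently $\mathcal{N}_H$ is a chain isomorphism $CF(L_0,L_1;\overline H) \xrightarrow{\sim} CF(L_0,\phi_H(L_1);0)$ and $\mathcal{N}_{\overline H}$ is a chain isomorphism $CF(L_0,\phi_H(L_1);0) \xrightarrow{\sim} CF(L_0,L_1;\overline H)$ (up to a filtration-preserving continuation in the almost-complex structure, which I suppress as usual), and both are action-preserving after the appropriate constant shift of the primitive of $\lambda$ on $\phi_H(L_1)$. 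Meanwhile Lemma~\ref{lem:continuation_action}, applied with the homotopy provided there, supplies filtered continuation maps $\Psi_{0,\overline H}$ and $\Psi_{\overline H,0}$ each shifting the action filtration by $\osc(\overline H) = \osc(H)$. I then set $F\colon V \to W[\osc H]$ to be the morphism of persistence modules induced by $\mathcal{N}_H \circ \Psi_{0,\overline H}$ and $G\colon W \to V[\osc H]$ to be the one induced by $\Psi_{\overline H,0}\circ\mathcal{N}_{\overline H}$; by the previous remarks both are genuine morphisms of persistence modules.

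It then remains to check that $G[\osc H]\circ F$ and $F[\osc H]\circ G$ are the canonical morphisms $V \to V[2\osc H]$ and $W \to W[2\osc H]$. On the chain level, $\phi_{\overline H}^t = (\phi_H^t)^{-1}$ gives $\mathcal{N}_{\overline H}\circ\mathcal{N}_H = \id$ and $\mathcal{N}_H \circ \mathcal{N}_{\overline H} = \id$, so both compositions reduce to $\Psi_{\overline H,0}\circ\Psi_{0,\overline H}$ and $\Psi_{0,\overline H}\circ\Psi_{\overline H,0}$, which are chain homotopic to the respective identities by the standard composition property of continuation maps. Since these compositions of filtered chain maps land in the $2\osc H$-shifted subcomplexes, the induced maps on homology are precisely the canonical morphisms into the $2\osc H$-shifted persistence modules; hence $(F,G)$ is an $(\osc H)$-interleaving in the sense of Definition~\ref{def:interleaving}, so $d_{\inter}(V,W) \le \osc H$, and Theorem~\ref{thm:persistence_isometry} gives $d_{\bottle}(\B(V),\B(W)) \le \osc H$, i.e.\ the two barcodes are $(\osc H)$-matched. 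I expect the only genuine fuss to be the bookkeeping already alluded to in Section~\ref{sec:naturality} — choosing the primitive on $\phi_H(L_1)$ so that the naturality maps are action-preserving, and choosing the continuation homotopies in Lemma~\ref{lem:continuation_action} compatibly for the forward and backward directions — none of which is deep; the substance of the argument is simply that the displayed diagram commutes up to chain homotopy.
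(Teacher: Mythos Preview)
Your proposal is correct and follows essentially the same approach as the paper: construct the two halves of an $(\osc H)$-interleaving by composing the filtered continuation maps of Lemma~\ref{lem:continuation_action} with the action-preserving naturality maps $\mathcal{N}_H$ and $\mathcal{N}_{\overline H}$, observe that the round-trip compositions are chain homotopic to the inclusions, and then invoke the isometry theorem. The only minor imprecision is that $F[\osc H]\circ G$ is $\mathcal{N}_H\circ(\Psi_{0,\overline H}\circ\Psi_{\overline H,0})\circ\mathcal{N}_{\overline H}$ rather than literally $\Psi_{0,\overline H}\circ\Psi_{\overline H,0}$, but since the naturality maps are filtration-preserving chain isomorphisms this does not affect the conclusion.
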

This implies the following 
Lipschitz properties of the boundary depth and the spectral invariants. This result is due to Leclercq for spectral invariants, see~\cite{leclercq-2008}, and Usher for the boundary
depth, see~\cite{usher-2013}.
{\begin{corollary}\label{cor:lipschitz}
    Let $L_0 \trans L_1$ and let $\phi$ be a compactly supported Hamiltonian diffeomorphism such that $\phi(L_1) \trans L_0$.
    Then \[\abs{\beta(HF{(L_0,L_1; 0)}^\bullet) - \beta(HF{(L_0,\phi(L_1); 0)}^\bullet)} \leq 2\cdot \norm{\phi}_{\Hof},\]
    where $\beta$ denotes the boundary depth of a persistence module.
    For any $\alpha \in HF(L_0,L_1)$ we have \[\abs{c(\alpha;HF{(L_0,L_1; 0)}^\bullet) -  c(\phi^*\alpha; HF{(L_0,\phi(L_1); 0)}^\bullet)} \leq \norm{\phi}_{\Hof},\]
    where $c$ denotes the spectral invariant of a class in the terminal vector space of a persistence module.
\end{corollary}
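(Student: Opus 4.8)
The plan is to read off both inequalities directly from the $(\osc H)$-matching furnished by Lemma~\ref{lem:continuation_interleaving}, and then to optimize over the Hamiltonians generating $\phi$. Note first that the extra transversality hypothesis of Lemma~\ref{lem:continuation_interleaving}, namely $\phi_H(L_1)\trans L_0$, is exactly the assumption $\phi(L_1)\trans L_0$ made here and is independent of which $H\in\mathcal{H}(\phi)$ we choose; hence Lemma~\ref{lem:continuation_interleaving} is available with $\delta=\osc(H)$ for every such $H$, and it suffices to prove the bounds with $\norm{\phi}_{\Hof}$ replaced by $\osc(H)$ and then pass to the infimum over $H\in\mathcal{H}(\phi)$.

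For the boundary depth I would fix $H\in\mathcal{H}(\phi)$ and a $\delta$-matching of $\B(HF(L_0,L_1;0)^\bullet)$ and $\B(HF(L_0,\phi(L_1);0)^\bullet)$ with $\delta=\osc(H)$. The key observation is that a finite bar can only be matched to a finite bar, since matching a finite right endpoint $b$ to $+\infty$ would violate $\abs{b-d}\le\delta$ in Definition~\ref{def:delta_matching}. Consequently, if $\interval[open left]{a}{b}$ is a longest finite bar of the first barcode, then either it is matched to a finite bar $\interval[open left]{c}{d}$ of the second with $d-c\ge (b-a)-2\delta$, or it is unmatched and $b-a\le 2\delta$; in both cases $\beta(HF(L_0,L_1;0)^\bullet)\le \beta(HF(L_0,\phi(L_1);0)^\bullet)+2\delta$ (using $\beta\ge 0$ in the unmatched case), and the reverse inequality follows by exchanging the roles of the two barcodes. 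Taking the infimum of $2\osc(H)$ over $H\in\mathcal{H}(\phi)$ gives the bound $2\norm{\phi}_{\Hof}$. (Equivalently, one may invoke Theorem~\ref{thm:persistence_isometry} and bound $\beta$ by twice the bottleneck, hence twice the interleaving, distance.)

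For the spectral invariants the matching alone is not enough, since it does not record which infinite bar represents $\alpha$ versus $\phi^*\alpha$; I would instead use the interleaving morphisms themselves. Write $V=HF(L_0,L_1;0)^\bullet$ and $W=HF(L_0,\phi(L_1);0)^\bullet$. Unwinding Section~\ref{sec:naturality}, the interleaving pair consists of $F=H(\mathcal{N}_H)\circ H(\Psi_{0,\overline H}):V\to W[\delta]$ (with $\delta=\osc(H)=\osc(\overline H)$, using Lemma~\ref{lem:continuation_action} and the action-preserving normalization of $\mathcal{N}_H$) together with the symmetric morphism $G:W\to V[\delta]$, whose induced maps $F_\infty,G_\infty$ on the terminal vector spaces $V_\infty=HF(L_0,L_1)$ and $W_\infty=HF(L_0,\phi(L_1))$ are the naturality isomorphism $\phi^*$ and its inverse. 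From the commuting square defining a morphism of persistence modules, $\alpha\in\im(V_t\to V_\infty)$ forces $\phi^*\alpha=F_\infty(\alpha)\in\im(W_{t+\delta}\to W_\infty)$, hence $c(\phi^*\alpha)\le c(\alpha)+\delta$; the same reasoning applied to $G$, combined with $G_\infty\circ F_\infty=\id$, gives $c(\alpha)\le c(\phi^*\alpha)+\delta$. Therefore $\abs{c(\alpha)-c(\phi^*\alpha)}\le\osc(H)$ for every $H\in\mathcal{H}(\phi)$, and the infimum yields $\abs{c(\alpha)-c(\phi^*\alpha)}\le\norm{\phi}_{\Hof}$.

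I expect the only delicate point to be bookkeeping rather than analysis: one must confirm that the composition $H(\mathcal{N}_H)\circ H(\Psi_{0,\overline H})$ really induces $\phi^*$ on the terminal spaces (so that the quantity $c(\phi^*\alpha)$ in the statement is precisely the one being estimated) and that the primitive of $\lambda$ on $\phi(L_1)$ has been shifted to make the naturality map action-preserving, as noted after Lemma~\ref{lem:continuation_action}. Once these normalizations are fixed, both inequalities are formal consequences of the definitions of $\beta$ and $c$ together with the $\delta$-matching of Lemma~\ref{lem:continuation_interleaving}.
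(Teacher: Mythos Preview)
Your proposal is correct and follows precisely the approach the paper intends: the corollary is stated immediately after Lemma~\ref{lem:continuation_interleaving} with the phrase ``This implies the following well-known Lipschitz properties,'' and no further proof is given. You have supplied exactly the standard argument the paper leaves to the reader, deriving the boundary-depth bound from the $\delta$-matching and the spectral-invariant bound from the underlying interleaving morphisms, then passing to the infimum over $H\in\mathcal{H}(\phi)$.
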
}
\begin{remark}
    In the following sections, we will drop the superscript $\bullet$ from the notation for Floer theoretic spectral invariants and the boundary depth.
    This is as not to clutter the notation needlessly.
\end{remark}

\section{Setup}\label{sec:setup}
We will now define local models for the Hamiltonian diffeomorphisms and Dehn twists which we will later need for the proof of Theorem~\ref{thm:main_1}.
Henceforth, we will always consider the sphere $S^n$ endowed with its standard round metric.
{In this section we will only look at the local model for an $A_3$-configuration.
Recall that $L_0, L_1, L_2 \subset M$ are in an $A_3$-configuration if $L_0 \cap L_2 = \emptyset$,
and $L_1 \cap L_0$ and $L_1 \cap L_2$ each contain exactly one transverse intersection point.
Locally around $L_1$, this can be identified with 
the cotangent bundle of $S^n$ with two preferred fibers. 
These two fibers correspond to $L_0$ and $L_2$ near their intersection points with $L_1$.
The zero section of the cotangent bundle corresponds to $L_1$.
This is what we refer to as the \textit{local model}.}
We will assume throughout that $n \geq 2$.
The construction will translate verbatim to the neighborhood of an $A_3$-configuration of spheres in a Liouville domain $(M, \omega = d\lambda)$.

{We will now work with the local model.
We set $U_I \coloneqq \{\xi \in T^*S^n \vert \norm{\xi^\sharp} \in I\}$. Recall that we set $L_0 \coloneqq T_x^*S^n ,L_2 \coloneqq T_y^*S^n$, where 
$x,y \in S^n$ are two distinct points which are not antipodal.}
We also denote $L_1 \coloneqq o_{S^n} \subset D^*S^n$ and set $\delta \coloneqq d(x,y)$. 
{This slightly unconventional notation is used to emphasize that we are thinking of an $A_3$-configuration in the background.}
Choose ${0 < \hat{h}_0 < \check{h}_1 < \hat{h}_1 < \cdots < \check{h}_d < \hat{h}_d < \check{h}_{d+1} < 1}$.
Then set $\hbar \coloneqq \min_{i \in \{1,\dots,d\}} \abs{\check{h}_i - \hat{h}_i}$ and
choose $\iota \in \interval[open]{0}{\frac{\hbar}{2}}$.
Let $\theta \in C^\infty(\Reals,\interval[open right]{0}{\infty})$ be a smooth bump function such that:
\begin{enumerate}
    \item the support of $\theta$ is $\interval{\iota}{\hbar-\iota}$;
    \item $\theta(\frac{\hbar}{2}) = 2\pi$ is a global maximum;
    \item $\theta'\vert_{\interval[open]{\iota}{\frac{\hbar}{2}}} > 0$ and $\theta'\vert_{\interval[open]{\frac{\hbar}{2}}{\hbar-\iota}} < 0$;
    \item $\int_{\Reals} \theta(t) dt = 1$.
\end{enumerate} 
Then we define $\theta_v$ as follows for any $v \in \Reals^d$:
\[\theta_v(t) = \sum_{i=1}^d v_i \cdot\theta(t - \check{h}_i)\]
{Later on we will use the shorthand $\theta_i(\cdot) \coloneqq \theta(\cdot - \check{h}_i)$.}
The flow of the autonomous Hamiltonian 
\begin{align*}
    H_v: T^*S^n &\to \Reals  \\
    (x,\xi_x) &\mapsto \int_0^{\norm{\xi_x^\sharp}} \theta_v(t) dt,
\end{align*}
will be denoted by $\phi_v$. This defines a map $\Reals^d \to \Ham_c(T^*S^n)$ by $v \mapsto \phi_v$.
{
    We will now give an interpretation of this map in terms of the (co-)geodesic flow on $S^n$.
    For this let 
\[\sigma(t): T^*S^n\setminus o_{S^n} \to T^*S^n\setminus o_{S^n}\]
be the time-$t$
normalized cogeodesic flow.
This flow, closely related to the geodesic flow, is defined outside the zero section.
Recall, that the geodesic flow transports every cotangent vector by parallel transport along the geodesic 
associated to the tangent vector which corresponds to it under the musical isomorphism $\sharp: T^*S^n \to TS^n$.
Similarly, the flow $\sigma(t)$ transports cotangent vectors along the respective unit-speed geodesics for time $t$.
This action is of course only defined outside the zero section, which is preserved by the (unnormalized) geodesic flow.
Note that since all geodesics on the sphere are $2\pi$-periodic, the same is true for the normalized cogeodesic flow, i.e. $\sigma(t) = \sigma(t+2\pi)$ for any $t \in \Reals$.
Also note that $\sigma(\pi)$ is the antipodal map.
With these conventions, we have $\phi_v(\xi) = \sigma(\theta_v(\norm{\xi^\sharp}))(\xi)$ for any $\xi \not\in o_{S^n}$.
This means that outside the zero section, $\phi_v(\xi)$ is the cotangent vector that we obtain by transporting $\xi\not\in o_{S^n}$ along a geodesic with initial condition $\norm{\xi^\sharp}^{-1}\xi^\sharp$
for time $\theta_v(\norm{\xi^\sharp})$.}

After defining this map $\Reals^d \to \Ham_c(T^*S^n)$, we construct special representatives of the Dehn twist.
These will later be used in the way sketched in Section~\ref{sec:intro_proof}.
Let $\eps = \min_{i \in \{0,\dots,d\}} \abs{\check{h}_{i+1} - \hat{h}_i}$.
Let $\rho: \Reals \to \Reals$ be a monotone smooth function that is equal to $\pi$ on $\interval[open left]{-\infty}{\frac{1}{3}\eps}$ and equal to $0$ on $\interval[open right]{\frac{1}{2}\eps}{\infty}$.
We define for any $i \in \{0,\dots,d\}$ that 
\[\rho_i(t) \coloneqq \rho(t-\hat{h}_i).\]
Then we define the following models of a Dehn twist around $L_1$:
\begin{align*}
    \tau_i: D^*S^n &\to D^*S^n \\
    (x,\xi) &\mapsto \sigma(\rho_i(\norm{\xi^\sharp}))(x,\xi).
\end{align*}
{A priori, this map is only defined outside the zero section. However, we can extend it to the zero section. Recall that $\sigma(\pi)$ is the antipodal map. Since $\rho_i$ is equal to $\pi$ in a neighborhood of $0$, we can extend $\tau_i$ 
to the zero section via the antipodal map and the resulting map is still smooth and an exact symplectomorphism.
Note that this construction is essentially that given by Seidel in~\cite[Sec. 5a]{seidel-2000}.
}

By construction the supports of $\phi_v, \tau_0^2, \dots, \tau_d^2$ are pairwise disjoint for any $v \in \Reals^d$.
To be more precise, 
\begin{align*}
    \supp \phi_v &\subset U_{\interval[open]{\check{h}_1}{\hat{h}_1}} \cup \cdots \cup U_{\interval[open]{\check{h}_d}{\hat{h}_d}} \\
    \supp \tau_i^2 &\subset U_{\interval[open]{\hat{h}_i}{\check{h}_{i+1}}}
\end{align*}
for any $v \in \Reals^d$ and $i \in \{0,\dots,d\}$.
For an $A_3$-configuration $L_0,L_1,L_2 \subset M$ in an exact symplectic manifold $(M, d\lambda = \omega)$, there is a Weinstein neighborhood $U \subset M$ 
of $L_1$ which is symplectomorphic to {a disk cotangent bundle $D_r^*L_1$ for some small radius $r >0$}.
Further, by making $U$ smaller if necessary, we can assume that the intersections of $L_0$ and $L_2$ with $U$ correspond to fibers under this identification.
After rescaling, the construction above can be carried out in $U$.
We denote the obvious extensions of the maps constructed above by $\phi_v, \tau_0^2, \dots, \tau_d^2: M \to M$. 
Note that $v \mapsto \phi_v$ gives a group homomorphism $(\Reals^d,+) \to  (\Ham_c(M), \circ)$, which follows easily from the definition of the autonomous Hamiltonian $H_v$.

\begin{remark}
    {
    Note that the local model we have introduced here is only a computational device.
    The identification of a neighborhood of $L_1$ with a neighborhood of the zero section in $T^*S^n$
    is used in the definition of $\tau_0,\dots,\tau_d$ and $\phi_v$ in an essential way.
    These maps can be best understood in terms of the geodesic flow on the sphere, and the intersection points $\tau_i^{2k}(L_0) \cap \phi_v(L_2)$ we will 
    study later can be understood by looking at geodesics on $S^n$.
    This is what we use the local model for.
    However, we do not study the local model as a standalone setting and the Floer theory we use is always defined with respect to the appropriate ambient manifold, i.e. $M$ for Theorem~\ref{thm:main_1},
     $A_2^n$ for Corollary~\ref{cor:main_1} and $T^*S^n$ for Corollary~\ref{cor:main_2}.}
\end{remark}

\section{Action computation}\label{sec:action_comp}
In the following, we will restrict ourselves to the case $n \geq 2$\footnote{For $n=1$ the proof would have to be considerably adapted. However, this case is already implied by~\cite[Theorem 1.5]{zapolsky-2013}.}.
In this section fix $d \in \Nats$ and let $(M^{2n}, d\lambda=\omega)$ be a Liouville domain with $2c_1(M)=0$.
Further, we assume that $L_0,L_1,L_2 \subset M$ form an $A_3$-configuration of exact Lagrangian spheres.
We further want $L_1$ to be a framed sphere in the 
sense of~\cite[Sec. III.16a]{seidel-2008}.
Thus, we equip $L_1$ with an auxiliary choice of diffeomorphism $v: S^n \to L_1$ from the standard unit sphere in $\Reals^{n+1}$ to $L_1$.
We call $v$ the \textit{framing} of $L_1$.
{This diffeomorphism is an auxiliary choice, which is used in the proof later on. Our results about Hofer geometry are independent 
of this choice.}
Let $U$ be a neighborhood of $L_1$ which is symplectomorphic to a disk cotangent bundle $D_r^*L_1 \isom D_r^*S^n$.
The latter symplectomorphism is induced by the framing $v$.
By a slight perturbation of the framing if necessary, we can assume that $L_0 \cap U$ and $L_2 \cap U$ are identified locally 
with fibers over non-antipodal points.

Alternatively, we allow $M = T^*S^n$ with $L_0,L_1,L_2$ as in the last section. This is the same as only considering the 
local neighborhood $U$ discussed above.
{We use this unconventional notation to emphasize that the proof can be reused verbatim to obtain Corollary~\ref{cor:main_2}}.

Denote the tautological 1-form on $U$ by $\lambda_{\taut}$.
Since the first cohomology of $U$ vanishes,
we can write $\lambda$ in $U$ as $\lambda = \lambda_{\taut} + df$ where $f \in C^\infty(U,\Reals)$.
Assume $h_{L_0}: L_0 \to \Reals$ and $h_{L_2}: L_2 \to \Reals$ are primitives for $\lambda$ on $L_0$ and $L_2$.
These induce primitives for $\lambda$ on $\tau^{2k}_i(L_0)$ and $\phi_v(L_2)$. After possibly adding a constant to the primitives, we can assume $h_{L_0}$ agrees with $f$ on $L_0 \cap U$ and $h_{L_2}$ agrees with $f$ on $L_2 \cap U$.
We can see the induced primitives explicitly inside $U$.
A standard computation (see~\cite[Proposition 9.3.1]{mcduff-salamon-1998}) gives us the following:
\begin{align*}
     h_{\phi_v(L_2)}(\phi_v(\xi)) &= f(\xi) + \int_0^1 \lambda(\partial_t\phi^t_{v}(\xi)) - H_v(\phi^t_{v}(\xi)) dt \\
     &= f(\xi) + \int_0^1 \lambda_{\taut}(\partial_t\phi^t_{v}(\xi)) + df(\partial_t\phi^t_{v}(\xi)) dt - \int_0^1 H_v(\phi^t_{v}(\xi)) dt  \\
     &= \theta_v(\norm{\xi^\sharp})\norm{\xi^\sharp} +f(\phi_v(\xi)) - \int_0^{\norm{\xi^\sharp}}\theta_v(s)ds,
\end{align*}
where $\xi \in U \cap L_2$ is arbitrary. The norm used above is that induced by the round metric on $L_1$.
This is preserved by the (reparametrized) cogeodesic flow and thus also by $\phi_v$.
We thus obtain 
\begin{equation}\label{eq:primitive_phi}
    h_{\phi_v(L_2)}(\xi) = \theta_v(\norm{\xi^\sharp})\norm{\xi^\sharp} +f(\xi)   - \int_0^{\norm{\xi^\sharp}}\theta_v(s)ds,
\end{equation}
for any $\xi \in U \cap \phi_v(L_2)$.
Similarly, for all $\xi \in U \cap \tau^{2k}_i(L_0)$ we obtain
\begin{equation}\label{eq:primitive_tau}
    h_{\tau^{2k}_i(L_0)}(\xi) = 2k\rho_i(\norm{\xi^\sharp})\norm{\xi^\sharp} + f(\xi) -2k\int_0^{\norm{\xi^\sharp}} \rho_i(s)ds.
\end{equation}
Note that, while $\tau_i^{2k}$
is not a Hamiltonian diffeomorphism, the restriction to $M \setminus L_1$ is Hamiltonian.
This follows from the definition of $\tau_i$ given in Section~\ref{sec:setup}.
Thus,~\cite[Proposition 9.3.1]{mcduff-salamon-1998} can be applied away from $L_1$.
Since $\tau_i^2$ is the identity in a neighborhood of $L_1$, $f\vert_{L_0}$ is a valid primitive for $\lambda$ on ${\tau^{2k}_i(L_0)}$ near $L_1$.
It is easy to see from the above formula that $h_{\tau^{2k}_i(L_0)}$ and $f$ agree near $L_1$ (i.e.\ near the zero section in the local model), since $\rho_i(\norm{\xi^\sharp}) = \pi$ holds for any $\xi$
with $\norm{\xi^\sharp} \leq \frac{1}{3}\varepsilon$.
Thus, we have established $h_{\tau^{2k}_i(L_0)}$ as a primitive for $\lambda$ on $\tau^{2k}_i(L_0) \cap U$.

\section{Index computation}\label{sec:index_comp}
We now want to fix gradings on all relevant Lagrangians. 
Recall that $2c_1(M) = 0$, which means that $M$ allows Lagrangians to be $\Z$-graded in principle.
Since $H^1(S^n) = 0$ for $n \geq 2$, all spheres involved can be graded and
different choices for such gradings will differ at most by an integer shift.
Let $\widetilde{L}_0,\widetilde{L}_2$ be any gradings of $L_0,L_2$.
Possibly after an integer shift, we can arrange for these gradings to agree with the canonical gradings of $L_0 \cap U$ and $L_2 \cap U$ induced by the vertical distribution on $U$.
By~\cite{seidel-2000} there is a grading on $\tau_i$ such that $\widetilde{\tau_i}(\widetilde{L}_j) = \widetilde{L}_j[1-n]$ near $L_1$ and 
$\widetilde{\tau_i}(\widetilde{L}_j) = \widetilde{L}_j$ near $\partial U$ for $j = 0,2$.
For aesthetic reasons we choose a slightly less conventional grading: Namely, we grade $\tau_i$ such that $\widetilde{\tau_i}(\widetilde{L}_j) = \widetilde{L}_j$ near $L_1$ and 
$\widetilde{\tau_i}(\widetilde{L}_j) = \widetilde{L}_j[n-1]$ near $\partial U$ for $j = 0,2$.
Since this simply amounts to an integer shift, it is a harmless deviation from~\cite{seidel-2000}.
{Note that we can actually quantify \textit{near} here. Outside the support of $\tau_i$, the grading is simply shifted. To be more precise, the grading $\widetilde{\tau_i}(\widetilde{L}_j)$ agrees with the canonical grading 
$\widetilde{L}_j$ in the radial shell from $0$ to $ \hat{h}_i$.
In the radial shell from $\check{h}_{i+1}$ outwards, $\widetilde{\tau_i}(\widetilde{L}_j)$ agrees with $\widetilde{L}_j[n-1]$. This holds for $j=0,2$.}
For $\phi_v$ we use the canonical grading.
With these gradings in place, we can compute the degrees of the intersection points $\tau^{2k}_i(L_0) \cap \phi_v(L_2)$
as generators of the Floer complex $CF(\tau^{2k}_i(L_0), \phi_v(L_2))$.
{
\begin{lemma}\label{lem:degrees_phi}
    Assume $c \in \tau_i^{2k}(L_0) \cap \phi_v(L_2) \cap \supp \phi_v$. We set 
    \[m \coloneqq \left\lfloor \frac{\abs{\theta_v(\norm{c^\sharp})}}{\pi}\right\rfloor \in \Nats_0.\]
    If $\norm{c^\sharp} < \hat{h}_i$, we have
    \begin{align*}
        \mu(c; \widetilde{\tau}_i^{2k}(\widetilde{L}_0), \widetilde{\phi}_v(\widetilde{L}_2)) = \begin{cases}
            n + (n-1)m                  & \text{if } \theta_v(\norm{c^\sharp}) > 0, \theta'_v(\norm{c^\sharp}) > 0; \\
            n + (n-1)m - 1              & \text{if } \theta_v(\norm{c^\sharp}) > 0, \theta'_v(\norm{c^\sharp}) < 0; \\
            - (n-1)m  + 1               & \text{if } \theta_v(\norm{c^\sharp}) < 0, \theta'_v(\norm{c^\sharp}) > 0; \\
            - (n-1)m                    & \text{if } \theta_v(\norm{c^\sharp}) < 0, \theta'_v(\norm{c^\sharp}) < 0. 
        \end{cases} 
    \end{align*}
    If $\norm{c^\sharp} > \check{h}_{i+1}$ we have
    \begin{align*}
        \mu(c; \widetilde{\tau}_i^{2k}(\widetilde{L}_0), \widetilde{\phi}_v(\widetilde{L}_2)) = \begin{cases}
            2k(n-1) + n + (n-1)m                  & \text{if } \theta_v(\norm{c^\sharp}) > 0, \theta'_v(\norm{c^\sharp}) > 0; \\
            2k(n-1) + n + (n-1)m - 1              & \text{if } \theta_v(\norm{c^\sharp}) > 0, \theta'_v(\norm{c^\sharp}) < 0; \\
            2k(n-1) - (n-1)m  + 1                 & \text{if } \theta_v(\norm{c^\sharp}) < 0, \theta'_v(\norm{c^\sharp}) > 0; \\
            2k(n-1) - (n-1)m                      & \text{if } \theta_v(\norm{c^\sharp}) < 0, \theta'_v(\norm{c^\sharp}) < 0, 
        \end{cases} 
    \end{align*}
    where we identify $c$ with the respective covector under the identification of $U$ with a neighborhood of the zero section in $T^*S^n$. 
\end{lemma}}
\begin{proof}
    First we consider the case where $\norm{c^\sharp} < \hat{h}_i$ holds. {This radial band lies outside the support of $\tau_i^2$, and, by our earlier discussion, $\widetilde{\tau}_i$ has no effect on the grading of $L_0$ at $c$.}
    Then
     \[\mu(c; \widetilde{\tau}_i^{2k}(\widetilde{L}_0), \widetilde{\phi}_v(\widetilde{L}_2)) = \mu(c; \widetilde{L}_0, \widetilde{\phi}_v(\widetilde{L}_2)) .\]
    Thus, it merely remains to compute $\mu(c; \widetilde{L}_0, \widetilde{\phi}_v(\widetilde{L}_2))$.
    {
    We consider the paths 
     $\lambda_0 \equiv \Tvert_c$ and $\lambda_1(t) = {(D\phi^t_{v}{(\Tvert)})}_c$ of Lagrangian subspaces of $T_c T^*S^n$.
    Here $\Tvert \subset TT^*S^n$ denotes the vertical subbundle.
    Note that these paths can be seen as projections of paths 
    $\tilde\lambda_0,\tilde\lambda_1: \interval{0}{1} \to \LG^\infty$ with 
    $\tilde\lambda_0(0) \equiv \widetilde{L}_0(c)$, $\tilde\lambda_1(0) =\tilde{L}_0(c)$ and $\tilde{\lambda}_1(1) =\widetilde{\phi}_v(\widetilde{L}_2)(c)$.
    This follows from the fact that the canonical grading of $L_0$ projects down to the vertical subbundle by definition.
    Further, the grading of $\phi_v$ is defined by lifting $D\phi_v$ to the universal cover. Thus, we can lift $\lambda_1$ to $\tilde{\lambda}_1$, and we can use $\lambda_0$ and $\lambda_1$ to compute the index of $c$ as described in~\eqref{eq:abs_grading}.}
    We have
    \[ \mu(c; \widetilde{L}_0, \widetilde{\phi}_v(\widetilde{L}_2))= \frac{n}{2}-\mu(\lambda_0,\lambda_1) ,\]
    where $\mu(\lambda_0,\lambda_1)$ is the Maslov index of these two paths.
    The relationship between $\mu(\lambda_0,\lambda_1)$ in this setup and the conjugacy index (i.e.\ multiplicity) of conjugate points along the geodesic $\gamma$ on $S^n$ connecting $x$ and $y$ is standard in the literature for the genuine geodesic flow.
    The essential argument is originally due to Duistermaat in~\cite{duistermaat-1976} and a proof in the modern language can be found in~\cite[Prop. 6.38]{robbin-salamon-1995}.
    
    {
    The argument below essentially follows~\cite[Prop. 6.38]{robbin-salamon-1995} with some adjustments made to account for the reparametrization of the geodesic flow. 
    We include it here for the sake of completeness and because the results of this paper depend on these indices in an essential way.

    In the following we denote the cogeodesic flow by $G(t): T^*S^n \to T^*S^n$ for the time-$t$ cogeodesic flow.
    We will later use the following presentation of the differential of the cogeodesic flow, see e.g.~\cite[Lemma 1.40]{paternain-1999},
    \begin{align*}
        DG(t): T_\xi T^*S^n &\to T_{G(t)(\xi)}T^*S^n \\
        \eta &\mapsto (J_{\eta}(t), \dot J_{\eta}^\flat(t)),
    \end{align*}
    where $\xi \in T^*S^n$ and $J_{\eta}$ is the unique Jacobi field along the geodesic $t \mapsto pr(G(t)(\xi))$ with $J(0) = \eta^{hor}$ and $\dot J(0) = (\eta^{vert})^\sharp$.
    Here $pr: T^*S^n \to S^n$ is the bundle map, and $\eta^{hor}$ and $\eta^{vert}$ denote the horizontal and vertical part of the covector $\eta \in T_\xi T^*S^n$.
    As before, we denote the musical isomorphisms associated to the standard round metric by $\flat: TS^n \to T^*S^n$ and $\sharp: T^*S^n \to TS^n$.
    We recall another standard fact from Riemannian geometry, namely
    that on a round sphere $S^n$ two points are conjugate along a great circle if they are antipodal.
    The conjugacy index of antipodal points along any great circle is $n-1$.
    The latter is a well-known fact in Riemannian geometry, see e.g.~\cite[p. 299]{lee-2018}. 

    By definition, we have $\phi_v^t(\xi) = G(t\cdot \theta_v(\norm{\xi^\sharp}) \cdot \norm{\xi^\sharp}^{-1})(\xi)$ for $\xi \in T^*S^n \setminus o_{S^n}$.
    Since $\norm{\xi^\sharp} = 0$ if $\xi \in o_{S^n}$ we extend this definition, by a slight abuse of notation, to $T^*S^n$.
    Note that the cogeodesic flow can be seen as the flow of the cogeodesic vector field $V_G(\xi) = (\xi^\sharp, 0)$, 
    where this notation indicates that the vertical part of $V_G(\xi)$ vanishes and the horizontal part is $\xi^\sharp$.
    In particular, $\frac{d}{dt}G(t)(\xi) = V_G(G(t)(\xi))$ for any $t \in \Reals, \xi \in T^*S^n$.
    We can then directly compute, using the gradient of the norm and the chain rule, that 
    \begin{equation}\label{eq:Dphi}
        \begin{split} 
        D\phi^t_v(\xi)(\eta) &= DG(t\cdot \theta_v(\norm{\xi^\sharp}) \cdot \norm{\xi^\sharp}^{-1})(\xi)(\eta) \\
        & \qquad+ t\cdot \frac{\langle \xi^\sharp,(\eta^{vert})^\sharp \rangle_{S^n}}{\norm{\xi^\sharp}^2}\cdot\left[  \theta'_v(\norm{\xi^\sharp}) - \frac{\theta_v(\norm{\xi^\sharp})}{\norm{\xi^\sharp}} \right] V_G(\phi^t_v(\xi)),
        \end{split}
    \end{equation}
    where $\eta \in T_\xi T^*S^n$ and $\langle \cdot, \cdot \rangle_{S^n}$ is the round metric on the sphere and $\eta^{vert}$ is the projection of $\eta$ to the vertical subbundle.
    In the following, we use $\alpha = \theta_v(\norm{c^\sharp}) \cdot \norm{c^\sharp}^{-1}$ as shorthand.

    In~\eqref{eq:Dphi}, we notice immediately that the second term vanishes whenever $\eta^{vert} \perp \xi$.
    Let $\gamma: \Reals \to S^n$ be the geodesic uniquely defined by $\dot\gamma(\alpha) = c^\sharp$ and $\gamma(\alpha) = x$.
    By the formula for $\phi_v$ in terms of the cogeodesic flow given above, this implies that $\gamma(0) = y$.
    Let $e_1, \dots, e_n: \Reals \to TT^*S^n$ be a moving parallel orthogonal frame along $\gamma$ such that $e_1( \alpha
    ) = c^\sharp$.
    In particular, $\linspan\{(0,e^{\flat}_1(t)), \dots, (0,e^{\flat}_n(t))\}$ is the vertical subspace over $\dot\gamma^\flat(t)$.
    Note that we can trivialize the rank $2n$ symplectic bundle $\dot\gamma^* TT^*S^n$ in such a way that all vertical subspaces
    are identified with $0 \times \Reals^{n} \subset \Reals^{2n}$.
    We fix such a trivialization for the remainder of the argument, in order to apply the machinery from~\cite{robbin-salamon-1993}
    to compute $\mu(\hat{\lambda}_0,\hat{\lambda}_1)$.

    It becomes more convenient to work with two slightly more geometrically meaningful paths than $\lambda_0,\lambda_1$.
    Namely, set 
    \begin{align*}
        \hat{\lambda}_0(t) &= D\phi_v^{-t}(\lambda_0(t)) = D\phi_v^{-t}(\Tvert_c) \\
        \hat{\lambda}_1(t) &= D\phi_v^{-t}(\lambda_1(t)) = \Tvert_{\phi_v^{-t}(c)}.
    \end{align*}
    By the naturality property of the Maslov index (cf.~\cite[Theorem 3.1]{robbin-salamon-1993}), 
    we have that $\mu(\lambda_0,\lambda_1) = \mu(\hat{\lambda}_0,\hat{\lambda}_1)$.
    Recall that $(0,e_1^\flat(\alpha)),\dots,(0,e_n^\flat(\alpha))$ span $\Tvert_c$.
    Thus, 
    \[\hat{\lambda}_0(t) = \linspan \left\{ D\phi_v^{-t}((0,e_1^\flat(\alpha)),\dots,D\phi_v^{-t}((0,e_n^\flat(\alpha)))  \right\},\]
    we denote this basis by $g_i(t) = D\phi_v^{-t}(0,e_i^\flat(\alpha))$ for $i \in \{1,\dots,n\}$.
    Now, notice that the Jacobi field $J_{(0,c)}$ is the variational vector field of the family $s \mapsto \gamma(\alpha+e^s\cdot t)$ of reparametrizations of $\gamma$.
    Thus, $J_{(0,c)}(t) = (t\cdot \dot\gamma(\alpha+t), \dot\gamma(\alpha+t))$ for all $t \in \Reals$.
    With this in mind, we can compute that 
    \begin{align*}
     g_1(t) &= D\phi_v^{-t}(0,e_1^\flat(\alpha)) \\
        &= \left(- t\alpha \cdot \dot\gamma(\alpha(1-t)),\dot\gamma(\alpha(1-t))\right) - t\cdot \left[  \theta'_v(\norm{c^\sharp}) - \alpha \right] V_G(\phi^{-t}_v(c)) \\
        &= \left(- t\alpha \cdot e_1(\alpha(1-t)),e_1^\flat(\alpha(1-t))\right) - t\cdot \left[  \theta'_v(\norm{c^\sharp}) - \alpha \right] e_1^\flat(\alpha(1-t)) \\
        &= (-t\cdot \theta'_v(\norm{c^\sharp})\cdot e_1(\alpha(1-t)), e^\flat_1(\alpha(1-t)) ),
    \end{align*}
    where we use the $(\cdot,\cdot)$ notation to denote the horizontal and vertical part of a tangent vector in $TT^*S^n$.
    We also use the fact that $e_1(\alpha(1-t))$ is the parallel transport of $c^\sharp$ along $\gamma$ from time $\alpha$ to time $\alpha(1-t)$.
    This follows directly from the fact that $e_1,\dots,e_n$ form a parallel frame and that $e_1(\alpha) = c^\sharp$.
    By the assumption of transversality, $\theta'_v(\norm{\xi}) \not= 0$ for any intersection point $\xi$ and thus for $c$.
    Thus, we can conclude that the horizontal part of $g_1$ never vanishes for $t>0$.
    We can therefore ignore it in our analysis of crossings for $t>0$.

    Since the frame $e_1, \dots, e_n$ is always orthogonal, we can see directly that the second line in~\eqref{eq:Dphi} vanishes in $g_i(t)$ for $i \geq 2$.
    In particular, by~\eqref{eq:Dphi}, 
    \begin{align*}
        g_i(t) &= D\phi_v^{-t}(0,e_i^\flat(\alpha)) \\
        &= (J_{(0,e_i^\flat(\alpha))}(-\alpha\cdot t), \dot J^\sharp_{(0,e_i^\flat(\alpha))}(-\alpha\cdot t)),
    \end{align*}
    for $i \in \{2,\dots,n\}$.

    Assume that $0 < t_0 < 1$ is a crossing, i.e.\ that 
    \[\dim_{\Reals} \lambda_0(t_0) \cap \lambda_1(t_0) = \dim_{\Reals} \hat{\lambda}_0(t_0) \cap \hat{\lambda}_1(t_0)> 0.\]
    We can see directly that $\dim_{\Reals} \lambda_0(t_0) \cap \lambda_1(t_0)$ is the conjugacy index of the points $\gamma(\alpha(1-t_0))$ and $\gamma(\alpha) = y$
    along the geodesic $\gamma$. This follows from~\eqref{eq:Dphi} and the decomposition of the differential of the cogeodesic flow in terms of Jacobi fields.
    Thus, $ \dim_{\Reals} \hat{\lambda}_0(t_0) \cap \hat{\lambda}_1(t_0)$ is the number of linearly independent Jacobi fields for $\gamma$ that vanish at $\gamma(\alpha(1-t_0))$ and $\gamma(\alpha)$, i.e.\
    the conjugacy index of these points.
    Since we are working on a round sphere, this implies $\dim_{\Reals} \hat{\lambda}_0(t_0) \cap \hat{\lambda}_1(t_0) = n-1$.
    In particular this implies that $\hat{\lambda}_0(t_0) \cap \hat{\lambda}_1(t_0) = \linspan \{g_2(t_0), \dots, g_n(t_0)\}$.

    The crossing index of this crossing is the signature of the crossing form $\Gamma(\lambda_0,\lambda_1,t_0)$, which is defined by 
    \begin{align*}
        \Gamma(\lambda_0,\lambda_1,t_0)g_i(t_0) &= \frac{d}{ds}\Big\vert_{s=t_0} \omega((g_i^{hor}(t_0),g_i^{vert}(t_0)),(-g_i^{hor}(s),0)) 
        \\ &= \left\langle \frac{d}{ds}\Big\vert_{s=t_0}g_i^{hor}(s),g_i^{vert}(t_0)\right\rangle_{S^n},
    \end{align*}
    where $i \in \{2,\dots,n\}$ and 
    $g_i^{vert}$ and $g_i^{hor}$ denote the vertical and horizontal parts of the respective covectors.
    Here we implicitly use the trivialization of $\dot\gamma^*TT^*S^n$ we chose earlier to define the crossing form.
    We refer the reader to~\cite{robbin-salamon-1993} and~\cite{robbin-salamon-1995} for the definition and elementary properties of the crossing form.
    Given our previous considerations, we 
    have 
    \begin{align*}
        \left\langle \frac{d}{ds}\Big\vert_{s=t_0}g_i^{hor}(s),g_i^{vert}(t_0)\right\rangle_{S^n}
         &= 
        \left\langle \frac{d}{ds}\Big\vert_{s=t_0} J_{(0,e_i^\flat(\alpha))}(-\alpha\cdot s), \dot J{(0,e_i^\flat(\alpha))}(-\alpha\cdot t_0)\right\rangle_{S^n}
        \\ &= -\frac{\theta_v(\norm{c^\sharp})}{\norm{c^\sharp}} \norm*{\dot J_{(0,e^\flat_i(\alpha))}(-\alpha t_0)}^2 \not= 0,
    \end{align*}  
    Thus, the signature of $\Gamma(\lambda_0,\lambda_1,t_0)$ is $-\sign \theta_v(\norm{c^\sharp}) \cdot (n-1)$.
    
    Note that $\dim_{\Reals} \hat{\lambda}_0(0) \cap \hat{\lambda}_1(0) = n$.
    The computation of $\Gamma(\lambda_0,\lambda_1,t_0)g_i(t_0)$ performed for $i =2,\dots,n$ in the $t>0$ case still remains valid in the $t=0$ case.
    However, we now must also compute $\Gamma(\lambda_0,\lambda_1,0)g_1(0)$.
    We obtain that 
    \begin{align*}
        \Gamma(\lambda_0,\lambda_1,0)g_1(0) &= \frac{d}{ds}\Big\vert_{s=0} \omega((0,c^\sharp),(-g_1^{hor}(s),0)) \\
        &= \frac{d}{ds}\Big\vert_{s=0} -s\cdot \theta'_v(\norm{c^\sharp})\cdot \norm{c^\sharp}^2
        =  -\theta'_v(\norm{c^\sharp})\cdot \norm{c^\sharp}^2 \not= 0, 
    \end{align*}
    and thus the crossing at $t=0$ has index $-(\sign \theta'_v(\norm{c^\sharp}) + \sign \theta_v(\norm{c^\sharp}) \cdot (n-1))$.
    Again we implicitly use the trivialization of $\dot\gamma^*TT^*S^n$ to define the crossing form.
    The Maslov index $\mu(\hat{\lambda}_0,\hat{\lambda}_1)$ is obtained as a sum over the crossing indices of all crossings. 
    The crossings at $t=0$ and $t=1$ are weighted by $\frac{1}{2}$, see~\cite{robbin-salamon-1993} for details.
    In our case there is no crossing at $t=1$ by the assumption of transversality.
    Since we have seen that crossings for $t>0$ corresponds to conjugate points, it only remains to count those.
    Recall, that on a round sphere $S^n$ two points are conjugate along a great circle exactly when they are antipodal.
    Thus, the number of conjugate points along the relevant segment of $\gamma$ is given by $\left\lfloor \frac{\abs{\theta_v(\norm{c^\sharp})}}{\pi}\right\rfloor$.
    Overall, we therefore obtain the formula
    \begin{align*}
        \mu(\hat{\lambda}_0,\hat{\lambda}_1) &= -\frac{\sign \theta_v'(\norm{c^\sharp}) + \sign(\theta_v(\norm{c^\sharp}))(n-1)}{2} \\ &\qquad-  \sign(\theta_v(\norm{c^\sharp}))(n-1)\left\lfloor \frac{\abs{\theta_v(\norm{c^\sharp})}}{\pi}\right\rfloor.
    \end{align*} }
    Since
    \begin{align*}
        \mu(c;\widetilde{\tau}_i^{2k}(\widetilde{L}_0), \widetilde{\phi}_v(\widetilde{L}_2)) &= \frac{n}{2}-\mu(\lambda_0,\lambda_1) = \frac{n}{2}-\mu(\hat{\lambda}_0,\hat{\lambda}_1)
    \end{align*}
    this implies the relevant cases in the lemma.

    Note that essentially the same argument applies for the case when $ \norm{c^\sharp} > \check{h}_{i+1}$. 
    Recall from above that $\widetilde{\tau}_i$ acts as a simple shift on $\widetilde{L}_0$ in this radial shell. This shift is all that has to be accounted for.
    We obtain 
    \begin{align*} \mu(c; \widetilde{\tau}_i^{2k}(\widetilde{L}_0), \widetilde{\phi}_v(\widetilde{L}_2))
        &= \mu(c; \widetilde{L}_0[2k(n-1)], \widetilde{\phi}_v(\widetilde{L}_2)) = \mu(c; \widetilde{L}_0, \widetilde{\phi}_v(\widetilde{L}_2)) + 2k(n-1),
    \end{align*}
    thus completing the proof.
\end{proof}
The remaining degrees obey the standard behavior for the Dehn twist (up to our non-standard shift), which we restate for the reader's convenience.
{Recall that $\delta = d_{S^n}(x,y)$ is the distance between the unique intersection points in $L_0 \cap L_1$ and in $L_1 \cap L_2$ on $L_1$.
The distance is taken with respect to the round metric under the identification of $L_1$ with $S^n$ given by the framing $v$.}
{
\begin{lemma}\label{lem:degrees_tau} 
    Assume $\xi \in \tau_i^{2k}(L_0) \cap \phi_v(L_2)$ lies outside $\supp \phi_v$. Then we have 
    \[\mu(\xi; \widetilde{\tau}_i^{2k}(\widetilde{L}_0), \widetilde{\phi}_v(\widetilde{L}_2)) = n + (2 k - 1 - m) (n - 1),\]
    where $m$ is such that {$2k\rho_i(\norm{\xi^\sharp}) = \pi \cdot m + \delta$ or $2k\rho_i(\norm{\xi^\sharp}) = \pi \cdot (m + 1) - \delta$.} 
    Since $\rho_i(\Reals) = \interval{0}{\pi}$, this implies that $m$ lies between $0$ and $2k-1$.
\end{lemma}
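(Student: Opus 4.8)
The plan is to rerun the argument from the proof of Lemma~\ref{lem:degrees_phi}, the only differences being that the reparametrizing profile is now the monotone function $2k\rho_i$ rather than $\theta_v$, and that the grading shift carried by the Dehn twist must be accounted for. First, since $\xi$ lies outside $\supp\phi_v$, the diffeomorphism $\phi_v$ is the identity in a neighborhood of $\xi$; hence $\widetilde{\phi}_v(\widetilde{L}_2)$ agrees with $\widetilde{L}_2$ there and $\mu(\xi; \widetilde{\tau}_i^{2k}(\widetilde{L}_0), \widetilde{\phi}_v(\widetilde{L}_2)) = \mu(\xi; \widetilde{\tau}_i^{2k}(\widetilde{L}_0), \widetilde{L}_2)$. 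Writing $r \coloneqq \norm{\xi^\sharp}$, the construction of $\tau_i$ shows that $\tau_i^{-2k}(\xi)$ is a covector over $x$ obtained from $\xi$ by the normalized cogeodesic flow run for time $2k\rho_i(r)$; this says exactly that there is a geodesic of $S^n$ from $x$ to $y$ of length $2k\rho_i(r)$. As $\rho_i$ takes values in $[0,\pi]$, this length lies in $[0,2k\pi]$, and since the lengths of geodesics from $x$ to $y$ are precisely the numbers $m\pi + \delta$ and $m\pi - \delta$ with $m$ a non-negative even integer, the normal form and the range of $m$ in the statement follow (in particular $0 \le m \le 2k$ in the $-\delta$ case, because $\delta \in (0,\pi)$). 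As in Lemma~\ref{lem:degrees_phi}, a small perturbation if necessary ensures $\tau_i^{2k}(L_0) \trans \phi_v(L_2)$, which in particular keeps $x$ and $y$ non-conjugate along this geodesic.

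The Maslov index is then computed exactly as in the proof of Lemma~\ref{lem:degrees_phi}. One passes via formula~\eqref{eq:abs_grading} to the constant path $\lambda_0 \equiv \Tvert_\xi$ and the path $\lambda_1$ of Lagrangian subspaces obtained by transporting the vertical subbundle along the reparametrized cogeodesic flow that produces $\tau_i^{2k}(L_0)$ near $\xi$, so that the problem reduces to $\tfrac{n}{2} - \mu(\lambda_0,\lambda_1)$ corrected by the grading shift of $\tau_i^{2k}$ fixed in the previous section. By Duistermaat's theorem~\cite{duistermaat-1976}, in the form of~\cite[Prop. 6.38]{robbin-salamon-1995}, $\mu(\lambda_0,\lambda_1)$ is read off from the conjugate points along the relevant geodesic segment --- which occur at the integer multiples of $\pi$, each with conjugacy index $n-1$ --- plus a reparametrization term; because $\rho_i$ is monotone on its transition region this term has no sign ambiguity, in contrast with $\theta_v$ which has an interior maximum. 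Combining the $n/2$, this reparametrization term, the accumulated grading shift $[2k(n-1)]$ of $\tau_i^{2k}$, and the contribution $n-1$ per conjugate point, the half-integers cancel and the answer collapses to $n + m(n-1)$.

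The step I expect to be the main obstacle is this final bookkeeping, in particular forcing the two cases $2k\rho_i(r) = m\pi + \delta$ and $2k\rho_i(r) = m\pi - \delta$ to yield the \emph{same} expression $n + m(n-1)$: a naive conjugate-point count gives $m$ interior conjugate points in the first case but only $m-1$ in the second, and the resolution is that the shift coming from the (deliberately non-standard) grading of $\tau_i$ precisely absorbs this difference of $n-1$. Since this is the standard index calculation for iterates of a Dehn twist on cotangent fibers, an alternative would be to invoke~\cite{frauenfelder-schlenk-2005} directly and translate into our conventions --- the lemma being only a restatement of that computation in the present setting.
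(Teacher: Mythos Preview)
Your approach is the same as the paper's, and the opening reduction together with the appeal to Duistermaat/Robbin--Salamon is exactly right. Two points of caution, however.

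First, a labeling issue: since it is $L_0$ that is being moved by $\tau_i^{2k}$, the path tracking the reparametrized cogeodesic flow should be $\lambda_0$, not $\lambda_1$; the paper takes $\lambda_1 \equiv \Tvert_\xi$ constant and $\lambda_0(t) = D\bigl(\sigma \circ (\xi \mapsto (2k\rho_i(\norm{\xi})t,\xi))\bigr)(\Tvert)_\xi$. Because the Maslov index for paths is antisymmetric in its two arguments, swapping them introduces a global sign that would have to be tracked.

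Second, and more substantively, there is no external ``accumulated grading shift $[2k(n-1)]$'' to add here. That shift is the value of the grading of $\tau_i^{2k}$ \emph{near $\partial U$}, where $\tau_i^{2k} = \id$; but $\xi$ lies in $\supp \tau_i^2 \subset M_{(\hat h_i,\check h_{i+1})}$, not near $\partial U$. With the paper's grading convention on $\tau_i$, the lift of the reparametrized flow path $\lambda_0$ (starting at the canonical vertical grading) already terminates at $\widetilde{\tau_i^{2k}(L_0)}(\xi)$, so formula~\eqref{eq:abs_grading} gives the degree directly as $\tfrac{n}{2} - \mu(\lambda_0,\lambda_1)$, with no further correction. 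The discrepancy you anticipate between the cases $m\pi+\delta$ and $m\pi-\delta$ is therefore not resolved by an external grading shift but by the endpoint/reparametrization contribution to $\mu(\lambda_0,\lambda_1)$ itself, coming from the monotonicity $\rho_i' < 0$. The paper simply asserts $\mu(\lambda_0,\lambda_1) = -\tfrac{n}{2} - (n-1)m$ in both cases and does not spell this cancellation out; your instinct to fall back on~\cite{frauenfelder-schlenk-2005} for the final bookkeeping is sound.
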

\begin{proof}
    This proof is almost the same as before.
    First note that, by construction of $\tau_i$, the fact that $\xi$ is an intersection point implies that $2k\rho_i(\norm{\xi^\sharp}) = \pi \cdot m + \delta$ or $2k\rho_i(\norm{\xi^\sharp}) = \pi \cdot (m + 1) - \delta$ for some $m \in \Nats_0 \cap \interval{0}{2k-1}$.
    Since $\xi$ lies outside the support of $\phi_v$ and $\phi_v$ is a Hamiltonian diffeomorphism and $\widetilde{\phi}_v$ is the canonical grading,
    \[\mu(\xi; \widetilde{\tau}_i^{2k}(\widetilde{L}_0), \widetilde{\phi}_v(\widetilde{L}_2)) 
        = \mu(\xi; \widetilde{\tau}_i^{2k}(\widetilde{L}_0), \widetilde{L}_2) .\]
    {Thus, it remains to compute $\mu(\xi; \widetilde{\tau}_i^{2k}(\widetilde{L}_0), \widetilde{L}_2)$.  
    This can be done in exactly the same fashion as in the proof of Lemma~\ref{lem:degrees_phi}.
    To see this, note that $\xi \not\in o_{S^n}$.
    Thus, in the relevant part of $T^*S^n$, we can write $\tau_i$ as a reparametrization of the cogeodesic flow.
    To be precise,
    \[\tau_i^{2k}(\xi) = G(2k\rho_i(\norm{\xi^\sharp}) \cdot \norm{\xi^\sharp}^{-1})(\xi),\]
    which follows directly from the construction of our model Dehn twists in Section~\ref{sec:setup}.
    We introduce the auxiliary flow 
    \[\psi^t(\xi) = G(t\cdot2k\rho_i(\norm{\xi^\sharp}) \cdot \norm{\xi^\sharp}^{-1})(\xi),\]
    for $\xi \in T^*S^n \setminus o_{S^n}$ and $t \in \Reals$.
    Note that outside the zero section $\psi^1 = \tau_i^{2k}$.
    Then we define $\lambda_1 \equiv \Tvert_\xi$ and $\lambda_0(t) = D\psi^t(\Tvert)_c$.
    By lifting these paths to the universal cover, we obtain paths which can be used to compute the grading as described in Section~\ref{sec:floer_theory_setup}
    and as done in the proof of Lemma~\ref{lem:degrees_phi}.
    This is exactly the same setup as in the proof of Lemma~\ref{lem:degrees_phi} with the roles of $\lambda_0$ and $\lambda_1$ reversed.
    Therefore, our previous argument applies verbatim. Therefore, we obtain the same formula as at the end of the proof of Lemma~\ref{lem:degrees_phi}, except for the sign which has to be reversed:
    \begin{align*} \mu(\lambda_0,\lambda_1) &= \frac{\sign \rho_i'(\norm{\xi^\sharp}) + \sign(\rho_i(\norm{\xi^\sharp}))(n-1)}{2}  \\ &\qquad + \sign(\rho_i(\norm{\xi^\sharp}))\left\lfloor \frac{2k\rho_i(\norm{\xi^\sharp})}{\pi} \right\rfloor(n-1) \\
    &= \frac{n-2}{2} + (n-1)m.  
    \end{align*}

    However, note that we use a non-standard grading for $\tau$.
    We have so far not incorporated this choice into the computation.
    The flow $\Psi$ is the identity near the boundary of $U$, i.e. outwards of the outermost radial shell.
    Thus, it acts trivially on the grading of the Lagrangians in this region.
    However, our chosen grading $\widetilde{\tau}$ acts as a $2k(n-1)$-shift near the boundary of $U$.
    Thus, we need to apply this shift to the resulting index.
    Here we use the uniqueness of a grading for Dehn twists up to a shift, see~\cite{seidel-2000}.
    This implies
    \begin{align*}
        \mu(\xi;\widetilde{\tau}_i^{2k}(\widetilde{L}_0), \widetilde{\phi}_v(\widetilde{L}_2)) &= 2k(n-1) + \frac{n}{2} - \left(\frac{n-2}{2} + (n-1)m\right) \\ &= 1 + (2 k - m) (n - 1)
        \\ &= n + (2 k - 1 - m) (n - 1).
    \end{align*}} 
\end{proof}}
{
\begin{remark}
    Note that these degrees differ from the results obtained using the more conventional grading given in~\cite{seidel-2000} for the Dehn twist by a $2k(n-1)$-shift.
    This is due to the non-canonical grading we impose on $\tau^2_i$, which acts as a non-trivial shift near the boundary.
    This is done to simplify later computations.
    The degrees given in~\cite{frauenfelder-schlenk-2005} also differ from the canonical grading by yet a different shift.
    However, up to this $\Z$-ambiguity, all of these gradings do of course agree.
\end{remark}
\begin{remark}\label{rmk:intersection_point_count}
    Note that in the proof of Lemma~\ref{lem:degrees_tau} we have also implicitly seen the 
    number of intersection points in $\tau_i^{2k}(L_0) \cap \phi_v(L_2) \cap \supp \tau_i^{2k} = \tau_i^{2k}(L_0) \cap L_2$.
    Since $\supp \tau_i^{2k}$ lies away from $L_1$ (i.e.\ the zero section in the local model), 
    we can use $\tau_i^{2k}(\xi) = \sigma(2k\cdot \rho_i(\norm{\xi^\sharp})(\xi)$.
    Let $\gamma: \Reals \to S^n$ be the unit-speed geodesic with $\gamma(0) = x$ and $\gamma(\delta) = y$.
    Note that up to reparametrization, this is the only geodesic connecting $x$ to $y$ since $x,y$ are two distinct non-antipodal points on a round sphere. 
    Recall that $\delta = d_{S^n}(x,y)$.
    Now assume that we have $\xi \in T_x^*S^n$ such that $\tau_i^{2k}(\xi) \in L_2$.
    Since $\sigma$ is the cogeodesic flow, $\xi$ has to be transported to some covector over $y$ along a geodesic. This implies $\xi^\sharp = s \cdot \dot\gamma(0)$.
    We can now solve for $s$ and obtain that it must satisfy $2k\rho_i(\abs{s}) = 2\pi \cdot m \pm \delta$ for some $m \in \Nats_0$.
    Solutions with $+\delta$ are realized geometrically for $s > 0$ and solutions with $-\delta$ are realized geometrically for $s < 0$.
    Since the range of $\rho_i$ is $\interval{0}{\pi}$, there is a solution for any $m \in \{0,\dots,k\}$.
    Since $\rho_i$ is monotone, there is exactly one solution (to the equation $2k\rho_i(s) = \delta$) for $m=0$, exactly two solutions (one for each sign) for $m \in \{1,\dots,k-1\}$,
    and exactly one solution (to the equation $2k\rho_i(s) = 2\pi k - \delta$) for $m = k$.
    Thus, in total there are exactly $2k$ intersection points in $\tau^{2k}(L_0) \cap \phi_v(L_2) \cap \supp \tau_i^{2k}$ which all live in different degrees by Lemma~\ref{lem:degrees_tau} since $n \geq 2$.
\end{remark}
}
\begin{remark}\label{rmk:fs}
	Frauenfelder and Schlenk prove in~\cite[Theorem 2.13]{frauenfelder-schlenk-2005}
	that $HF(\tau_i^{2k}(L_0),L_2)$ has rank $2k$ for the local model, i.e. two non-antipodal fibers in a sphere cotangent bundle.
	Their proof carries over to an $A_3$-configuration in $M$ for trivial reasons whenever $n\geq 3$, since the Floer boundary map has to vanish for 
	degree reasons when $n \geq 3$.
	Since we do not work with $n=1$, it remains to verify that the $n=2$ case carries over as well.
	Here we can use the fact that, by a standard maximum principle argument, finite-energy $J$-holomorphic strips connecting two intersection points in $U$
	must stay in $U$. See e.g.~\cite[Lemma 2.6]{frauenfelder-schlenk-2005}.
    Here we use that $U$ is symplectically convex.
    We can see this by using the fact that the (locally defined) Liouville vector field with respect to $\lambda_{\taut}$ is inwards-pointing on the boundary of $U$ (note our sign convention here). 
    Given that this Liouville vector field also lies in the tangent space of the Lagrangians at the intersection points with the boundary of $U$, 
    the maximum principle argument applies.
    We refer the reader to~\cite{frauenfelder-schlenk-2005} for all further details.
    The same argument applies in the $A_2$-plumbing.
	Thus, $\dim_{\field_2}HF(\tau_i^{2k}(L_0),L_2) = 2k$ in all relevant cases.
\end{remark}
\begin{remark}\label{rmk:gradings_bullet}
    Note that by construction of $\theta_v$, we have $\abs{\theta_v}(\Reals) \subset \interval{0}{\norm{v}_\infty \cdot 2\pi}$.
    Thus, for any $t \in \Reals$ we get
    \[\left\lfloor \frac{\abs{\theta_v(t)}}{\pi}\right\rfloor \leq \lfloor 2 \norm{v}_\infty  \rfloor \leq 2 \norm{v}_\infty.\]
    Now, let us recap which degrees might have generators in them for some $v \in \Reals^d$ and $k \in \Nats$.
    Due to the way we carefully constructed $\phi_v$ and $\tau_0,\dots,\tau_d$, we have \[\tau^{2k}_i(L_0) \cap \phi_v(L_2) = L_0 \cap \phi_v(L_2) \amalg\tau^{2k}_i(L_0) \cap L_2.\]
    All of these generators lie in $U$, and we use the identification of $U$ with a neighborhood of the zero section in $T^*S^n$ when we talk about radial bands below.
    In each of these parts, we can describe the possible degrees of the generators:
    \begin{enumerate}
        \item By Lemma~\ref{lem:degrees_phi}, generators which lie in a radial band from $0$ to $\hat{h}_i$ can be at most one degree away from the following degrees:
        \begin{center}
            \begin{tabular}{ c|c|c|c } 
                $-\floor{2 \norm{v}_{\infty}}  (n-1)$ & $-(\floor{2\norm{v}_\infty}-1) (n-1)$ & \dots & $0$ \\
                $n$  & $n + (n-1)$ & \dots & $n + \floor{2 \norm{v}_{\infty}}(n-1)$
            \end{tabular}
        \end{center}
        Here, in the first row the actual degree might be higher by one and in the second row it might be lower by one.
        {Note that the columns are not meaningful here and each row lists a different progression of degrees. The first row is associated with positive values of some $v_i$ and the second row with negative values.}
        \item  Generators which lie in the radial band from $\hat{h}_i$ to $\check{h}_{i+1}$ can appear only in the following degrees according to Lemma~\ref{lem:degrees_tau}:
        \begin{center}{
            \begin{tabular}{ c|c|c|c|c } 
                $n$ & $n + (n-1)$ & $n + 2(n-1)$ &\dots & $n + (2k-1) (n-1)$ 
            \end{tabular}
            }
        \end{center}
        \item By the other case of Lemma~\ref{lem:degrees_phi}, generators with a norm higher or equal to $\check{h}_{i+1}$ can be at most one degree away from the following degrees:
        \begin{center}
            \begin{tabular}{ c|c|c } 
                $2k(n-1) - \floor{2 \norm{v}_{\infty}}(n-1)$ & \dots  & $2k(n-1)$ \\
                $2k(n-1) + n$  & \dots & $2k(n-1) + n + \floor{2 \norm{v}_{\infty}}(n-1)$
            \end{tabular}
        \end{center}
        Again, the actual degree might be higher by one in the first row and lower by one in the second row.
    \end{enumerate}
    {
        {For us, the essential piece of information is this}: Any generator $c \in \tau_i^{2k}(L_0) \cap \phi_v(L_2)$ that lies in $\supp \phi_v$ --- and thus outside the support of any $\tau_i^2$ --- is covered by the first or third case above.
        If case (1) above applies,
        $c$ has a degree below or equal to $n + \floor{2 \norm{v}_{\infty}}(n-1)$.  If case (3) applies, it has a degree above or equal to $(2k - \floor{2 \norm{v}_{\infty}})(n-1)$.
        Note that, for a fixed $v$, if $k$ is large enough, there is some space left between these two bounds.
        This is what we will make use of in the following section.
        } 
\end{remark}

\section{Spectral invariants}\label{sec:spectral}
We keep the setting and notation of the last section.
We will now define the spectral invariants which we will use to show one side of the quasi-isometry property in Theorem~\ref{thm:main_1}.
\begin{lemma}\label{lem:lower_bound}
Let $v,w \in \Reals^d$ be generic. Then 
\[\frac{1}{2}\norm{v-w}_{\infty} \leq d_{\Hof}(\phi_v(L_2), \phi_w(L_2)).\]
\end{lemma}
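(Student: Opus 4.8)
The plan is to establish the lower bound by extracting, for each radial band, a quantity computable from spectral invariants that is Lipschitz in the Hofer distance and that differs by exactly $|v_{i+1}-w_{i+1}|$ between the two configurations. Concretely, fix $k$ large enough, say $k \geq 2(\max\{\norm{v}_\infty,\norm{w}_\infty\}+3)$, so that the degree bookkeeping from Section~\ref{sec:index_comp} applies. First I would pin down, for each $i \in \{0,\dots,d\}$, a preferred homology class $\alpha_i \in HF(L_0,\phi_v(L_2))$. The point is that $HF(\tau_i^{2k}(L_0),\phi_v(L_2))$ is, by Frauenfelder--Schlenk, a $\field_2$-vector space of dimension $2k$ with at most one class in each degree; the intersection points produced by $\tau_i^{2k}$ in the band $M_{\interval{\hat h_i}{\check h_{i+1}}}$ occupy the arithmetic-progression degrees from Lemma~\ref{lem:degrees_tau}, and for $k$ large these degrees lie strictly between the clusters of degrees that the $\phi_v$-generators can occupy (Lemma~\ref{lem:degrees_phi} and the following remark). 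So there is a well-defined degree in which the homology class is carried by a single generator sitting in $\supp\tau_i^2$, and $\alpha_i$ is the pullback of that class under the naturality isomorphism; its spectral invariant is then literally the symplectic action of that one generator, computable from the explicit primitives \eqref{eq:primitive_phi} and \eqref{eq:primitive_tau}.

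Next I would define $a_i(v)$ and $a_i(w)$ as in the introduction, the differences of the spectral invariants of $\phi_v^*\alpha_{i+1}$ and $\phi_v^*\alpha_i$ inside $HF(\tau_i^{2k}(L_0),\phi_v(L_2))$. The key structural input is a degree/uniqueness argument: since each relevant Floer group has at most one class per degree, any compactly supported Hamiltonian diffeomorphism $\psi$ acting by naturality must send $\alpha_i$ to $\alpha_i$ (it preserves degree and there is no other candidate), so the choice of class is canonical and survives Hamiltonian isotopy. Then Corollary~\ref{cor:lipschitz}, applied to the isotopy carrying $\phi_v(L_2)$ to $\phi_w(L_2)$, gives $|c(\phi_v^*\alpha_j;\cdot) - c(\phi_w^*\alpha_j;\cdot)| \leq d_{\Hof}(\phi_v(L_2),\phi_w(L_2))$ for $j = i, i+1$, whence by the triangle inequality
\[
|a_i(v) - a_i(w)| \leq 2\, d_{\Hof}(\phi_v(L_2),\phi_w(L_2)).
\]
Separately, I would compute $a_i(v)$ directly from the action functional: the generators carrying $\alpha_i$ and $\alpha_{i+1}$ both lie in $\supp\tau_i^2 = M_{\interval{\hat h_i}{\check h_{i+1}}}$, where $\phi_v$ acts as a reparametrised cogeodesic flow governed by $\theta_v$; plugging \eqref{eq:primitive_phi} and \eqref{eq:primitive_tau} into Definition~\ref{def:action} and using that the supports of $\phi_v$ and of $\tau_i^2$ are arranged so that the only contribution of $v$ that differs between the two chosen generators comes from the band $M_{\interval{\check h_{i+1}}{\hat h_{i+1}}}$, one gets $|a_i(v) - a_i(w)| = |v_{i+1} - w_{i+1}|$.

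Combining, $|v_{i+1}-w_{i+1}| \leq 2\, d_{\Hof}(\phi_v(L_2),\phi_w(L_2))$ for every $i \in \{0,\dots,d-1\}$, and taking the maximum over $i$ yields $\tfrac12 \norm{v-w}_\infty \leq d_{\Hof}(\phi_v(L_2),\phi_w(L_2))$, as claimed. The main obstacle I anticipate is the first step — proving that the preferred class $\alpha_i$ is genuinely carried by a \emph{single} generator and that this generator lies in the expected radial band. This rests on a careful reconciliation of three things: the dimension count $\dim HF(\tau_i^{2k}(L_0),\phi_v(L_2)) = 2k$, the at-most-one-class-per-degree fact, and the precise degree windows from Lemmas~\ref{lem:degrees_phi}--\ref{lem:degrees_tau}, which need $k$ large relative to $\norm{v}_\infty$ to guarantee the $\phi_v$-degrees and the $\tau_i^{2k}$-band-degrees do not collide; one must also invoke a generic perturbation to ensure transversality without moving the relevant action values. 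The action computation itself is routine once the primitives are in hand, and the Lipschitz estimate is immediate from Corollary~\ref{cor:lipschitz}, so the difficulty is concentrated in making the selection of $\alpha_i$ rigorous.
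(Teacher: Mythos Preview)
Your proposal is essentially correct and follows the same argument as the paper's proof: pick the unique degree-$n+k(n-1)$ class $\alpha_i$ in $HF(\tau_i^{2k}(L_0),L_2)$, use the degree windows from Lemmas~\ref{lem:degrees_phi}--\ref{lem:degrees_tau} with $k$ large to see that $\phi_v^*\alpha_i$ is represented by a single generator $\xi_i$ in the $i$th radial band, compute $a_i(v)-a_i(w)=w_{i+1}-v_{i+1}$ from the explicit primitives, and bound $|a_i(v)-a_i(w)|\le 2d_{\Hof}$ via the Lipschitz property plus degree-uniqueness. Two small slips to fix: $\alpha_i$ should live in $HF(\tau_i^{2k}(L_0),L_2)$ (not $HF(L_0,\phi_v(L_2))$, which vanishes), and the generators for $\alpha_i$ and $\alpha_{i+1}$ lie in $\supp\tau_i^2$ and $\supp\tau_{i+1}^2$ respectively, not both in $\supp\tau_i^2$---this is exactly why the difference picks up the $(i{+}1)$st band.
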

\begin{proof}
In keeping with the conventions of the last section, we will only treat the case of $n \geq 2$.
The two-dimensional case (i.e.\ $n=1$) is already implied by~\cite[Theorem 1.5]{zapolsky-2013}\footnote{This theorem applies to a Lagrangian which fibers over $S^1$ and gives an infinite-dimensional Hofer quasi-flat.}.

Since we can choose $v,w \in \Reals^d$ to be generic, we can assume that $\phi_v(L_2) \trans L_0$ as well as $\phi_w(L_2) \trans L_0$.
Note that this also implies that $ \tau^{2k}_i(L_0) \trans \phi_v(L_2)$ and $ \tau^{2k}_i(L_0) \trans \phi_w(L_2)$ for all admissible $k$ and $i$.
We set
\[k \coloneqq 2 \left( \lceil \max\{\norm{v}_\infty, \norm{w}_\infty\} \rceil + 3 \right).\]
Fix $i \in \{0,\dots,d\}$.
Consider an intersection point $c \in \tau^{2k}_i(L_0) \cap \phi_v(L_2)$ such that ${\norm{c^\sharp} \not\in [\hat{h}_i,\check{h}_{i+1}]}$.
The latter implies that $c \not\in \supp \tau^{2k}_i$.
By the results from the last section, we know that the corresponding generator of $CF(\tau^{2k}_i(L_0) ,\phi_v(L_2))$ must either live in a degree 
lower or equal to 
\[n + \floor{2 \norm{v}_{\infty}}(n-1) < n + k(n-1)\]
or a degree above or equal to \begin{align*}
(2k - \floor{2 \norm{v}_{\infty}})(n-1) &\geq 2(k - (\norm{v}_\infty + 1))(n-1) \\ &> 2\left(\frac{k}{2} + 1\right)(n-1)
\geq k(n-1)+n.\end{align*}
Here we use that $(\norm{v}_\infty + 1) < \frac{k}{2} - 1$ and 
 $3(n-1) > n$ since $n \geq 2$.
By the choice of $k$ the same holds true for such generators of $CF(\tau^{2k}_i(L_0) ,\phi_w(L_2))$.
Thus, there is a unique generator of degree {$n + k(n-1)$} in both $CF(\tau^{2k}_i(L_0),\phi_v(L_2))$ and $CF(\tau^{2k}_i(L_0),\phi_w(L_2))$ and this generator lies in the 
radial band from $\hat{h}_i$ to $\check{h}_{i+1}$.
The norm here should again be understood with respect to the local model in $U$.

Similarly, if we use the same gradings for $\tau^{2k}_i(L_0)$ and $L_2$, the homology group $HF(\tau_i^{2k}(L_0),L_2)$
	has a unique class in each of the following degrees by~\cite[Theorem 2.13]{frauenfelder-schlenk-2005} and Remark~\ref{rmk:fs} after applying our grading conventions (see Lemma~\ref{lem:degrees_tau}):
\begin{center}
    \begin{tabular}{ c|c|c|c|c } 
        $n$ & $n + (n-1)$ & $n + 2(n-1)$ &\dots & $n + (2k-1) (n-1)$ 
    \end{tabular}
\end{center} 
The same is true for any $HF(\tau_i^{2k}(L_0),L)$ with $L \in \Lag(L_2)$ since Lagrangian Floer homology is invariant under Hamiltonian isotopies
and the continuation map preserves degrees.
We had already established in the previous section, that the above degrees are the only possible degrees for
the intersection points generated by the Dehn twist. 
We will now make use of this
abundance of homology classes.
We denote by $\alpha_i$ the unique class in degree {$n + k(n-1)$} of the Floer homology group $HF(\tau_i^{2k}(L_0),L_2 )$.
Then for any $i \in \{0,\dots,d-1\}$ we set 
\[a_i(v) \coloneqq c(\phi_v^*\alpha_{i+1}; HF(\tau^{2k}_{i+1}(L_0), \phi_v(L_2))) - c(\phi_v^*\alpha_i; HF(\tau^{2k}_i(L_0), \phi_v(L_2))) \]
and
\[a_i(w) \coloneqq c(\phi_w^*\alpha_{i+1}; HF(\tau^{2k}_{i+1}(L_0), \phi_w(L_2))) - c(\phi_w^*\alpha_i; HF(\tau^{2k}_i(L_0), \phi_w(L_2))), \]
where $c(\beta; HF(L,L'))$ denotes the spectral invariant of the class $\beta$ in $HF(L,L')$.
Due to the index considerations given above, we know that $\phi_v^*\alpha_i$ and $\phi_w^*\alpha_i$ are both supported at a single intersection point which lies in the radial band $[\hat{h}_i,\check{h}_{i+1}]$. 
{This is due to the choice of $k$, which rules out that any intersection point with index $n+k(n-1)$ could lie in the support of $\phi_v$ or $\phi_w$.
Compare the end of Section~\ref{sec:index_comp} to the choice of $k$ made at the beginning of the proof to see this.
Recall, that all generators in the support of $\tau_i^2$ can be enumerated, see Remark~\ref{rmk:intersection_point_count}.
Furthermore, there only is one generator in the support of $\tau_i$ with the correct degree according to Lemma~\ref{lem:degrees_tau}.}
Let us denote this point by $\xi_i \in \tau^{2k}_i(L_0)$.
This implies that there is an easy formula for the above spectral invariants.
They simply correspond to the symplectic action of $\xi_i$.
Since we are looking at a Floer complex defined without a Hamiltonian perturbation, the action can be computed directly from the primitives.
Indeed,
\begin{align*}
    a_i(v) &= c(\phi_v^*\alpha_{i+1}; HF(\tau^{2k}_{i+1}(L_0), \phi_v(L_2))) - c(\phi_v^*\alpha_i; HF(\tau^{2k}_i(L_0), \phi_v(L_2))) \\
    &= \A_{v}(\xi_{i+1}) - \A_v(\xi_i) \\
    &= h_{\phi_v(L_2)}(\xi_{i+1}) -h_{\tau^{2k}_{i+1}(L_0)}(\xi_{i+1}) + h_{\tau^{2k}_i(L_0)}(\xi_i) - h_{\phi_v(L_2)}(\xi_i),
\end{align*}
where $\A_v$ denotes the action functional associated to $CF(\tau^{2k}_i(L_0), \phi_v(L_2))$.

We can now connect these spectral invariants to the Hofer distance in the usual way, i.e.\ by utilizing that 
the spectral invariants are $1$-Lipschitz with respect to Hamiltonian perturbations of the Lagrangians (see Corollary~\ref{cor:lipschitz}).
Assume ${\psi \in \Ham_c(M)}$ is any compactly supported Hamiltonian diffeomorphism with $\psi(\phi_v(L_2)) = \phi_w(L_2)$. Then $\psi$ induces a map $HF(\tau^{2k}_i(L_0), \phi_v(L_2)) \to HF(\tau^{2k}_i(L_0), \phi_w(L_2))$ on Floer homology that is an isomorphism of graded vector spaces. Thus, it has to map $\phi_v^*\alpha$ to $\phi_w^*\alpha$
for degree reasons. The Lipschitz property of the spectral invariants implies that for any class $\beta \in HF(\tau^{2k}_i(L_0), \phi_v(L_2))$ we have 
\[\abs{c(\beta; HF(\tau^{2k}_i(L_0), \phi_v(L_2))) - c(\psi^*\beta; HF(\tau^{2k}_i(L_0), \phi_w(L_2)))} \leq \norm{\psi}_{\Hof}.\]
Combining this with the fact that $\psi$ maps $\phi_v(L_2)$ to $\phi_w(L_2)$, we obtain
\begin{equation*}
\begin{split}
\abs{a_i(v) - a_i(w)} &\leq \abs{c(\phi_v^*\alpha_{i+1}; HF(\tau^{2k}_{i+1}(L_0), \phi_v(L_2))) - c(\phi_w^*\alpha_{i+1}; HF(\tau^{2k}_{i+1}(L_0), \phi_w(L_2)))} \\ &\qquad + \abs{c(\phi_v^*\alpha_{i}; HF(\tau^{2k}_i(L_0), \phi_v(L_2))) - c(\phi_w^*\alpha_{i}; HF(\tau^{2k}_i(L_0), \phi_w(L_2)))} \\ &\leq 2d_{\Hof}(\phi_v(L_2), \phi_w(L_2)).
\end{split}
\end{equation*}
We can compute the quantity on the left-hand side explicitly by using the construction of $\phi_v$ and $\phi_w$. Namely, by using~\eqref{eq:primitive_phi} we obtain that
\begin{align*}
 a_i(v) - a_i(w) &= ( h_{\phi_v(L_2)}(\xi_{i+1})-h_{\tau^{2k}_{i+1}(L_0)}(\xi_{i+1})  + h_{\tau^{2k}_i(L_0)}(\xi_i) - h_{\phi_v(L_2)}(\xi_i))
        \\ &\qquad- ( h_{\phi_w(L_2)}(\xi_{i+1})-h_{\tau^{2k}_{i+1}(L_0)}(\xi_{i+1})  + h_{\tau^{2k}_i(L_0)}(\xi_i) - h_{\phi_w(L_2)}(\xi_i))
        \\ &= h_{\phi_v(L_2)}(\xi_{i+1})-h_{\phi_v(L_2)}(\xi_i)-h_{\phi_w(L_2)}(\xi_{i+1})+h_{\phi_w(L_2)}(\xi_i)
        \\ &= \int_0^{\norm{\xi_i^\sharp}}\theta_v(s)ds - \int_0^{\norm{\xi_{i+1}^\sharp}}\theta_v(s)ds  + \int_0^{\norm{\xi_{i+1}^\sharp}}\theta_w(s)ds 
            - \int_0^{\norm{\xi_i^\sharp}}\theta_w(s)ds
        \\ &= \int_{\norm{\xi_i^\sharp}}^{\norm{\xi_{i+1}^\sharp}}\theta_w(s) - \theta_v(s)ds
        = (w_{i+1}-v_{i+1}) \int_{\norm{\xi_i^\sharp}}^{\norm{\xi_{i+1}^\sharp}}\theta_{i+1}(s) 
        \\ &= w_{i+1}-v_{i+1}.
\end{align*}
Recall that $\theta_{i+1}(\cdot) \coloneqq \theta(\cdot - \check{h}_{i+1})$, which implies that
$\supp \theta_{i+1} \subset \interval{\norm{\xi_i^\sharp}}{\norm{\xi_{i+1}^\sharp}}$.
In the last step we use this fact together with the normalization of $\theta$.
The above estimate together with ${\abs{a_i(v) - a_i(w)} \leq 2d_{\Hof}(\phi_v(L_2), \phi_w(L_2))}$ implies that 
\[\frac{1}{2}\abs{v_{i+1}-w_{i+1}} \leq d_{\Hof}(\phi_v(L_2), \phi_w(L_2)).\]
Since $i \in \{0,\dots,d-1\}$ was arbitrary, this implies that 
\[\frac{1}{2}\norm{v-w}_{\infty} \leq d_{\Hof}(\phi_v(L_2), \phi_w(L_2)).\]
\end{proof}

\section{Finite-dimensional Hofer quasi-flats}\label{sec:flats}
We will now prove a finite-dimensional version of Theorem~\ref{thm:main_1}.
For this, we keep the setting of the previous sections.
Recall, that this means that $(M^{2n},\omega= d\lambda)$ is a Liouville domain with $2c_1(M)=0$.
Further, we assume that $L_0,L_1,L_2 \subset M$ form an $A_3$-configuration of exact Lagrangian spheres.
{As in previous sections, the following proof can be applied verbatim to the cotangent bundle case as well.
For the cotangent case, we work with the notation $M = (T^*S^n, \lambda_{\taut})$ with $L_0 = T_x^*S^n,L_1 = o_{S^n},L_2 = T_y^*S^n$, where $x,y \in S^n$ are distinct
and not antipodal.
Note that this notation is chosen to emphasize that the same proof can be applied to both situations as is.}

Lemma~\ref{lem:lower_bound} already gives us one side of the inequality we need to obtain a Hofer quasi-flat.
The following lemma, which can be proven by direct computation, gives the other:
\begin{lemma}\label{lem:upper_bound}
Let $v,w \in \Reals^d$ be arbitrary. Then 
\begin{align*}d_{\Hof}(\phi_v(L_2), \phi_w(L_2)) \leq 2\norm{v-w}_1 \leq 2d \norm{v-w}_{\infty},\end{align*}
{where $\norm{v}_1 = \abs{v_1}  + \cdots + \abs{v_d}$ is the standard $1$-norm on $\Reals^d$.}
\end{lemma}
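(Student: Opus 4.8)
The plan is to reduce the inequality to a single elementary estimate on the oscillation of the autonomous Hamiltonian $H_{v-w}$ that generates $\phi_{v-w}$, so that no Floer theory is needed at all.

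First I would exploit the fact, recorded at the end of Section~\ref{sec:setup}, that $v \mapsto \phi_v$ is a group homomorphism $(\Reals^d,+) \to (\Ham_c(M),\circ)$. This gives $\phi_{v-w} = \phi_v \circ \phi_w^{-1}$, hence $\phi_{v-w}(\phi_w(L_2)) = \phi_v(L_2)$, so $\phi_{v-w}$ is one of the diffeomorphisms competing in the infimum defining $d_{\Hof}(\phi_v(L_2),\phi_w(L_2))$. Since $H_{v-w}$ is autonomous it belongs to $\mathcal{H}(\phi_{v-w})$, and therefore
\[
 d_{\Hof}(\phi_v(L_2),\phi_w(L_2)) \;\le\; \norm{\phi_{v-w}}_{\Hof} \;\le\; \osc(H_{v-w}).
\]

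Next I would bound $\osc(H_{v-w})$ by hand. With $u \coloneqq v-w$, linearity of $v \mapsto H_v$ gives $H_u = \sum_{i=1}^d u_i\,\Theta_i(\norm{\xi^\sharp})$ on $U$, where $\Theta_i(r) \coloneqq \int_0^r \theta(t-\check{h}_i)\,dt$. The point is that $0 \le \Theta_i \le 1$ everywhere, because $\theta \ge 0$ and $\int_\Reals \theta = 1$; hence $\abs{H_u} \le \sum_{i} \abs{u_i} = \norm{u}_1$ pointwise on $U$. As $H_u$ is locally constant outside $\bigcup_i M_{(\check{h}_i,\hat{h}_i)}$, with the constants involved (namely $0$ near the zero-section and $\sum_i u_i$ near $\partial U$) already attained on $U$, the same pointwise bound persists over all of $M$; consequently $\osc(H_u) = \max_M H_u - \min_M H_u \le 2\norm{u}_1$. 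The remaining inequality $2\norm{v-w}_1 \le 2d\,\norm{v-w}_\infty$ is just the standard comparison of the $\ell^1$- and $\ell^\infty$-norms on $\Reals^d$.

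There is essentially no obstacle here; the only items meriting a word of care are the passage to $\phi_{v-w}$ via the homomorphism property and the (harmless) bookkeeping of how $H_v$ is extended from $U$ to $M$ — equivalently, normalising $H_v$ to vanish near $\partial U$ — after which the estimate is immediate. I note in passing that tracking $H_u$ more precisely (it is monotone on each radial band and locally constant in between) even yields the sharper $\osc(H_u) = \max_{0\le j\le d} S_j - \min_{0\le j\le d} S_j \le \norm{u}_1$ with $S_j = u_1 + \cdots + u_j$, but the crude bound above already suffices for the stated inequality.
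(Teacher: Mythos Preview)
Your argument is correct. The approach is essentially the same as the paper's --- both reduce the bound to an elementary oscillation estimate on the generating Hamiltonian --- but your version is slightly more streamlined: you invoke the homomorphism property $\phi_{v-w}=\phi_v\circ\phi_w^{-1}$ to work with the single Hamiltonian $H_{v-w}$ and bound $\osc(H_{v-w})\le 2\norm{v-w}_1$ directly, whereas the paper decomposes $H_{v-w}$ into its $d$ coordinate pieces $H^{v\to w}_i$ and reaches the same bound via the triangle inequality for $d_{\Hof}$ together with $\osc H^{v\to w}_i\le 2\abs{v_i-w_i}$. Since $H_{v-w}=\sum_i H^{v\to w}_i$, the two computations are equivalent; your observation that the true oscillation is $\max_j S_j-\min_j S_j$ with $S_j=u_1+\cdots+u_j$ is a nice sharpening, though unused here.
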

\begin{proof}
Let us consider the following family of autonomous Hamiltonians:
\begin{align*}
    H^{v \to w}_i: U &\to \Reals  \\
    (x,\xi_x) &\mapsto \int_0^{\norm{\xi_x^\sharp}} (w_i - v_i)\theta_i(t) dt,
\end{align*}
for $i \in \{1,\dots,d\}$.
They can be extended to all of $M$ by a constant outside $U$.
It is easy to check that 
\[(\phi_{H^{v \to w}_d} \circ \dots \circ \phi_{H^{v \to w}_1})(\phi_v(L_2)) = \phi_w(L_2).\]
By the triangle inequality for the Hofer metric, this thus implies 
\begin{equation}\label{eq:hofer_tri_ineq}d_{\Hof}(\phi_v(L_2), \phi_w(L_2)) \leq \sum_{i=1}^d \osc H^{v \to w}_i.\end{equation}
We note that 
\begin{align*}
    \osc H^{v \to w}_i &\leq 2\max \abs{H^{v \to w}_i} 
    = 2 \max_{c > 0} \abs*{ \int_0^c(w_i - v_i)\theta_i(t) dt} \\
    &\leq 2  \abs{w_i - v_i} \max_{c > 0} \int_0^c \theta_i(t) dt = 2 \abs{v_i - w_i}.
\end{align*}
By plugging this into~\eqref{eq:hofer_tri_ineq} we obtain
\begin{align*}d_{\Hof}(\phi_v(L_2), \phi_w(L_2)) \leq 2\norm{v-w}_1 \leq 2d \norm{v-w}_{\infty}.\end{align*}
\end{proof}
With this lemma in place, we can show the following finite-dimensional analogue of Theorem~\ref{thm:main_1}:
\begin{theorem}\label{thm:aux_1} 
    Let $(M, \omega = d\lambda)$ be a Liouville domain with $2c_1(M) = 0$ 
    and $L_0, L_1, L_2 \subset M$ exact Lagrangian spheres in an $A_3$-configuration. 
    Then, for any $d\in\mathbb{N}$, there is a map 
    \begin{align*}
        \Phi: (\Reals^d, d_\infty) &\into (\Lag(L_2), d_{\Hof})
    \end{align*}
    which is a quasi-isometric embedding with quasi-isometry constant $2d$, i.e.\ for any $v,w \in \Reals^d$
    \[\frac{1}{2d}d_\infty(v,w) \leq d_{\Hof}(\Phi(v),\Phi(w)) \leq 2d \cdot d_{\infty}(v,w).\]
\end{theorem}
\begin{proof}
    If $v,w \in \Reals$ are generic in the sense of Lemma~\ref{lem:lower_bound}, then we already obtain 
    \[\frac{1}{2d}\norm{v-w}_\infty \leq d_{\Hof}(\phi_v(L_2),\phi_w(L_2)) \leq 2d \norm{v-w}_{\infty} \]
    from Lemma~\ref{lem:lower_bound} and Lemma~\ref{lem:upper_bound}.
    For the general case we obtain the result by approximation.
    The assumption of being generic in Lemma~\ref{lem:lower_bound}  means that $\phi_v(L_2) \trans L_0$ and $\phi_w(L_2) \trans L_0$.
    This simply means that $2\pi v_i \not= \pm \delta \mod 2\pi$ and $2\pi w_i \not= \pm \delta \mod 2\pi$ for all $i \in \{1,\dots,d\}$.
    Thus, the set of exceptions is discrete.
    Assume $v^{(k)} \to v$ and $w^{(k)} \to w$ are sequences of generic points converging in the $\norm{\cdot}_\infty$-norm.
    Then by Lemma~\ref{lem:upper_bound}, $\phi_{v^{(k)}}(L_2) \to \phi_v(L_2)$ and $\phi_{w^{(k)}}(L_2) \to \phi_w(L_2)$ in the Hofer metric.
    Further, 
    \begin{align*}
        \frac{1}{2d}\norm{v-w}_\infty &= \lim_{k \to \infty} \frac{1}{2d} \norm{v^{(k)} - w^{(k)}}_\infty \leq \lim_{k \to \infty} d_{\Hof}(\phi_{v^{(k)}}(L_2), \phi_{w^{(k)}}(L_2)) \\ &= d_{\Hof}(\phi_v(L_2),\phi_w(L_2)) \leq 2d \norm{v-w}_{\infty},
    \end{align*}
    where we apply Lemma~\ref{lem:lower_bound} to $v^{(k)}$ and $w^{(k)}$ for all $k \in \Nats$.
\end{proof}
\begin{remark}\label{rmk:infinity_flats} 
    Given that we can find quasi-flats of any finite dimension, the question of a quasi-isometric embedding of $\Reals^\infty$ into $\Lag(L_2)$ is natural to ask.
    The above construction does not give us such an embedding.
    Most obviously, it fails due to Lemma~\ref{lem:upper_bound}.
    Since the constant in Lemma~\ref{lem:upper_bound} diverges as $d \to \infty$, the upper bound fails in the limit.
    However, even if we had a version of Lemma~\ref{lem:upper_bound} with a constant independent of $d$, we would not obtain an infinite-dimensional Hofer flat
    by the method laid out above.
    Indeed, it is not possible to choose $0 < \hat{h}_0 < \check{h}_1 < \hat{h}_1 < \cdots < 1$
    in such a way that the construction from Section~\ref{sec:setup} applied to $0 < \hat{h}_0 < \check{h}_1 < \hat{h}_1 < \cdots < \check{h}_d < \hat{h}_d < \check{h}_{d+1} < 1$ gives compatible results for all $d \in \Nats$.
    Thus, it is not possible to pass to the limit.
    However, both of these problems can be fixed by using a slightly different geometric construction as shown in Section~\ref{sec:infty_flats}.
    One feature of this is that we actually obtain two different families of finite-dimensional Hofer quasi-flats associated to 
    the two methods of construction.
\end{remark}
\begin{remark}
    Finite-dimensional versions of Corollary~\ref{cor:main_1} and Corollary~\ref{cor:main_2} follow by the same proof as in Theorem~\ref{thm:aux_1}.
    Note that Corollary~\ref{cor:main_2} is simply the local version and thus all arguments from Sections~\ref{sec:action_comp},~\ref{sec:index_comp} and~\ref{sec:spectral} apply.
    The case of the $A_2$-plumbing can be obtained similarly.
    We can describe the plumbing $A_2^n$ as the Liouville completion of the manifold obtained by attaching a handle to the boundary sphere 
    of the fiber $D^*_yS^n \subset D^*S^n$ for some $y \in S^n$. We then let $L_1$ be the sphere resulting from this handle-attachment and $L_0$ a fiber over 
    any point $x \in S^n \setminus \{y,-y\}$.
    Note that in the alternative --- more standard --- construction of gluing two disk cotangent bundles together via an identification that switches fiber and zero section in a neighborhood of the gluing point, this $L_1$ corresponds to one of the zero sections.
    The proof laid out above then gives the corresponding result for $A^n_2$, i.e.\ a finite-dimensional version of Corollary~\ref{cor:main_1}.
\end{remark}

\section{Infinite-dimensional Hofer quasi-flats}\label{sec:infty_flats}
We will now construct infinite-dimensional Hofer flats following up on Remark~\ref{rmk:infinity_flats}.
For this we need to choose data as in Section~\ref{sec:setup} in a compatible way for all $d \in \Nats$.
Choose $0 = \check{h}_0 < \hat{h}_0 < \check{h}_1 < \hat{h}_1 < \cdots < 1$.
In this case $\hbar$ would be $0$, so that we have to slightly modify the procedure 
given in Section~\ref{sec:setup}.
For any $i \in \Nats$ we choose a bump function $\theta_i \in C^\infty(\Reals,\interval[open right]{0}{\infty})$ such that 
\begin{enumerate}
    \item the support of $\theta_i$ is contained in $\interval[open]{\check{h}_i}{\hat{h}_i}$;
    \item the function $\theta_i$ has exactly one local maximum with value $2\pi$ and its derivative does not vanish elsewhere on the interior of its support;
    \item $\int_{\Reals} \theta_i(t) dt = 1$.
\end{enumerate}
For any $d \in \Nats$ we construct a group homomorphism $\Phi^{(d)}: (\Reals^d,+) \to  (\Ham_c(M), \circ)$ as laid out in Section~\ref{sec:setup}.
By using compatible data as described above, 
we can achieve that for any $m > 0$, $\Phi^{(d+m)}((v_1,\dots,v_d,0,\dots,0)) = \Phi^{(d)}(v)$ for any $v \in \Reals^d$.
Thus, by taking a direct limit, we obtain 
\begin{align*}
        \Phi^{(\infty)}: \Reals^\infty &\into \Ham_c(M) \\
        v &\mapsto \phi_v
\end{align*}
as a well-defined map.
It remains to adapt the construction of our preferred Dehn twists.
For this, we choose a function $\rho_i \in C^\infty(\Reals,\interval[open right]{0}{\infty})$ for any $i \in \Nats_0$ such that 
\begin{enumerate}
    \item $\rho_i$ is monotone;
    \item the function $\rho_i$ is equal to $\pi$ on an open interval that contains $\interval[open left]{-\infty}{\check{h}_i}$;
    \item the function $\rho_i$ vanishes on an open interval that contains $\interval[open right]{\hat{h}_{i+1}}{\infty}$;
\end{enumerate}
Then we denote the model Dehn twist defined by using $\rho_i$ by $\tau_i: M \to M$ for any $i \in \Nats_0$.
As in Section~\ref{sec:setup}, the supports of $\phi_v$ and $\tau_0^2, \dots$ are pairwise disjoint for any $v \in \Reals^\infty$.

Upon inspection of the proof of Lemma~\ref{lem:lower_bound}, we note that the above properties of $\theta_1,\dots,\theta_d$ and $\rho_0,\dots,\rho_d$
are sufficient for the argument.
Thus, since for any $v,w \in \Reals^\infty$ there is some $D \in \Nats$ with $v,w \in \Reals^D$, we can apply Lemma~\ref{lem:lower_bound}
to obtain the following corollary:
\begin{corollary}\label{cor:infty_lower_bound}
Let $v,w \in \Reals^\infty$ be generic. Then 
\[\frac{1}{2}\norm{v-w}_{\infty} \leq d_{\Hof}(\Phi^{(\infty)}(v)(L_2),\Phi^{(\infty)}(w)(L_2)).\]
\end{corollary}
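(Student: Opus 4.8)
The plan is to reduce the statement to the finite-dimensional lower bound already established in Lemma~\ref{lem:lower_bound}. The key observation is that any element of $\Reals^\infty$ has only finitely many nonzero coordinates, so given generic $v,w \in \Reals^\infty$ we may pick $D \in \Nats$ large enough that both $v$ and $w$ lie in the image of the standard inclusion $\Reals^D \into \Reals^\infty$. By the compatibility of the construction in this section --- namely that $\Phi^{(d+m)}((v_1,\dots,v_d,0,\dots,0)) = \Phi^{(d)}(v)$, which forces $\Phi^{(\infty)}$ restricted to $\Reals^D$ to equal $\Phi^{(D)}$ --- we then have $\Phi^{(\infty)}(v)(L_2) = \Phi^{(D)}(v)(L_2)$ and likewise for $w$. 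So it suffices to prove the inequality with $\Phi^{(D)}$ in place of $\Phi^{(\infty)}$.

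First I would inspect which features of the data $\theta,\rho$ the proof of Lemma~\ref{lem:lower_bound} actually uses. These are: (i) that the supports of $\phi_v$ and of the model Dehn twists $\tau_0^2,\dots,\tau_D^2$ are pairwise disjoint and arranged in order of increasing radius, so that $\tau_i^{2k}(L_0)\cap\phi_v(L_2)$ splits as $L_0\cap\phi_v(L_2)\amalg\tau_i^{2k}(L_0)\cap L_2$; (ii) the degree bounds from Section~\ref{sec:index_comp}, which only need $\im\abs{\theta_v}\subset\interval{0}{2\pi\norm{v}_\infty}$ and the fact that each $\tau_i$ is a reparametrized cogeodesic flow with $\tau_i^2$ supported in the band $M_{\interval[open]{\hat{h}_i}{\check{h}_{i+1}}}$, together with the Frauenfelder--Schlenk computation of $HF(\tau_i^{2k}(L_0),L_2)$; and (iii) the normalization $\int_\Reals\theta_i = 1$ with $\supp\theta_i\subset\interval[open]{\check{h}_i}{\hat{h}_i}$, which is exactly what makes the action difference collapse to $a_i(v)-a_i(w) = w_{i+1}-v_{i+1}$. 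All three properties are guaranteed by the modified choices of $\theta_i$ and $\rho_i$ made at the beginning of this section, so the entire argument of Lemma~\ref{lem:lower_bound} goes through verbatim for the family $\Phi^{(D)}$.

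With this in hand I would simply apply Lemma~\ref{lem:lower_bound} in dimension $D$, using that $v,w$ being generic in $\Reals^\infty$ means in particular $2\pi v_i\not=\pm\delta \mod 2\pi$ and $2\pi w_i\not=\pm\delta \mod 2\pi$ for all $i$, so that $\Phi^{(D)}(v)(L_2)\trans L_0$ and $\Phi^{(D)}(w)(L_2)\trans L_0$. This yields
\[
\frac{1}{2}\norm{v-w}_\infty \leq d_{\Hof}(\Phi^{(D)}(v)(L_2),\Phi^{(D)}(w)(L_2)) = d_{\Hof}(\Phi^{(\infty)}(v)(L_2),\Phi^{(\infty)}(w)(L_2)),
\]
as desired. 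The only real point of care --- hence the main obstacle, such as it is --- is the bookkeeping confirming that the model Dehn twists and Hamiltonians built in the $d=\infty$ setup genuinely restrict to those in the $d=D$ setup; but this is precisely what the compatibility of the data recorded just before the corollary provides.
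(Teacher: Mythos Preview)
Your proposal is correct and follows essentially the same approach as the paper: the paper simply observes that any $v,w\in\Reals^\infty$ lie in some $\Reals^D$, notes that the properties of the modified $\theta_i$ and $\rho_i$ suffice for the proof of Lemma~\ref{lem:lower_bound} to go through, and applies that lemma. Your write-up is in fact more explicit than the paper's one-sentence justification in identifying exactly which features (disjoint supports, degree bounds, normalization) are used.
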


\begin{figure}[ht] 
    \centering 
    \includegraphics[width=0.95\textwidth]{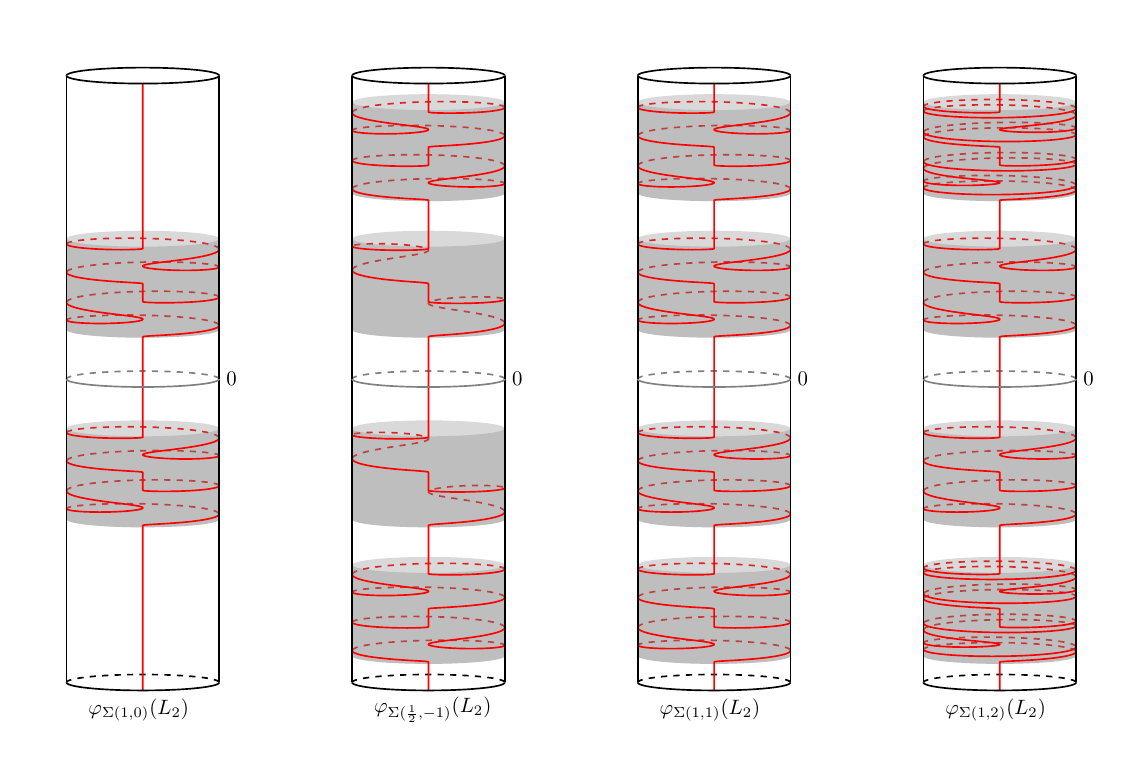} 
    \caption{Sketch of Lagrangians obtained by applying $(\Phi^{(\infty)} \circ \Sigma)(v)$ to $L_2$ for different $v \in \Reals^2 \subset \Reals^\infty$.
    The support of the Hamiltonian \textit{function} generating the diffeomorphism is marked in gray. Note that the different regions now have 
    disjoint support (of the Hamiltonian function not just the diffeomorphism) which was not the case for the earlier construction.}\label{fig:sigma_visualization}
\end{figure}
The proof of Lemma~\ref{lem:upper_bound} however, as mentioned before, cannot be directly adapted since $\norm{\cdot}_1$ and $\norm{\cdot}_\infty$ are not 
equivalent norms on $\Reals^\infty$.
By using a trick, we can rectify this and give a --- slightly different --- construction for $\infty$-dimensional flats. 
For this we first define the following map:
\begin{align*}
\Sigma: \Reals^\infty &\to \Reals^\infty \\
v &\mapsto (v_1,-v_1,v_2,-v_2,\dots).
\end{align*}
We will later construct an explicit Hamiltonian isotopy from $(\Phi^{(\infty)} \circ \Sigma)(v)(L_2)$ to $(\Phi^{(\infty)} \circ \Sigma)(w)(L_2)$.
Since neighboring radial bands are controlled by $v_{i}$ and $-v_i$, 
we can decompose the Hamiltonian function generating this diffeomorphism into different Hamiltonians with pairwise disjoint support.
See Figure~\ref{fig:sigma_visualization} for a visualization of this idea. We also note that with $\Sigma$ applied, the Hamiltonian diffeomorphism
$(\Phi^{(\infty)} \circ \Sigma)(v)$ is similar to the reparametrizations of the geodesic flow used in~\cite{usher-2014}.
Using this strategy, we can show the following lemma, which directly implies Theorem~\ref{thm:main_1}. The proof of Corollary~\ref{cor:main_1} and Corollary~\ref{cor:main_2} is completely analogous in the respective setting.
{Note that the local model and thus the index and action computations apply to these settings directly. Since Lagrangian Floer homology is well-defined in both cases (see Section~\ref{sec:prelims}),
the proof can be applied verbatim.}
\begin{lemma}
For any $v,w \in \Reals^\infty$ we have 
\[\frac{1}{2}\norm{v-w}_\infty \leq d_{\Hof}((\Phi^{(\infty)} \circ \Sigma)(v)(L_2),(\Phi^{(\infty)} \circ \Sigma)(w)(L_2))
\leq 2\norm{v-w}_\infty.\]
\end{lemma}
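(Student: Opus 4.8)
The plan is to prove the two inequalities by separate arguments: the lower bound by reducing to Corollary~\ref{cor:infty_lower_bound}, and the upper bound by a direct Hofer-norm computation that exploits the alternating sign pattern built into $\Sigma$. Two elementary facts about $\Sigma$ will be used throughout. First, $\Sigma$ is linear, so $\Sigma(w)-\Sigma(v)=\Sigma(w-v)$, and the entries of $\Sigma(w-v)$ are exactly $\pm(w_i-v_i)$, whence $\norm{\Sigma(w)-\Sigma(v)}_\infty=\norm{w-v}_\infty$. Second, recalling from the proof of Theorem~\ref{thm:aux_1} that $u\in\Reals^\infty$ is \emph{generic} (meaning $\Phi^{(\infty)}(u)(L_2)\trans L_0$) precisely when $2\pi u_j\neq\pm\delta\bmod 2\pi$ for all $j$, and since this condition is insensitive to the signs of the coordinates, $\Sigma(v)$ is generic if and only if $2\pi v_j\neq\pm\delta\bmod 2\pi$ for all $j$.

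For the lower bound, I would first take $v,w$ generic in the sense just described, so that $\Sigma(v),\Sigma(w)$ are generic, and apply Corollary~\ref{cor:infty_lower_bound} to the pair $\Sigma(v),\Sigma(w)$:
\[\tfrac12\norm{v-w}_\infty=\tfrac12\norm{\Sigma(v)-\Sigma(w)}_\infty\leq d_{\Hof}\bigl((\Phi^{(\infty)}\circ\Sigma)(v)(L_2),\,(\Phi^{(\infty)}\circ\Sigma)(w)(L_2)\bigr).\]
The general case follows by approximation: $v,w$ lie in a common $\Reals^D$, so one can pick generic sequences $v^{(k)}\to v$, $w^{(k)}\to w$, and the upper bound below guarantees $(\Phi^{(\infty)}\circ\Sigma)(v^{(k)})(L_2)\to(\Phi^{(\infty)}\circ\Sigma)(v)(L_2)$ and similarly for $w$ in the Hofer metric; passing to the limit in the displayed inequality then gives it for $v,w$. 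This is exactly the approximation step already carried out for Theorem~\ref{thm:aux_1}.

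For the upper bound, I would use that $\Phi^{(\infty)}$ is a group homomorphism $(\Reals^\infty,+)\to(\Ham_c(M),\circ)$ and that $\Sigma$ is linear, so that $(\Phi^{(\infty)}\circ\Sigma)(w)\circ(\Phi^{(\infty)}\circ\Sigma)(v)^{-1}=\Phi^{(\infty)}(\Sigma(w-v))$; consequently
\[d_{\Hof}\bigl((\Phi^{(\infty)}\circ\Sigma)(v)(L_2),\,(\Phi^{(\infty)}\circ\Sigma)(w)(L_2)\bigr)\leq\norm{\Phi^{(\infty)}(\Sigma(w-v))}_{\Hof}\leq\osc H_{\Sigma(w-v)},\]
where $H_{\Sigma(w-v)}$ is the autonomous generating Hamiltonian, given in the local model by $(x,\xi_x)\mapsto\int_0^{\norm{\xi_x^\sharp}}\theta_{\Sigma(w-v)}(t)\,dt$ with $\theta_{\Sigma(w-v)}=\sum_j(\Sigma(w-v))_j\,\theta_j$. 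The key computation is that of $\osc H_{\Sigma(w-v)}$: since the bumps $\theta_j$ have pairwise disjoint supports ordered along $\check h_1<\hat h_1<\check h_2<\hat h_2<\cdots$ and each integrates to $1$, the primitive $s\mapsto\int_0^s\theta_{\Sigma(w-v)}$ is constant between consecutive bands and changes by $(\Sigma(w-v))_j$ across band $j$; because the two bands $2i-1$ and $2i$ carry the opposite coefficients $w_i-v_i$ and $-(w_i-v_i)$, the running value returns to $0$ after each such pair. Hence $\im H_{\Sigma(w-v)}\subseteq\{0\}\cup\bigcup_i[\min(0,w_i-v_i),\max(0,w_i-v_i)]$, so $\osc H_{\Sigma(w-v)}=\max H_{\Sigma(w-v)}-\min H_{\Sigma(w-v)}\leq 2\norm{w-v}_\infty$, giving the asserted bound.

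The hard part is this last oscillation estimate — verifying that interleaving the coordinates with alternating signs via $\Sigma$ genuinely confines the primitive of $\theta_{\Sigma(w-v)}$, and hence $\osc H_{\Sigma(w-v)}$, to $2\norm{\cdot}_\infty$ rather than the $\norm{\cdot}_1$ bound of Lemma~\ref{lem:upper_bound}. This requires tracking the running integral band by band and checking the cancellation within each pair $(2i-1,2i)$; it is precisely the mechanism depicted in Figure~\ref{fig:sigma_visualization} and the one genuinely new ingredient compared with Section~\ref{sec:flats}. The remaining pieces — the homomorphism identity, the reduction of the lower bound to Corollary~\ref{cor:infty_lower_bound}, and the genericity/approximation argument — are routine given the earlier sections.
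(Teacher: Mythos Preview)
Your proposal is correct and follows essentially the same route as the paper. For the lower bound you invoke Corollary~\ref{cor:infty_lower_bound} via $\norm{\Sigma(v)-\Sigma(w)}_\infty=\norm{v-w}_\infty$ and then approximate, exactly as the paper does; for the upper bound you bound $\osc H_{\Sigma(w-v)}$ by tracking the primitive of $\theta_{\Sigma(w-v)}$ band by band and using that it returns to $0$ after each pair $(2i-1,2i)$, whereas the paper expresses the same Hamiltonian as a sum $H^{v\to w}=\sum_i H_i^{v\to w}$ of pieces with pairwise disjoint supports and bounds $\osc H^{v\to w}\leq 2\max_i\osc H_i^{v\to w}=2\norm{v-w}_\infty$ --- this is the identical estimate phrased slightly differently.
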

\begin{proof}
    The left-hand side is easy to show.
    Note that $\norm{\Sigma v}_\infty = \norm{v}_\infty$.
    Thus, we can apply Corollary~\ref{cor:infty_lower_bound} together with an approximation argument as in the proof of Theorem~\ref{thm:aux_1}.

    It remains to show the right-hand inequality.
    For this we fix $v,w \in \Reals^\infty$ for the remainder of the proof.
    Let $d\in\Nats$ be chosen such that $v,w \in \Reals^d$.
    Let us again start by considering the following family of autonomous compactly supported Hamiltonians:
    \begin{align*}
        H^{v \to w}_i: U &\to \Reals  \\
        (x,\xi_x) &\mapsto \int_0^{\norm{\xi_x^\sharp}} (w_i - v_i)\theta_{2i-1}(t) - (w_i - v_i)\theta_{2i}(t) dt,
    \end{align*}
    for $i \in \{1,\dots,d\}$.
    They can be extended to $M$ by $0$ outside $U$.
    Note that any $H^{v \to w}_i$ is supported in the interior of the radial band $\interval{\check{h}_{2i-1}}{\hat{h}_{2i}}$, where the norm is taken with respect to the local model in $U$.
    We can explicitly compute the oscillation of $H^{v \to w}_i$ as 
    \[\osc H^{v \to w}_i = \abs{w_i-v_i},\]
    where we use that $\theta_{2i-1}$ and $\theta_{2i}$ have disjoint support and that $\int_{\Reals} \theta_{2i-1} = \int_\Reals \theta_{2i} = 1$.
    We can then define $H^{v \to w} \coloneqq H^{v \to w}_1 + \cdots + H^{v \to w}_d$.
    By construction 
    \[\phi_{H^{v \to w}}((\Phi^{(\infty)} \circ \Sigma)(v)(L_2)) = (\Phi^{(\infty)} \circ \Sigma)(w)(L_2).\]
    Now since all $H^{v \to w}_1 , \dots , H^{v \to w}_d$ have pairwise disjoint support, we have 
    \begin{align*}
        \osc H^{v \to w} &\leq 2\max_{i \in \{1,\dots,d\}}\left( \osc {H^{v \to w}_i} \right)
        = 2 \max_{i \in \{1,\dots,d\}} \left( \abs{v_i-w_i}\right)\\
        &= 2 \norm{v-w}_\infty.
    \end{align*}
    Thus, $d_{\Hof}((\Phi^{(\infty)} \circ \Sigma)(v)(L_2),(\Phi^{(\infty)} \circ \Sigma)(w)(L_2))
    \leq 2\norm{v-w}_\infty$ and the proof is complete since $v,w \in \Reals^\infty$ were arbitrary.
\end{proof}
\begin{remark}
To conclude Corollary~\ref{cor:absolute_version} it is sufficient to notice two things. 
Firstly, the quasi-isometric embedding is of course given by $\Psi \coloneqq \Phi^{(\infty)} \circ \Sigma: \Reals^\infty \into \Ham_c(M)$. We notice immediately that 
for any $v,w \in \Reals^\infty$, 
\[\frac{1}{2}\norm{v-w}_{\infty} \leq d_{\Hof}(\Psi^{(\infty)}(v)(L_2),\Psi^{(\infty)}(w)(L_2)) \leq \norm{\Psi^{(\infty)}(w)(\Psi^{(\infty)}(v))^{-1}}_{\Hof}.\] 
Secondly, notice that the Hamiltonian $H^{v \to w}$ constructed above actually generates $\Psi^{(\infty)}(w)(\Psi^{(\infty)}(v))^{-1}$.
Thus, by the computation from the last proof, \[\norm{\Psi^{(\infty)}(w)(\Psi^{(\infty)}(v))^{-1}}_{\Hof} \leq 2\norm{v-w}_\infty,\]
which implies Corollary~\ref{cor:absolute_version}.
\end{remark}

\section{Applications to the boundary depth}\label{sec:boundary_depth}
Given that we computed all indices and actions for generators of $CF(\tau_i^k(L_0), \phi_v(L_2))$, 
we can also make some conclusions about the boundary depth of this complex.
{In this section we will only need the embedding $\Phi: \Reals^1 \into \Lag(L_2)$ constructed in the proof of Theorem~\ref{thm:aux_1}, i.e.\ the $d=1$ case.}
So, we fix some data as in Section~\ref{sec:setup} and keep the notation of the previous sections.
We denote $\Phi(k)$ by $\phi_{(k)}$ consistent with the $\phi_v$ notation used in the previous sections.
We will now consider the specific Floer complex $CF(L_0, \phi_{(k)}(L_2))$ for all $k \in \Nats$. 
First, we want to note that we can describe the intersection points $L_0 \cap \phi_{(k)}(L_2)$
explicitly.
Indeed, they are characterized by the fact that for $\xi \in L_0 \cap \phi_{(k)}(L_2)$ we have
\begin{align}
    \label{eq:intersection_cond_1}k \cdot \theta_{1}(\norm{\xi^\sharp}) &= \delta + 2\pi\cdot m \\ 
    \label{eq:intersection_cond_2}k \cdot \theta_{1}(\norm{\xi^\sharp}) &= 2\pi - \delta + 2\pi\cdot m,
\end{align}
{where $\theta_v$ is the bump function used in the construction of $\phi_v$ as in Section~\ref{sec:setup}. We keep the notation from that section.
Recall that $\theta_1$ is a shorthand for $\theta(\cdot - \check{h}_1)$.
As before, $\delta$ denotes the distance between the intersection points in $L_1 \cap L_0$ and $L_1 \cap L_2$
with respect to the round metric induced by the framing of $L_1$.
}
The norms above are again taken with respect to the round metric in the local model induced by the framing of $L_1$.
Note that both for~\eqref{eq:intersection_cond_1} and~\eqref{eq:intersection_cond_2} we want to allow $m = 0$.
We consider both equations as equations in a formal variable $t$, i.e.\ 
{
\begin{align*}
    k \cdot \theta_{1}(t) &= \delta + 2\pi\cdot m \\ 
    k \cdot \theta_{1}(t) &= 2\pi - \delta + 2\pi\cdot m.
\end{align*}}
By the choice of $\theta$ we know that both equations will each have $2k$ {solutions for t.} 
Due to the range of $\theta$, such a solution will exist {exactly} for $m \in \{0,\dots,k-1\}$.
Of those there will be $k$ solutions with values below $\check{h}_1 + \frac{\hbar}{2}$ and $k$ with values above $\check{h}_1 + \frac{\hbar}{2}$.
Denote the solution to $k\theta_{1}(t) = \delta + 2\pi \cdot m$ with $t <\check{h}_1 + \frac{\hbar}{2}$ by $\check{t}^+_{k,m}$.
Similarly, we denote the solutions with $t > \check{h}_1 + \frac{\hbar}{2}$ by $\hat{t}^+_{k,m}$.
Thus, we get \[\check{t}^+_{k,0} < \cdots < \check{t}^+_{k,k-1} < \check{h}_1 + \frac{\hbar}{2} < \hat{t}^+_{k,k-1} < \cdots < \hat{t}^+_{k,0}.\]
For the solutions of $k\theta_{1}(t) = 2\pi - \delta + 2\pi\cdot m$ we follow the same naming scheme. Namely, we obtain the solutions 
\[\check{t}^-_{k,0} < \cdots < \check{t}^-_{k,k-1} <  \check{h}_1 + \frac{\hbar}{2}< \hat{t}^-_{k,k-1} < \cdots < \hat{t}^-_{k,0},\]
which satisfy {$k\theta_{1}(\hat{t}^-_{k,m}) = k \theta_{1}(\check{t}^-_{k,m}) = 2\pi - \delta + 2\pi m$.}
See Figure~\ref{fig:ts} for a visualization.
{These formal solutions correspond to actual intersection points.
Recall that we denote the intersection point of $L_0$ and $L_1$ by $x$ and the intersection point of $L_1$ and $L_2$ by $y$.
Let $\gamma: \interval{0}{\delta} \to L_1$ be the unit speed geodesic on $L_1$ (endowed with the round metric as before) going from $y$ to $x$.
Then $\{\pm \check{t}^\pm_{k,m}\dot\gamma(0)\}_{m \in \{0,\dots,k-1\}}$ and $\{\pm \hat{t}^\pm_{k,m}\dot\gamma(0)\}_{m \in \{0,\dots,k-1\}}$ correspond to intersection points $L_0 \cap \phi_v(L_2)$.
We denote the intersection point corresponding to $\check{t}^\pm_{k,m}$ by $\check{c}^\pm_{k,m}$ and 
the intersection point corresponding to $\hat{t}^\pm_{k,m}$ by $\hat{c}^\pm_{k,m}$.
Note in particular, that $\norm{(\check{c}^\pm_{k,m})^\sharp} = \check{t}^\pm_{k,m}$ and $\norm{(\hat{c}^\pm_{k,m})^\sharp} = \hat{t}^\pm_{k,m}$.}
\begin{figure}[ht]
    \centering 
    \includegraphics[width=0.9\textwidth]{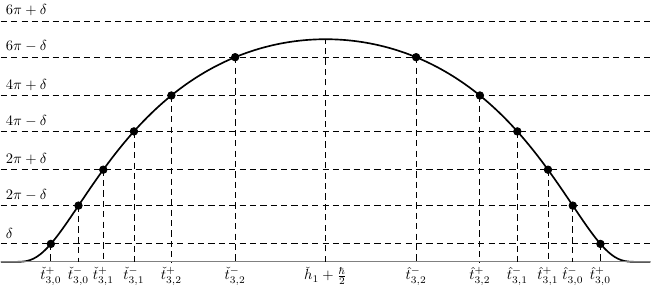}
    \caption{An illustration of the ordering of the $\check{t}^\pm_{k,m}$ and $\hat{t}^\pm_{k,m}$ for $k = 3$.}\label{fig:ts}
\end{figure}

After these preparatory remarks, we can show the following lemma which directly implies Theorem~\ref{thm:main_3}.
\begin{lemma}\label{lem:boundary_depth_unbounded}
    With the notation established above, $\beta(HF(\tau_0^{2\ell}(L_0), \phi_{(k)}(L_2))) \to \infty$ as $k \to \infty$ where $\ell \in \Nats_0$ is fixed.  
\end{lemma}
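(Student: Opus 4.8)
The plan is to read off, directly from the (unperturbed) complex $CF\big(\tau_1^\ell(L_0),\phi_{(k)}(L_2)\big)$, one finite bar whose length grows linearly in $k$; the statement is symmetric under interchanging $L_0$ and $L_2$ and under reversing the order of the Floer arguments, neither of which affects $\beta$, so this suffices. First I would record the combinatorial picture. As $\ell$ is even, $\tau_1^\ell=\sigma(\ell\pi)$ is the identity near $L_1$, so $\supp\phi_{(k)}\subset M_{(\check h_1,\hat h_1)}$ and $\supp\tau_1^\ell\subset M_{(\hat h_1,\check h_2)}$ are disjoint and the intersection points split as $\big(L_0\cap\phi_{(k)}(L_2)\big)\sqcup\big(\tau_1^\ell(L_0)\cap L_2\big)$, the first set being the $4k$ points $\check t^{\pm}_{k,m},\hat t^{\pm}_{k,m}$ introduced above. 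Using Lemma~\ref{lem:degrees_phi} I would check that every point of the first set has degree $\leq 1$, with degree exactly $1$ only when it lies on the increasing branch of $\theta_1$ and satisfies $\lfloor k\theta_1/\pi\rfloor=0$; since $0<\delta<\pi$, this singles out $e\coloneqq\check t^{+}_{k,0}$, and $\hat t^{+}_{k,0}$ has degree $0$. Using Lemma~\ref{lem:degrees_tau} together with the computation $HF(\tau_1^\ell(L_0),\phi_{(k)}(L_2))\cong HF(\tau_1^\ell(L_0),L_2)$, which is supported in degrees $\geq n\geq 2$ for even $\ell$ and vanishes for $\ell=0$ since $L_0\cap L_2=\emptyset$ (see~\cite{frauenfelder-schlenk-2005}), I would conclude that every point of the second set has degree $\geq n\geq 2$, that $e$ is the unique degree-$1$ generator of the whole complex, that all generators of degrees $0$ and $-1$ are $\phi_{(k)}$-points, and that $HF_0=HF_1=0$.

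Next I would compute the two actions that matter. On $M_{(\check h_1,\hat h_1)}$ the diffeomorphism $\tau_1^\ell$, and hence the primitive of $\tau_1^\ell(L_0)$, equals the identity, so~\eqref{eq:primitive_phi} gives the action of the $\phi_{(k)}$-point at radius $t$ as $g_k(t)\coloneqq k\theta_1(t)\,t-k\int_0^t\theta_1(s)\,ds$, with $g_k'(t)=k\theta_1'(t)\,t$; thus $g_k$ rises from $0$ at the left edge of $\supp\theta_1$ to a single maximum at the peak of $\theta_1$ and then falls to $-k$ at the right edge. The rightmost generator is $\hat t^{+}_{k,0}$ (it sits at the smallest attained rotation angle $\delta$, on the decreasing branch), so it realises the strict minimum of $g_k$ over all $\phi_{(k)}$-points; consequently $\A(\hat t^{+}_{k,0})=g_k(\hat t^{+}_{k,0})$ tends to $-\infty$ (it equals $-k+O(1)$), while $\A(e)=g_k(\check t^{+}_{k,0})\in(0,\pi)$ stays bounded. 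Since every degree-$(-1)$ generator is a $\phi_{(k)}$-point of strictly larger action, $\hat t^{+}_{k,0}$ is a cycle.

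The last step assembles the bar. Because $HF_0=0$, the cycle $\hat t^{+}_{k,0}$ is a boundary, and as the degree-$1$ part of the complex is the line $\field_2\cdot e$, this forces $\partial e=\hat t^{+}_{k,0}$. Hence the class $[\hat t^{+}_{k,0}]\in H_0(CF^\lambda)$ is non-zero for $\A(\hat t^{+}_{k,0})<\lambda\leq\A(e)$ and vanishes for $\lambda>\A(e)$ (the only degree-$1$ chain of action below $\A(e)$ being $0$), so the barcode of this complex contains a finite bar of length at least $\A(e)-\A(\hat t^{+}_{k,0})\to\infty$, which is the claim. I expect the only step requiring genuine care to be the index arithmetic that isolates $e$ as the unique degree-$1$ generator: when $n=2$ the $m=0$ Dehn-twist generators already sit in degree $2$, so the complex need not split into an inner and an outer summand, and it is the vanishing $HF_0=0$ --- rather than any such splitting --- that pins down $\partial e=\hat t^{+}_{k,0}$. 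The sign-and-floor bookkeeping of Lemmas~\ref{lem:degrees_phi} and~\ref{lem:degrees_tau} and the elementary shape of $g_k$ are otherwise routine.
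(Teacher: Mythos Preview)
Your argument is correct and follows the same overall line as the paper: single out the bar running from $\hat c^{+}_{k,0}$ to $e=\check c^{+}_{k,0}$, compute both actions from~\eqref{eq:primitive_phi}, and observe that the difference diverges.

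The only genuine difference is in how you establish $\partial e=\hat c^{+}_{k,0}$. The paper treats $n>2$ and $n=2$ separately: for $n>2$ there is a unique degree-$0$ generator, while for $n=2$ one must exclude the second degree-$0$ generator $\check c^{-}_{k,0}$, which the paper does by checking $\A(\check c^{-}_{k,0})\geq\A(e)$ so that the action-decreasing differential cannot hit it. You instead argue uniformly: since $\hat c^{+}_{k,0}$ has the minimal action among all $\phi_{(k)}$-intersections and every degree-$(-1)$ generator is such an intersection, it is a cycle; then $HF_0=0$ forces it to lie in $\operatorname{im}(\partial\vert_{CF_1})=\operatorname{span}(\partial e)$, whence $\partial e=\hat c^{+}_{k,0}$. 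This avoids the case split and is a modest simplification. Your minimality claim is in fact valid for every $k$, not only asymptotically: for any other $\phi_{(k)}$-point at radius $t'$ one has $g_k(\hat t^{+}_{k,0})-g_k(t')=\delta\,\hat t^{+}_{k,0}-k\theta_1(t')\,t'-k\!\int_{t'}^{\hat t^{+}_{k,0}}\theta_1$, and using that $k\theta_1>\delta$ on $(\check t^{+}_{k,0},\hat t^{+}_{k,0})$ together with the monotonicity of $g_k$ on each branch gives the strict inequality.
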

\begin{proof}
    {First, we note that by construction $\tau_0^{2\ell}$ is the identity on the radial shell from $0$ to $\hat{h}_0$ for any $\ell \in \Nats_0$.
    This follows directly from the fact that $\tau_0$ is the antipodal map in this region.
    It is also the identity on the radial shell outwards from $\check{h}_1$.
    Thus, the intersection points constructed above persist into this setting. 
    
    To begin with the main part of the proof, we first note the following fact:}
    Since $\theta'\vert_{\interval[open]{\iota}{\frac{\hbar}{2}}} > 0$ and $\theta'\vert_{\interval[open]{\frac{\hbar}{2}}{\hbar-\iota}} < 0$ we have $\check{t}^{\pm}_{k,0} > \check{t}^{\pm}_{k+1,0}$ and $\hat{t}^{\pm}_{k+1,0} > \hat{t}^{\pm}_{k,0}$ for all $k \in \Nats$.
    In particular, $\check{t}^{\pm}_{1,0} > \check{t}^{\pm}_{k,0}$ and $\hat{t}^{\pm}_{k,0} > \hat{t}^{\pm}_{1,0}$ for all $k \in \Nats$.

    Assume that $\partial{\check{c}^{+}_{k,0}} = \hat{c}^{+}_{k,0}$.
    Under this assumption the barcode of $HF(\tau_0^{2\ell}(L_2), \phi_{(k)}(L_0))$ must contain a bar of length 
    $\A(\check{c}^{+}_{k,0}) - \A(\hat{c}^{+}_{k,0})$. 
    This implies \[\beta(HF(\tau_0^{2\ell}(L_2), \phi_{(k)}(L_0))) \geq \A(\check{c}^{+}_{k,0}) - \A(\hat{c}^{+}_{k,0}).\]
    Given that we have explicit primitives for $\lambda$ on the relevant Lagrangians, we can compute this bound explicitly.
    By using~\eqref{eq:primitive_phi} we obtain {that 
    \begin{align*}
        \A(\check{c}^{+}_{k,0}) - \A(\hat{c}^{+}_{k,0}) &= h_{\phi_{(k)}(L_2)}(\check{c}^{+}_{k,0}) - h_{\tau_0^{2\ell}(L_0)}(\check{c}^{+}_{k,0})
        - (h_{\phi_{(k)}(L_2)}(\hat{c}^{+}_{k,0}) - h_{\tau_0^{2\ell}(L_0)}(\hat{c}^{+}_{k,0})) \\
        &=  \delta (\check{t}^{+}_{k,0} - \hat{t}^{+}_{k,0}) +  k \int_{\check{t}^+_{k,0}}^{\hat{t}^+_{k,0}}\theta_{1}(s)ds 
        + (h_{\tau_0^{2\ell}(L_0)}(\hat{c}^{+}_{k,0})- h_{\tau_0^{2\ell}(L_0)}(\check{c}^{+}_{k,0})).
    \end{align*}
    Recall that $\rho_0\vert_{\interval[open right]{\check{h}_1}{\infty}} = 0$ and thus by~\eqref{eq:primitive_tau} 
    \begin{align*}
        h_{\tau_0^{2\ell}(L_0)}(\hat{c}^{+}_{k,0})- h_{\tau_0^{2\ell}(L_0)}(\check{c}^{+}_{k,0})
        = f(\hat{c}^{+}_{k,0}) - f(\check{c}^{+}_{k,0}) \geq C,
    \end{align*}
    where $C \coloneqq -2\max\{\abs{f(\xi)} \mid \xi \in \supp \phi_{(1)}\}$.
    Here we use that, since $\hat{t}^{+}_{k,0} > \check{t}^{+}_{k,0} > \check{h}_1$, all 
    other terms in~\eqref{eq:primitive_tau} are equal for both and thus cancel out.}
    
    Note that by construction $\theta_{1}\vert_{\interval{\check{t}^+_{k,0}}{\hat{t}^+_{k,0}}} \geq \delta$.
    {Thus, we can see from the above expression that $\A(\check{c}^{+}_{k,0}) - \A(\hat{c}^{+}_{k,0})$ is always strictly positive for large enough $k$.}
    That is not quite enough for the desired conclusion yet.
    By using $\check{t}^{\pm}_{1,0} > \check{t}^{\pm}_{k,0}$ and $\hat{t}^{\pm}_{k,0} > \hat{t}^{\pm}_{1,0}$ for all $k \in \Nats$,
    we can make the following estimate:
    \begin{align*}
        \A(\check{c}^{+}_{k,0}) - \A(\hat{c}^{+}_{k,0}) 
        &\geq  k \int_{\check{t}^+_{k,0}}^{\hat{t}^+_{k,0}}\theta_{1}(s)ds - \delta (\hat{t}^{+}_{k,0} - \check{t}^{+}_{k,0}) + C \\
        &\geq k \int_{\check{t}^+_{1,0}}^{\hat{t}^+_{1,0}}\theta_{1}(s)ds - \delta (\hat{t}^{+}_{1,0} - \check{t}^{+}_{1,0}) + C.
    \end{align*}
    Clearly, this implies the conclusion, i.e. $\beta(HF(\tau_0^{2\ell}(L_2), \phi_{(k)}(L_0))) \to \infty$ as $k \to \infty$.

    Thus, it remains to show that $\partial\check{c}^{+}_{k,0} = \hat{c}^{+}_{k,0}$.
    By Lemma~\ref{lem:degrees_phi} we know that {
    \begin{align*}
        \mu(\check{c}^+_{k,j}) &= n + (2\ell+2j)(n-1) & \mu(\hat{c}^+_{k,j}) &= n + (2\ell+2j)(n-1)- 1 \\
        \mu(\check{c}^-_{k,j}) &= n + (2\ell+2j+1)(n-1) & \mu(\hat{c}^+_{k,j}) &= n + (2\ell+2j+1)(n-1)- 1 
    \end{align*}}
    Lemma~\ref{lem:degrees_tau} further implies that all other intersection points have a degree {below or equal to $n + (2\ell - 1)(n-1)$ if $\ell \geq 1$.
    If $\ell = 0$ there are no other intersection points.}

    We first consider the case $n=1$. The possible pseudoholomorphic disks in that case are easy to understand.
    We can see from the open mapping theorem, that such disks can only connect $\hat{c}^{+}_{k,m}$ with $\check{c}^{+}_{k,m}$ and
    $\hat{c}^{-}_{k,m}$ with $\check{c}^{-}_{k,m}$ for $m \in \{0,\dots,k-1\}$. See Figure~\ref{fig:holomorpic_disks} 
    for a visualization.
    By counting these disks, we directly see that $\partial\check{c}^{+}_{k,0} = \hat{c}^{+}_{k,0}$.
    {Note in particular that if $\ell \not= 0$ the intersection points coming from the Dehn twist are not connected to any $\hat{c}^{\pm}_i$ or $\check{c}_i^\pm$ by 
    pseudoholomorphic disks.}
    \begin{figure}[ht]
        \centering 
        \includegraphics[width=\textwidth]{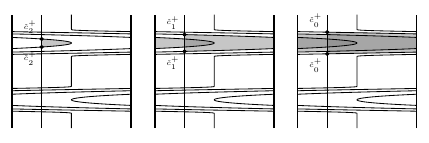}
        \caption{Some holomorphic disks with boundary on $L_0$ and $\phi_{(3)}(L_2)$. All other holomorphic disks are analogous.}\label{fig:holomorpic_disks}
    \end{figure}

    {
    Next we assume $n=3$.
    Here we can employ a symmetry argument based on that used in~\cite[Sec. 2]{frauenfelder-schlenk-2005} to reduce to the $n=1$ case.
    In the following we identify $L_1 \cong S^2 \cong \C P^1$.
    The real locus $\Reals P^1$ of $\C P^1$ is the unique great circle on $\C P^1$ passing through $1$ and $\infty$.
    In particular, it is totally geodesic, i.e. a geodesic through two points on $\Reals P^1 \subset \C P^1$ stays within $\Reals P^1$.
    By~\cite[Lemma 2.14]{frauenfelder-schlenk-2005} we can assume $x,y \in \Reals P^1$ without loss of generality.
    Now consider the map $\sigma: \C P^1 \to \C P^1$ given by $[z_0,z_1] \mapsto [\bar{z}_0:\bar{z}_1]$, i.e. induced by complex conjugation.
    Then this map is an isometry of $\C P^1$ and lifts to a symplectic involution of $T^*\C P^1$,
    which we also denote by $\sigma$.
    Note that $\Reals P^1$ is the fixed point set of $\sigma$, as is $T^*\Reals P^1 \subset T^*\C P^1$ for its lift.
    
    Recall, that a neighborhood $U$ of $L_1$ is identified with some disk cotangent bundle $D^*_rL_1 \cong D_r^* \C P^1$.
    By~\cite[Prop. 2.17]{frauenfelder-schlenk-2005} there is a generic set of $\sigma$-invariant regular almost-complex structures
    on $D_r^* \C P^1$ for any $r > 0$.
    These 
    almost-complex structures also have the property that their restriction to the fixed points of $\sigma$, i.e. to $T^*\Reals P^1$,
    is again a regular almost-complex structure.
    We fix such an almost-complex structure $J$ and denote the induced almost-complex structure on $T^*\Reals P^1$
    by $J^\sigma$.
    In order to define the Floer complex $CF(\tau_0^{2\ell}(L_0), \phi_{(k)}(L_2))$ we extend $J$ outside $U$ in an arbitrary fashion.

    Note that all intersection points $\tau_0^{2\ell}(L_0) \cap \phi_{(k)}(L_2)$
    are contained in some compact subset of $U$.
    Further, the intersection points 
    all lie in $T^* \Reals P^1$ in this local model.
    This is due to the way in which $\tau_0$ and $\phi_{(k)}$ are defined using the cogeodesic flow and the fact that $\Reals P^1 \subset \C P^1$
    is a totally geodesic submanifold.
    Thus, any $c \in {\tau_0^{2\ell}(L_0) \cap \phi_{(k)}(L_2)}$
    satisfies $\sigma(c) = c$.
    Further, note that 
    \begin{align*}
        (\tau_0^{2\ell}(L_0))^\sigma = {\tau_0^{2\ell}(L_0) \cap U \cap T^*\Reals P^1} = D_r^*\Reals P^1 \cap \tau_0^{2\ell}(T^*_x \Reals P^1),\\
        (\phi_{(k)}(L_2))^\sigma = {\phi_{(k)}(L_2)\cap U \cap T^*\Reals P^1} = D_r^*\Reals P^1 \cap \phi^{(k)}(T^*_y \Reals P^1),
    \end{align*}
    where $L^\sigma \coloneqq U \cap L \cap \sigma(L \cap U)$ is the part of the respective Lagrangians inside $U$ that is fixed by $\sigma$.
    The construction of $\tau_0$ and $\phi$ in different dimensions is compatible in such a way that we could also obtain the above 
    by constructing $\tau_0$ and $\phi$ in $T^*\Reals P^1$ instead of in the ambient manifold.
    Thus, when computing the Floer complex $CF((\tau_0^{2\ell}(L_0))^\sigma, (\phi_{(k)}(L_2))^\sigma)$ with respect to $J^\sigma$, we are working exactly in the previous $n=1$ setting.

    By the standard maximum principle argument (see~\cite[Lemma 2.6]{frauenfelder-schlenk-2005})
    finite energy $J$-holomorphic curves connecting intersection points in $\tau_0^{2\ell}(L_0) \cap \phi_{(k)}(L_2)$ must stay within $U$.
    The way we extend $J$ outside $U$ is immaterial to this argument.
    Assume $c_1,c_2 \in \tau_0^{2\ell}(L_0) \cap \phi_{(k)}(L_2)$
    and $u \in \M(c_1,c_2)$, i.e.\ that $u$ is a Floer trajectory connecting $c_1$ and $c_2$.
    If $u$ is $\sigma$-invariant then it must lie in $T^*\Reals P^1$ and 
    gives rise to a $J^\sigma$-holomorphic curve in $T^*\Reals P^1$ 
    connecting $c_1$ and $c_2$.
    If $u$ is not $\sigma$-invariant then (by the $\sigma$ invariance of $J$)
    $\sigma \circ u$ is another $J$-holomorphic curve.
    Since $c_1,c_2$ are fixed by $\sigma$, we obtain $\sigma \circ u \in \M(c_1,c_2)$.
    Thus, any non-$\sigma$-invariant curves in the Floer boundary have to come in pairs and thus cancel each other out since we work with $\field_2$-coefficients.
    It follows that $\partial$ is completely determined by the $J^\sigma$-holomorphic curves in $T^*\Reals P^1$.
    However, this setting corresponds exactly to the $n=1$ case as discussed above.
    It is easy to see that $\check{c}^{+}_{k,0}, \hat{c}^{+}_{k,0} \in \tau_0^{2\ell}(L_0) \cap \phi_{(k)}(L_2)$ 
    correspond to the generators of the same name in the $n=1$ picture.
    Note that by the index formula given above, the index difference between these two generators is $1$ in both the $n=2$ and $n=1$ case.
    The argument used to show $\partial\check{c}^{+}_{k,0} = \hat{c}^{+}_{k,0}$ for the $n=1$ case in the last section thus applies again.
    Since we ruled out any non-trivial contributions to $\partial$ by other curves, 
    this implies $\partial\check{c}^{+}_{k,0} = \hat{c}^{+}_{k,0}$ as claimed.
    }

    Last, we assume that $n > 2$. In this case we obtain the result purely for degree reasons. 
    Recall that the Floer homology $HF(\tau_0^{2\ell}(L_0),\phi_{(k)}(L_2))$ is $\field_2$ in exactly in degrees $n, {n + (n-1)}, {n + 2(n-1)}, \dots, {n + (2\ell-1)(n-1)}$ and vanishes in all other degrees. 
    By Lemma~\ref{lem:degrees_phi} we know that $\check{c}^{+}_{k,0}$ lives in degree $n+2\ell(n-1)$.
    All terms appearing in $\partial\check{c}^{+}_{k,0}$ thus have to live in degree $n+2\ell(n-1)-1$.
    Since the homology of the complex has to vanish in degree ${n+2\ell(n-1) > n+(2\ell-1)(n-1)}$, this boundary must be non-trivial.
    Using the assumption that $n-1>1$ and Lemma~\ref{lem:degrees_phi} we conclude that there is only a single generator of the correct degree, namely $\hat{c}^{+}_{k,0}$.
    Thus, $\partial\check{c}^{+}_{k,0} = \hat{c}^{+}_{k,0}$ as claimed.
\end{proof}

\bibliographystyle{alpha}
\bibliography{refs}

\end{document}